\documentclass[11pt,reqno]{amsart}
\usepackage{amsmath, amssymb, amsthm, esint, verbatim, hyperref}
\usepackage{times}
\usepackage{dsfont}

\numberwithin{equation}{section}

\hypersetup{
  pdftitle={Park City Lecture Notes on Rectifiable Reifenberg},
  pdfauthor={Aaron Naber},
  pdfsubject={},
  pdfkeywords={},
  pdfpagelayout=SinglePage,
  pdfpagemode=UseOutlines,
  colorlinks,
  bookmarksopen,
  linkcolor=[rgb]{0,0,0.7},
  urlcolor=[rgb]{0,0,0.4},
  citecolor=[rgb]{0.4,0.1,0},
}


\setlength{\textheight}{8.50in} \setlength{\textwidth}{6.5in}
\setlength{\columnsep}{0.5in} \setlength{\topmargin}{0.0in}
\setlength{\headheight}{0in} \setlength{\headsep}{0.5in}
\setlength{\parindent}{1pc}
\setlength{\oddsidemargin}{0in}  
\setlength{\evensidemargin}{0in}

\newtheorem{theorem}{Theorem}[section]
\newtheorem{ctheorem}[theorem]{Conjectural Theorem}
\newtheorem{proposition}[theorem]{Proposition}
\newtheorem{observation}[theorem]{Observation}
\newtheorem{prop}[theorem]{Proposition}
\newtheorem{lemma}[theorem]{Lemma}
\newtheorem{corollary}[theorem]{Corollary}
\theoremstyle{definition}
\newtheorem{definition}[theorem]{Definition}
\theoremstyle{definition}
\newtheorem{exercise}[theorem]{Exercise}
\theoremstyle{remark}
\newtheorem{remark}[theorem]{Remark}
\theoremstyle{remark}
\newtheorem{example}[theorem]{Example}
\theoremstyle{remark}
\newtheorem{note}[theorem]{Note}
\theoremstyle{remark}
\newtheorem{question}[theorem]{Question}
\theoremstyle{remark}
\newtheorem{conjecture}[theorem]{Conjecture}

\newcommand{\haus}{\mathcal{H}}

\newcommand{\cA}{\mathcal{A}}

\newcommand{\cC}{\mathcal{C}}

\newcommand{\cH}{\mathcal{H}}
\newcommand{\cK}{\mathcal{K}}
\newcommand{\cM}{\mathcal{M}}
\newcommand{\cN}{\mathcal{N}}
\newcommand{\cP}{\mathcal{P}}

\newcommand{\cS}{\mathcal{S}}

\newcommand{\Lip}{\mathrm{Lip}}

\newcommand{\Vol}{\text{Vol}}

\newcommand{\dN}{\mathds{N}}

\newcommand{\dQ}{\mathds{Q}}
\newcommand{\dR}{\mathds{R}}


\title{Lecture Notes on Rectifiable Reifenberg for Measures}
\author{Aaron Naber}\thanks{}
\date{\today}

\begin{document}
\begin{abstract}
These series of notes serve as an introduction to some of both the classical and modern techniques in Reifenberg theory.  At its heart, Reifenberg theory is about studying general sets or measures which can be, in one sense or another, approximated on all scales by well behaved spaces, typically just Euclidean space itself.  Such sets and measures turn out not to be arbitrary, and often times come with special structure inherited from what they are being approximated by.

We will begin by recalling and proving the standard Reifenberg theorem \cite{reif_orig}, which says that sets in Euclidean space which are well approximated by affine subspaces on all scales must be homoemorphic to balls.  These types of results have applications to studying the regular parts of solutions of nonlinear equations.  The proof given is designed to move cleanly over to more complicated scenarios introduced later.  

The rest of the lecture notes are designed to introduce and prove the Rectifiable Reifenberg Theorem \cite{ENV}, including an introduction to the relevant concepts.  The Rectifiable Reifenberg Theorem roughly says that if a measure $\mu$ is summably close on all scales to affine subspaces $L^k$, then $\mu=\mu^++\mu^k$ may be broken into pieces such that $\mu^k$ is $k$-rectifiable with uniform Hausdorff measure estimates, and $\mu^+$ has uniform bounds on its mass.  These types of results have applications to studying the singular parts of solutions of nonlinear equations.  The proof given is designed to give a baby introduction to ways of thinking in more modern PDE analysis, including an introduction to Neck regions and their Structure Theory.
\end{abstract}

\maketitle

\maketitle
\tableofcontents

\section{Discussion of Reifenberg Methods and Outline of Notes}

We begin these notes by listing a few types of Reifenberg results which exist, as well as the their primary applications.  We will not try and be overly precise at this point, and will return to details after.  Much more complete introductions are given to those topics discussed in these notes at the beginning of each lecture.\\

\begin{enumerate}
\item {\bf The classical Reifenberg \cite{reif_orig}}.
\begin{enumerate}
\item Background:  Given two sets $S_1$ and $S_2$ we define their Hausdorff distance $d_H(S_1,S_2)\equiv\inf\{\epsilon:S_1\subseteq B_{\epsilon}(S_2) $ and$ S_2\subseteq B_{\epsilon}(S_1)\}$, see \eqref{e:hausdorff_distance} for more.
\item Statement: Assume $S\subseteq B_2(0^n)$ is a set such that for each $B_{r}(x)\subseteq B_2$ there exists an affine subspace $L^k=L^k_{x,r}$ such that $d_H\big(S\cap B_r(x),L\cap B_r(x)\big)<\epsilon(n) r$.  Then $S\cap B_1$ is actually homeomorphic to a $k$-dimensional ball.
\item  Application: This is used to study the manifold structure of the regular sets of minimal surfaces.  \\
\end{enumerate}
\item {\bf Reifenberg Theorem for Metric Spaces by Cheeger-Colding \cite{ChCo_I}.}
\begin{enumerate}
\item Background:  Given two metric spaces $X_1$ and $X_2$ we say their Gromov Hausdorff\footnote{The definition given here is not quite the Gromov Hausdorff distance, but it is uniformly equivalent to it.} distance $d_{GH}(X_1,X_2)<\epsilon$ if there exists $\epsilon$-dense subsets $\{x_1^i\}\subseteq X_1$ and $\{x_2^i\}\subseteq X_2$ such that $|d(x_1^i,x_1^j)-d(x_2^i,x_2^j)|<\epsilon$.
\item Statement:  Assume a metric space $X$ is such that each ball $B_{r}(x)\subseteq B_2(p)$ is Gromov Hausdorff close to Euclidean space: $d_{GH}\big(X\cap B_r(x),B_r(0^n)\big)<\epsilon(n) r$.  Then $X\cap B_1(p)$ is actually homeomorphic to a $k$-dimensional ball.
\item  Application:  This is used to show the manifold structure of the regular sets of limits of spaces with lower Ricci curvature bounds.  See also \cite{DT_snowballs} for a more general study of such spaces.\\
\end{enumerate}
\item {\bf Uniform Rectifiability and Alhfor's regular Measures}
\begin{enumerate}
\item Background:  We say a measure $\mu\subseteq B_1(0^n)$ is Alhfor's regular if $c r^k\leq \mu(B_r(x))\leq C r^k$ for all $x\in \text{supp}\mu$ and $r\leq 2$.  We define the Jones $\beta$-numbers of any measure $\mu$ by \newline $\beta_k(x,r)^2\equiv \inf_{L^k} \int_{B_r(x)}d(x,L^k)^2\,d\mu[x]$, where the inf is taken over all affine subspaces, see Section \ref{s:rect_reif:Jones_beta} for a much more complete introduction.
\item Setup:  For an Alhfor's regular measure $\mu$ one sees that uniform rectifiability\footnote{For the sake of the introduction view $k$-rectifiable as being a $k$-manifold away from a set of measure zero.  Precise definitions and statements are given in  Section \ref{s:rect_reif:haus_content}.  Uniform rectifiablity roughly means that on all balls one can cover most of the support of $\mu$ by a single chart, see \cite{Semmes_UniformRect}.} is equivalent to the measures support being summably close on all scales to affine subspaces: i.e. $\int_0^2\beta_k(x,s)^2\frac{ds}{s}<\delta$, see Jones \cite{jones}, David-Semmes \cite{david-semmes}, and Toro \cite{toro:reifenberg}.
\item The introduction of these ideas were used by Jones to solve the traveling salesman problem, and more advanced refinements were used by David-Semmes to prove estimates on Calderon-Zygmund operators constructed from $\mu$.  Local refinements were used by Tolsa \cite{tolsa_char} and Tolsa-Azzam \cite{azzam-tolsa} to characterize when measures with upper and lower density bounds are rectifiable. \\ 
\end{enumerate}

\item {\bf Rectifiable Reifenberg Theorem by Edelen-Naber-Valtorta \cite{ENV}, Naber-Valtorta \cite{naber-valtorta:harmonic}.}
\begin{enumerate}
\item Statement:  If the support of a general measure $\mu$ is summably close on all scales to affine subspaces $L^k$, i.e. $\int_0^2\beta_k(x,s)^2\frac{ds}{s}<\Gamma$ where $\beta_k$ are the Jones $\beta$-numbers, then $\mu=\mu^++\mu^k$ may be broken into pieces such that $\mu^k$ is $k$-rectifiable with uniform Hausdorff measure estimates and $\mu^+$ has uniform bounds on the measure.
\item  Application:  This can be viewed as effective versions of the previous setup, as one concludes measures bounds instead of assuming them, and is used to study the rectifiable structure and volume bounds of singular sets of nonlinear equations.\\
\end{enumerate}
\item {\bf Reifenberg Theorem to Subset of Subsets.}
\begin{enumerate}
\item  Setup:  Consider a closed subset $\cC$ of the space of all subsets.  Assume for each ball that $S\cap B_r(x)$ is close to some element $C\in\cC$.  Then in many special cases, the set $S$ will inherit special properties itself from $\cC$.  See the work of Badger-Lewis \cite{BaLe_LocalReif}.
\item  Application:  Take $\cC$ to be the zero sets of harmonic polynomials.  Such subsets enjoy a frequency monotonicity, which weakly transfers to the set $S$ itself and builds a certain stratified structure on $S$, see the work of Badger-Engelstein-Toro \cite{BaEnTo_HarmZero}.  See also \cite{DPT_minimalcones} for an application of similar ideas to minimal cones.\\
\end{enumerate}
\item {\bf Canonical Rectifiable Reifenberg Theorems.}
\begin{enumerate}
\item Setup:  Most Reifenberg results rely on the same basic construction to build the Reifenberg maps.  In more complicated situations, as when the underlying space itself is twisted, this construction leads to additional errors and does not allow for rectifiable and finite measure control.  Instead, one builds Reifenberg maps canonically by letting the maps themselves solve an equation. 
\item  Application:  The main application of this is to study the singular sets of spaces with lower Ricci curvature bounds by Cheeger-Jiang-Naber \cite{ChJiNa}.  In that case, to approximate the singular set by a $k$-dimensional Euclidean space also requires approximating the underlying manifold.  These errors are worse and fundamentally not controllable using Reifenberg constructions.  One instead solves for harmonic mappings into $\dR^k$, and proves that on the (approximate) singular sets that these mappings are automatically Reifenberg and even rectifiable Reifenberg. 
\end{enumerate}
\end{enumerate}

These notes will focus primarily on $(1)$ and $(4)$ above.  The outline of these notes is as follows:\\

In Lecture 1 we will study and prove the classical Reifenberg Theorem.  Our proof of the classical Reifenberg Theorem is designed with the rectifiable Reifenberg in mind, so that many of the technical complications which appeared previously in the literature, see \cite{naber-valtorta:harmonic} for instance, may be avoided.  

In Lecture 2 we will give the necessary background needed so that we may end with a statement of the rectifiable Reifenberg Theorem.  This includes an introduction to the Jones $\beta$-number, which measures on a given ball how far away the support of a measure $\mu$ is to being contained in an affine subspace.  The rectifiable Reifenberg theorem roughly states that if a measure $\mu$ has appropriate integral control on its $\beta$-numbers, then it must be decomposable into pieces $\mu=\mu^++\mu^k$, where $\mu^k$ has $k$-rectifiable support with finite Hausdorff measure and $\mu^+$ is a uniformly finite measure.  The proof in these notes is different from \cite{ENV} and has been designed as a baby case of how one approaches singularity analysis in general.

In Lecture 3 we will introduce the notion of Neck regions and state the Neck Structure and Neck Decomposition Theorems.  Neck regions are roughly those regions for which a weak version of a Reifenberg type rigidity hold for $\mu$, and which the techniques of Lecture 1 will apply.  The Neck Decomposition Theorem will tell us how to decompose $B_1$, in a crucially effective way, into pieces which are either Neck regions or into regions which already have mass bounds.  After stating and discussing the Neck Structure and Neck Decomposition Theorems we will use them in Lecture 3 to prove the rectifiable Reifenberg Theorem itself.

In Lecture 4 we will prove the Neck Structure and Neck Decomposition Theorems, thus completing the proof of the rectifiable Reifenberg Theorem.  The proof of the Neck Structure Theorem will follow a very similar line of attack as our proof of the classical Reifenberg in Lecture 1, once some suitable technical complications are addressed.  The proof of the Neck Decomposition Theorem is an involved covering argument, the idea of which originates in the papers \cite{JiNa_L2},\cite{naber-valtorta:harmonic}.\\

It is worth taking a moment to mention that the proof structure of these notes are designed to be as widely applicable as possible.  Our proof of the classical Reifenberg is not just designed to be applied to the rectifiable Reifenberg, but in the process will build a variety of structure which is used in a lot of applications itself. 

Likewise our construction of Neck regions together with the Neck Structure and Neck Decomposition theorem is precisely what appears in many of the recent applications of this type of analysis.  Neck regions first appeared in the proof of the $n-4$ finiteness conjecture for manifolds with bounded Ricci curvature \cite{JiNa_L2}, and the proof of the energy identity conjecture for Yang Mills \cite{NaVa_EnId}.  Neck regions in those cases are quite a bit more subtle (as one cannot directly assume $\beta$-number control), and thus the neck structure theorems there take a lot more work.  However the neck decomposition theorems are almost verbatim.  In both cases the reifenberg context makes for an excellent test case.

\vspace{.5cm}

\section{Required Technical Background for these Notes}

These notes are designed to be almost entirely self-contained.  It goes without saying that the more background one has in some basic geometric measure theory the more comfortable you may feel, but strictly speaking this is not needed as we will build by hand almost all of the structure we require.  The following is meant to list some theorems and basic technical tools which will get used frequently.  The reader is free, and indeed encouraged, to skip this section for now and come back when appropriate, as many of the technical constructions will be easier to follow when there is a context.  The exercises of this section, and indeed these notes as a whole, are meant to be clear to a reader familiar with the ideas involved.  If any tricks are needed, these are basically always stated in a hint, as the goal of the exercises is to familiarize the reader with the basic technical building blocks.

\subsection{Implicit Function Theorem}

The implicit function theorem is typically stated in an ineffective manner, however since we will care about the estimates let us state for convenience the effective result (whose proof is verbatim the implicit function theorem itself):

\begin{theorem}[Implicit Function Theorem]\label{t:background:implicit_function} Let $f:B_2(0^n)\times B_2(0^m)\to \dR^m$ be a $C^1$ function and assume $f(0,0)=0$ and $|\partial_x f(x,y)|, |\partial_y f(x,y) - Id|, |\partial^2f|<\delta$.  Then there exists $g:B_1(0^n)\to \dR^m$ such that $|g|, |\partial_i g|, |\partial_i\partial_j g|<C(n)\delta$ with $f(x,g(x))=0$ for all $x\in B_1(0^n)$.
\end{theorem}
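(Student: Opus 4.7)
The plan is to prove this via a Banach fixed point argument applied uniformly in the parameter $x$, which is the standard route to the effective Implicit Function Theorem. For each fixed $x\in B_1(0^n)$, define the map $T_x:B_r(0^m)\to \dR^m$ by $T_x(y)=y-f(x,y)$. A fixed point of $T_x$ is precisely a $y$ with $f(x,y)=0$, so producing $g(x)$ amounts to producing such a fixed point, uniquely, in a controlled ball depending on $\delta$.

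First I would verify that $T_x$ is a contraction. Since $\partial_y T_x = \mathrm{Id}-\partial_y f$ and by hypothesis $|\partial_y f - \mathrm{Id}|<\delta$, one has $|\partial_y T_x|<\delta$ throughout $B_2(0^m)$, so $T_x$ is $\delta$-Lipschitz in $y$ (for $\delta<\tfrac 12$ say). Next, I would check $T_x$ maps some small ball to itself. From $f(0,0)=0$ and $|\partial_x f|<\delta$ one gets $|T_x(0)|=|f(x,0)|\le \delta|x|\le \delta$, hence $|T_x(y)|\le \delta + \delta r$ for $y\in B_r(0^m)$. Choosing $r:=2\delta$ yields $T_x(B_r)\subseteq B_r$, and the Banach fixed point theorem produces a unique $g(x)\in B_{2\delta}(0^m)$ with $f(x,g(x))=0$, giving the $C^0$ bound $|g|\le 2\delta$.

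For the derivative bounds I would use standard implicit differentiation, which is justified since $\partial_y f$ is invertible (its distance from $\mathrm{Id}$ is less than $\delta<1/2$, so by Neumann series $|(\partial_y f)^{-1}|\le 2$). Differentiating $f(x,g(x))=0$ in $x_i$ gives
\[
  \partial_i g(x) = -(\partial_y f)^{-1}\bigl(x,g(x)\bigr)\,\partial_i f\bigl(x,g(x)\bigr),
\]
so $|\partial_i g|\le 2\delta$. Continuity of $g$ (hence of the right-hand side) combined with the usual difference-quotient argument shows $g$ is genuinely $C^1$. Differentiating once more, using the product rule, the chain rule, the identity $\partial(A^{-1})=-A^{-1}(\partial A)A^{-1}$, and the assumed bound $|\partial^2 f|<\delta$, each term in $\partial_i\partial_j g$ is a product of factors bounded by $1$, $2$, or $\delta$, yielding $|\partial_i\partial_j g|\le C(n)\delta$. (Strictly speaking one first mollifies or uses difference quotients to justify the second derivative exists; as the bound is uniform in the smoothing parameter, one recovers $|\partial^2 g|\le C(n)\delta$ in the limit.)

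There is no real obstacle here — the only thing to watch is that the domain choice $r=2\delta$ is consistent with the hypotheses, which requires $\delta$ small enough that $B_{2\delta}(0^m)\subseteq B_2(0^m)$ and that the contraction/self-map estimates close. This is automatic provided $\delta$ is absolutely small, and the final constants are absorbed into $C(n)$. The entire argument is literally the standard proof of the implicit function theorem, carried out quantitatively; the effective estimates appear because every qualitative step (invertibility of $\partial_y f$, size of $g$, size of its derivatives) has been replaced by an explicit bound from the hypotheses.
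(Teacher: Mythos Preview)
Your proposal is correct and is precisely the standard contraction-mapping proof of the implicit function theorem carried out with explicit constants, which is exactly what the paper indicates: the paper does not write out a proof of this statement at all, merely remarking that the effective version follows ``verbatim'' from the usual implicit function theorem. Your argument fills in those details accurately.
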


The above not only tells us that the zero set of $f$ is a graphical manifold near $(0,0)$, but also gives good estimates on the structure of that manifold.

\subsection{Elementary Measure Theory}

You will need to understand the definition of a Borel measure.  Some knowledge of the Hausdorff measure is not required, as we will review this in Section \ref{s:rect_reif:haus_content}, but it would be very helpful.

\subsection{Vitali Covering Lemma}\label{sss:background:covering}

We may not directly quote the Vitali Covering lemma, however its proof will be implicit in a lot of constructions.  Let us state the classical result:

\begin{lemma}[Vitali Covering Lemma]\label{l:vitali} Let $\{B_{r_\alpha}(x_\alpha)\}$ be any collection of balls with $r_\alpha\leq A<\infty$.  Then there exists a countable disjoint subcollection $\{B_{r_i}(x_i)\}$ such that $\bigcup B_{r_\alpha}(x_\alpha)\subseteq \bigcup B_{5r_i}(x_i)$.	
\end{lemma}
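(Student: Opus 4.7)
The plan is to run a greedy selection that chooses disjoint balls of approximately the largest available radius first, then show by maximality that every original ball is within a factor of five of something selected.

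First I would partition the original collection by radius into dyadic ``generations'' $\cF_k=\{B_{r_\alpha}(x_\alpha):A\,2^{-k}<r_\alpha\le A\,2^{-k+1}\}$ for $k=1,2,\ldots$, so that every ball in the collection lies in exactly one $\cF_k$ and within a single generation all radii agree up to a factor of two. Then I would build the subcollection $\cG=\bigsqcup_k \cG_k$ inductively: having chosen $\cG_1,\ldots,\cG_{k-1}$, let $\cG_k$ be a maximal (with respect to inclusion) subfamily of $\cF_k$ whose elements are pairwise disjoint and also disjoint from every ball already placed in $\cG_1\cup\cdots\cup\cG_{k-1}$. Existence of such a maximal family is a standard Zorn's-lemma application. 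Countability is free in $\dR^n$: any family of pairwise disjoint open balls is countable because $\dR^n$ is separable, so each ball contains a rational point and these are all distinct.

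The covering property is where the factor $5$ is produced. Take any $B_{r_\alpha}(x_\alpha)\in\cF_k$. By maximality of $\cG_k$, either $B_{r_\alpha}(x_\alpha)\in\cG_k$, in which case there is nothing to show, or else it must meet some previously selected ball $B_{r_i}(x_i)\in\cG_j$ for some $j\le k$ (otherwise we could have adjoined it to $\cG_k$). By the dyadic bookkeeping $r_i>A\,2^{-j}\ge A\,2^{-k}$, and $r_\alpha\le A\,2^{-k+1}$, which together give $r_\alpha<2 r_i$. A direct triangle inequality against any point $y\in B_{r_\alpha}(x_\alpha)\cap B_{r_i}(x_i)$ then shows, for any $z\in B_{r_\alpha}(x_\alpha)$,
\[
|z-x_i|\le |z-x_\alpha|+|x_\alpha-y|+|y-x_i|<2r_\alpha+r_i<5 r_i,
\]
so $B_{r_\alpha}(x_\alpha)\subseteq B_{5r_i}(x_i)$ as required.

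I expect the main obstacle to be a conceptual rather than technical one: making sure the greedy selection is set up so that every ball is considered against balls of radius at least roughly half its own, which is precisely what the dyadic partition guarantees. If instead one tried to select in order of strictly decreasing radius without the dyadic grouping, one would run into the issue that a supremum of radii need not be attained, and the resulting maximal family might fail to capture the required $r_i\gtrsim r_\alpha$. The only other subtle point is the appeal to Zorn's lemma (or equivalently transfinite induction) inside each generation, but the countability is then automatic from separability of $\dR^n$, so no further care is needed.
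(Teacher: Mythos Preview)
Your proof is correct and is essentially the standard argument for the Vitali covering lemma. The dyadic partition into generations, the greedy maximal selection within each generation, and the triangle-inequality computation yielding the factor $5$ are all set up properly; in particular your estimate $r_\alpha<2r_i$ from $r_i>A\,2^{-j}\ge A\,2^{-k}$ and $r_\alpha\le A\,2^{-k+1}$ is the key inequality, and the countability via separability is fine.

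Note that the paper itself does not supply a proof of Lemma~\ref{l:vitali}: it is stated in the technical background section as a classical result, with the remark that its proof ``will be implicit in a lot of constructions.'' So there is no paper proof to compare against, but your argument would serve perfectly well as the omitted verification.
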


In general, the reader (and indeed any aspiring analyst) needs to get very comfortable with ways of covering sets by balls in controlled manners.  Let us give a handful of exercises which will help in this direction.

In practice one proves effective content estimates on a well behaved collection of balls by taking the collection, identifying it with a collection of balls in Euclidean space, and then estimating there.  The following exercise teaches us the minimal structure we need on these balls in Euclidean space in order to conclude content estimates: 

\begin{exercise}\label{exer:covering:1}
Let $\{B_{r_i}(x_i)\}\subseteq B_2(0^k)$ be a collection of balls in Euclidean Space $\dR^k$.  Show the following:
\begin{enumerate}
\item[(a)] If $\{B_{r_i}(x_i)\}$ are disjoint then $\sum r_i^k\leq C(k)$.
\item[(b)] If $B_1(0^k)\subseteq \bigcup B_{r_i}(x_i)$ then $C(k)\leq \sum r_i^k$.	
\end{enumerate}
\end{exercise}
\vspace{.2cm}

We will often need to build controlled covers of regions by balls of some predetermined size.  The next exercise is a gentle introduction into how one takes a covering and refines from it a 'well behaved' covering:

\begin{exercise}\label{exer:covering:2}
For $S\subseteq B_1(0^n)$ let $r_x:S\to \dR^+$ with $r_x>r_0>0$ be an assigned radius function.  Then 
\begin{enumerate}
\item[(a)] Show one can choose a maximal subcollection $\{B_{r_i}(x_i)\}\subseteq \{B_{r_x}(x)\}$ such that $\{B_{r_i/5}(x_i)\}$ are disjoint.  Maximal means if $y\in B_1$ then $B_{r_y/5}(y)\cap B_{r_i/5}(x_i)\neq \emptyset$ for some $i$.
\item[(b)] Show $S\subseteq \bigcup B_{r_i}(x_i)$.
\item[(c)] Argue as in the last Exercise to see $\#\{B_{r_i}(x_i)\}\leq N(n,r_0)$.
\end{enumerate}
\end{exercise}
\vspace{.2cm}

Our last exercise is our most technical, however constructions of this type are used almost continuously.  The idea is similar to the last exercise, but we drop our assumed lower bound on the radius and replaced it instead with lipschitz control on the radius function.  The next exercise can be used to build well behaved partitions of unity, and we will later use a very similar construction to do just that:

\begin{exercise}\label{exer:covering:3}
Let $r_x:B_1(0^n)\to \dR$ be a nonnegative radius function with $|\nabla r_x|\leq \tau^{-1}$ for some $\tau>0$.   Let $\cA_0\equiv \{x:r_x=0\}$ and let $\cA_+\subseteq \{r_x>0\}$ a maximal subset such that $\{B_{10^{-3}\tau r_x}(x)\}$ are disjoint.  
\begin{enumerate}
\item[(a)] Show $B_1\subseteq \cA_0\cup \bigcup_{\cA_+} B_{10^{-2}\tau r_x}(x)$.
\item[(b)] Show if $B_{10^{-1}\tau r_x}(x)\cap B_{10^{-1}\tau r_y}(y)\neq \emptyset$ then $10^{-1}r_y\leq r_x\leq 10 r_x$.
\item[(c)] Show for each $y\in B_1$ that $\#\{x\in \cA_+: B_{10^{-1}\tau r_x}(x)\cap B_{10^{-1}\tau r_y}(y)\neq \emptyset\}\leq C(n)$.
\end{enumerate}
\end{exercise}

\vspace{.3cm}

\subsection{Submanifolds of Euclidean Space}\label{ss:background:submanifolds}

As all submanifolds will be built explicitly, one may hobble through these notes without any real previous knowledge of submanifolds of Euclidean space, however the reader will find the learning curve shortened somewhat is time is spent on this first.  Let us very quickly recall a few definitions, and then present some exercises which are relevant to technical constructions which will appear:

\begin{definition}[Submanifolds]
Recall the following:
\begin{enumerate}
\item We call a differentiable map $f:U\subseteq \dR^k\to \dR^n$ an immersion if for each $x\in U$ we have that the linear map $d_xf:\dR^k\to \dR^n$ given by $d_xf[v]\equiv \partial_i f(x) v^i$ is injective.
\item We say a subset $S\subseteq \dR^n$ is a submanifold if for all $y\in S$ $\exists$ a neighborhood $y\in V$ and an immersion $f:U\subseteq \dR^k\to \dR^n$ such that $S\cap V = f(U)$.  We call the pair $(U,f)$ a chart.
\end{enumerate}
\end{definition}

In practice, the submanifolds of these notes will always come from one chart, as we will only be interested in local constructions.  As defined, one might also call $S$ above an embedded submanifold, which is again perfectly acceptable for the constructions of these notes.  We will be interested in tangent spaces of submanifolds:

\begin{definition}[Tangent Spaces]
Let $S\subseteq \dR^n$ be a submanifold and $f:U\subseteq \dR^k\to \dR^n$ a chart with $f(x)=y$, then we call the tangent space to be the affine subspace $T_yS\equiv \{w\in\dR^n: w=y+d_xf[v] \text{ for }v\in \dR^k\}$.	
\end{definition}

Let us give a few useful exercises on tangent spaces:

\begin{exercise}
Show $T_yS$ is independent of the chart.  Namely, if $f':U'\subseteq \dR^k\to \dR^n$ is another chart of $S$ with $f'(x')=y$, then the tangent space defined from $f'$ is the same as that defined from $f$.
\end{exercise}

If $w\in T_yS$ then we define the norm of $w$ by $|w|\equiv |w-y|$.  That is, if we view $T_yS$ as a linear subspace by moving $y$ to the origin and then we define the norm there.

\begin{exercise}
Show that if $y_i\in S\to y\in S$ with $w=y+\lim \frac{y_i-y}{|y_i-y|}$, then $w\in T_yS$ is a unit tangent vector.
\end{exercise}

Let us now consider a few simple examples:

\begin{example}
Let $f:\dR^k\to \dR^n$ be a linear isometric immersion, then the submanifold $f(\dR^k)=S=L$ is an affine subspace. 	In this case, the tangent space $T_xS=L$ is also $L$ for each $x\in S$.
\end{example}

\begin{example}
Let $L\subseteq \dR^n$ be an affine subspace and let $\hat L^\perp$ be the perpendicular subspace\footnote{As a point of notation we will use $L$ to represent affine subspaces and we will put hat's $\hat L$ to represent linear subspaces, i.e. $\hat L$ goes through the origin while $L$ need not.}.  Let $f:L\to \hat L^\perp$ be a smooth mapping, then the graphical map $g:L\to \dR^n$ given by $g(x)=x+f(x)$ is a chart and gives rise to a graphical submanifold $S=g(L)=\text{Graph}(L)$.\\
\end{example}

\subsubsection{}
{\bf Regularity of Submanifolds:\\}

Although the submanifolds of these notes will come from a single chart, it turns out that it may be much more convenient when working locally to build a new chart tailored to the local structure of the submanifold.  This is related to considering notions of regularity for submanifolds.  Locally, every submanifold looks like the last example and can be written as a graph over an affine subspace, thus let us formalize this into a notion of regularity:

\begin{definition}\label{d:graphical_regularity}
We say $S\subseteq \dR^n$ is $(\delta,r)$-graphical if for each $x\in S$ there exists an affine subspace $L_x\subseteq \dR^n$ and $f_x:L_x\to \hat L_x^\perp$ with $r^{-1}|f_x|, |\partial_i f_x|, r|\partial_i\partial_j f_x|\leq \delta$ such that $\text{Graph}(L_x)\cap B_r(x) = S\cap B_r(x)$. 
\end{definition}
\begin{remark}
The factors of $r$ are scale invariant factors.  Thus if we rescale and translate $\dR^n$ so that $B_r(x)\to B_1(0)$	 then $S$ becomes $(\delta,1)$-graphical.
\end{remark}
\begin{remark}
We can also let $\delta_x$ and $r_x$ be functions on $S$ and say $S$ is $(\delta_x,r_x)$-graphical.	
\end{remark}

Let us present some technical exercises which will build an intuition.  If the reader is not familiar with the notion of Hausdorff distance between sets then we refer them to the definition given in \eqref{e:hausdorff_distance}:

\begin{exercise}\label{exer:graphical_subspace:1}
	Let $S$ be $(\delta,r)$-graphical with $f_x:L_x\to \hat L_x^\perp$ a graphing function\footnote{As a point of notation we will use $L$ to represent affine subspaces and we will put hat's $\hat L$ to represent linear subspaces, i.e. $\hat L$ goes through the origin while $L$ need not.}. Then show that $d_H(S\cap B_r(x),L_x\cap B_r(x))<\delta r$.  
\end{exercise}

\begin{exercise}\label{exer:graphical_subspace:2}
	Let $S$ be $(\delta,r)$-graphical with $f_x:L_x\to \hat L_x^\perp$ a graphing function.  Assume $L'_x$ is an affine subspace such that $d_H(L_x\cap B_r(x),L'_x\cap B_r(x))<\delta r$.  Then show there exists a graphing function $f'_x:L'_x\to (\hat L'_x)^\perp$ with $r^{-1}|f'_x|, |\partial_i f'_x|, r|\partial_i\partial_j f'_x|\leq C(n)A\delta$ such that $\text{Graph}(L'_x)\cap B_r(x) = S\cap B_r(x)$.   
	
	Hint:  Let $f'_x$ be the composition of the projection map from $L'_x$ to $L_x$ and $f_x$.
\end{exercise}

The above exercise tells us we have some flexibility on which affine subspaces we pick.  The next exercise tells us that the tangent spaces of graphical submanifolds are well approximated:

\begin{exercise}\label{exer:graphical_subspace:2}
	Let $S$ be $(\delta,r)$-graphical with $f_x:L_x\to \hat L_x^\perp$ a graphing function.  Show for each $y\in S\cap B_r(x)$ that $d_H(T_yS\cap B_r(x), L_x\cap B_r(x))<C(n)\delta r$.
\end{exercise}

\subsubsection{}
{\bf Projections to Submanifolds:\\}

Finally, let us discuss a little the natural projection map associated to each submanifold.  Composing these will form a key technical tool in the construction of the Reifenberg maps later in these notes.

To begin, given an affine subspace $L$ let $\pi_L:\dR^n\to L$ be the projection map to $L$, and let $\hat \pi_{L}:\dR^n\to \hat L$ be the projection map to the associated linear subspace.  We wish to build projection maps to general submanifolds:

\begin{theorem}\label{t:projection_map}
Let $S\subseteq \dR^n$ be a $(\delta,r)$-graphical submanifold.  Then the closest point projection mapping $\pi_S:B_r(S)\to S\subseteq \dR^n$ defined by $\pi_S(x) = \arg\min_{y\in S}\frac{1}{2}|x-y|^2$ is well defined and satisfies
\begin{enumerate}
\item $\pi_S\cap S = Id$, 
\item $|\partial_i\pi_S(y)-\hat\pi_{L_x}|<C(n)\delta$ where $y\in B_r(x)$ with $x\in S$.
\item $r|\partial_i\partial_j \pi_S|<C(n)\delta$.
\end{enumerate}
\end{theorem}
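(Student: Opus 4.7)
The plan is to characterize $\pi_S(y_0)$ as the unique solution of the normality condition $y_0 - \pi_S(y_0) \in (T_{\pi_S(y_0)}S)^\perp$, and to extract smoothness together with derivative estimates by applying the effective Implicit Function Theorem (Theorem~\ref{t:background:implicit_function}) to that condition. Concretely, fix $y_0\in B_r(S)$ and pick $x_0\in S$ with $|y_0-x_0|<r$. The $(\delta,r)$-graphical hypothesis gives a chart $f=f_{x_0}:L_{x_0}\to \hat L_{x_0}^\perp$ with $r^{-1}|f|,|\partial f|,r|\partial^2 f|\leq \delta$. Picking an orthonormal frame $e_1,\dots,e_k$ of $\hat L_{x_0}$ and parametrizing $S\cap B_r(x_0)$ by $p\mapsto y(p):=p+f(p)$, the first-order optimality condition for minimizing $\tfrac12|y_0-y(p)|^2$ reads
\[
F_i(y_0,p):=\langle y_0-p-f(p),\, e_i+\partial_i f(p)\rangle=0,\qquad i=1,\dots,k.
\]

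A short computation gives $\partial_{p^j}F_i = -(\delta_{ij}+\langle\partial_i f,\partial_j f\rangle)+\langle y_0-p-f(p),\partial_i\partial_j f(p)\rangle = -I + O(\delta)$ in operator norm throughout the relevant range, while $\partial_{y_0}F_i = e_i+\partial_i f$, which after rotation is the linear projection $\hat\pi_{\hat L_{x_0}}$ up to an $O(\delta)$ error. After the scale-invariant rescaling $B_r\to B_1$, Theorem~\ref{t:background:implicit_function} produces a unique $C^1$ solution $p(y_0)$, and one defines $\pi_S(y_0):=y(p(y_0))$. Property (1) is then immediate because any $y_0\in S$ forces $p(y_0)$ to be its own graphing parameter. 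Property (2) follows from the chain rule $\partial_{y_0}\pi_S=(e_j+\partial_j f)\,\partial_{y_0}p^j$, which to leading order equals $\hat\pi_{\hat L_{x_0}}+O(\delta)$; one may replace $L_{x_0}$ by any $L_x$ with $y_0\in B_r(x)$ at the cost of a further $C(n)\delta$, since $L_x$ and $L_{x_0}$ both Hausdorff-approximate $S$ on a common ball and hence their linearizations are $O(\delta)$-close as projections. Property (3) is obtained by differentiating $F(y_0,p(y_0))\equiv 0$ a second time; the worst new term is $\partial^2 f\cdot(\partial p)^2$, of size $\delta/r$, which gives the asserted scale-invariant bound.

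The main obstacle, and essentially the only nontrivial point, is to verify that the local critical point produced by the IFT is in fact the \emph{global} argmin over $S$, so that $\pi_S$ is well-defined on all of $B_r(S)$. Here one uses that the graphicality hypothesis holds at every point of $S$, not only at $x_0$: any competitor $y'\in S$ satisfying $|y_0-y'|\le |y_0-y(p(y_0))|<r$ lies inside $B_{2r}(x_0)$, and by covering this region with graphical charts, each of which yields the Hessian identity $\partial_p^2\tfrac12|y_0-y(p)|^2 = I+O(\delta)>0$, one sees that $\tfrac12|y_0-y|^2$ is strictly convex along $S$ in every chart and hence admits at most one critical point per chart. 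A standard patching argument using the local uniqueness then forces these chart-wise critical points to coincide on overlaps, producing a single global critical point which must be the minimizer; this simultaneously establishes well-definedness and closes the proof.
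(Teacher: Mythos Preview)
Your proposal is correct and follows essentially the same approach as the paper: both set up the first-order optimality condition for $\tfrac12|y_0-y(p)|^2$ in a graphical chart and apply the effective Implicit Function Theorem to it, then read off the derivative estimates on $\pi_S$ from the chain rule. Your function $F_i$ is exactly the paper's $G$ written componentwise, and your computation $\partial_p F=-I+O(\delta)$ matches the paper's $|\partial_{y'}G-Id|<C(n)\delta$.

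One small remark on the global uniqueness step: your patching argument is slightly more elaborate than needed and, as written, a little loose (two candidate minimizers could be up to $2r$ apart and hence not in a common chart). The paper handles this more simply by working on $B_{r/2}(x)$: if $|y_0-x|<r/2$ then any competitor $y'$ with $|y_0-y'|\le|y_0-x|$ satisfies $|y'-x|<r$, so every candidate minimizer already lies in the \emph{single} chart at $x$, where strict convexity of $p\mapsto\tfrac12|y_0-y(p)|^2$ (Hessian $I+O(\delta)$) gives uniqueness directly. This yields the result on $B_{r/2}(S)$, which is all that is ever used downstream; the factor of two is absorbed into constants.
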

\begin{proof}
We at least outline the proof.  There are actually several approaches to this, including more geometric ones which I personally prefer, however we will outline a proof using the implicit function theorem so we can stick with ideas more consistent with these notes. Thus let $f:L_x\to \hat L_x^\perp$ be a $\delta$-graphing function for $S$ on $B_r(x)$.  Let us consider on $B_{r}(x)$ the function $G: L_x\times \hat L^\perp_x\times L_x \to L_x $ given by \footnote{The partial derivative uses the $'$ notation in order to signify that we are taking the partial derivative in the $y'$ direction.}
\begin{align}
\langle G(y,z,y'), v'\rangle = \frac{1}{2}\partial_{v'}\big(|y-y'|^2+|z-f(y')|^2\big) = \partial_{v'} \frac{1}{2}|(y,z)-(y',f(y'))|^2\, ,	
\end{align}
so that $G(y,z,y')$ is the horizontal derivative of the square distance from $(y,z)\in B_r(x)$ to $(y',f(y'))\in S$.  In particular, if $\pi_S(y,z)=(y',f(y'))$ then we have $G(y,z,y')=0$, and thus for each $(y,z)$ there exists $y'$ such that $G(y,z,y')=0$.  Using our estimates on $f$ let us also see that:
\begin{align}
|\partial_{y} G(y,z,y')+Id|\, ,\;\;|\partial_{z} G(y,z,y')|\, ,\;\;|\partial_{y'} G(y,z,y') - Id| < C(n)\delta\, .	
\end{align}
 We can now use the implicit function theorem \ref{t:background:implicit_function} in order to find $g:B_{r/2}(x)\to L_x$ such that $G(y,z,g(y,z))=0$ with
\begin{align}
	&r^{-1}|g(y,z)-y|\,  ,\;\;|\partial_y g-Id|\, ,\;\;|\partial_z g|\leq C(n)\delta\, ,\;\; r|\partial\partial g|\leq C(n)\delta\, .
\end{align}
For each $(y,z)\in B_{r/2}(x)$ we then have, by the above estimates, that $y'=g(y,z)$ is the unique point such that $G(y,z,g(y,z))=0$, and thus we must have $\pi_S(y,z) = (g(y,z),f(g(y,z)))$.  The estimates on $g$ and $f$ therefore prove the desired estimates on $\pi_S$.\\
\end{proof}

Let us end now with an intuitive exercise:

\begin{exercise}\label{exer:class_reif:approx_subman:3}
Show using $(2)$ that for $x\in B_{r}(S)$ we have $|\pi_r(x)-x|\leq (1+C(n)\delta)d(x,S_r)$.
\end{exercise}

\newpage
\part*{Lecture 1: Classical Reifenberg}

The classical Reifenberg theorem describes sets which may be approximated on all points and scales by affine subspaces.  The basic claim is that such sets must in fact be homeomorphic to Euclidean balls, and thus are quite rigid.  To describe this in more detail let us recall the Hausdorff distance between sets:
\begin{definition}\label{d:hausdorff}
	Let $A,B\subseteq \dR^n$ be subsets, then we define their Hausdroff distance
\begin{align}\label{e:hausdorff_distance}
d_H(A,B) \equiv \inf\{r>0:A\subseteq B_r(B) \text{ and }B\subseteq B_r(A)\}\, .	
\end{align}
\end{definition}

It is worth observing that the Hausdorff distance is a complete metric on closed subsets \cite{Federer_Book}.  

\begin{exercise}
Check the following:
\begin{enumerate}
	\item Let $S\subseteq B_1(0^n)$ be any closed subset, and let $\{x_i\}\in S$ be an $\delta$-dense subset.  Then $d_H(S,\{x_i\})\leq \delta$.
	\item Let $S\subseteq B_1(0^n)\times\{0\}\subseteq B_1(0^n)\times \dR^2$ be any closed subset, and let $S_\delta \equiv S\times S^1(\delta)$, where $S^1(\delta)\subseteq \dR^2$ is the circle of radius $\delta$.  Then $d_H(S,S_\delta)\leq \delta$.
\end{enumerate}
\end{exercise}

By letting $\delta\to 0$ in the above examples one sees that Hausdorff distance certainly does not preserve either upper or lower dimensional bounds in any manner.\\

Let us now define carefully what it means for a set to satisfy the Reifenberg condition:
\begin{definition}
Let $S\subseteq B_2\subseteq \dR^n$ be a closed set:
\begin{enumerate}
\item We define the $L^\infty$ Jones $\beta$-numbers $\beta^\infty_k(x,r)\equiv r^{-1}\inf_{L^k} d_H(S\cap B_r(x),L\cap B_r(x))$, where the inf is taken over all $k$-dimensional affine subspaces $L^k$.
\item $S$ satisfies the $\delta$-Reifenberg condition if for all $x\in S$ with $B_r(x)\subseteq B_2$ we have $\beta^\infty(x,r)<\delta$.	
\end{enumerate}
\end{definition}

It turns out the Reifenberg situation can occur naturally, in one guise or another, in a variety of situations.  Reifenberg was the first to prove that this forces a very strict rigidity on $S$.  In particular, $S$ must be a topological manifold:

\begin{theorem}[Reifenberg's Theorem]\label{t:classical_reif}
	Let $S\subseteq B_2\subseteq \dR^n$ satisfy the $\delta$-Reifenberg condition.  Then for every $0<\alpha<1$ if $\delta<\delta(n,\alpha)$ then $\exists$ $\phi:S\cap B_1(0^n)\to B_1(0^k)$ which is a $C^{\alpha}$-bih\"older map.  Precisely:
	\begin{align}
		\frac{1}{2}|x-y|^{1+\alpha} < |\phi(x)-\phi(y)| < 2|x-y|^{1-\alpha}\, .
	\end{align}
\end{theorem}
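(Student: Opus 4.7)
The plan is to build an approximating sequence of $k$-dimensional submanifolds $S_0, S_1, S_2, \ldots$ that converge in Hausdorff distance to $S \cap B_1$, together with maps $F_i : S_i \to S_{i+1}$ that are small perturbations of the identity, so that the bi-H\"older map $\phi$ arises as the limit of the compositions $F_i \circ \cdots \circ F_0$ restricted to $S_0 \cap B_1 \simeq B_1(0^k)$. At dyadic scales $r_i = 2^{-i}$ I fix a maximal $(r_i/5)$-separated net $\{x_\alpha^i\} \subseteq S$ via the covering exercise, and by the $\delta$-Reifenberg hypothesis I pick affine $k$-subspaces $L_\alpha^i$ with $d_H(S \cap B_{r_i}(x_\alpha^i), L_\alpha^i \cap B_{r_i}(x_\alpha^i)) < \delta r_i$. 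The base $S_0$ is taken to be a single affine $k$-subspace approximating $S$ on $B_2$.

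The inductive step defines $S_{i+1} = F_i(S_i)$ using a partition of unity $\{\psi_\alpha^{i+1}\}$ subordinate to $\{B_{2r_{i+1}}(x_\alpha^{i+1})\}$, built via the covering exercises, by setting
\[
F_i(y) \;=\; y \,+\, \sum_\alpha \psi_\alpha^{i+1}(y)\,\bigl(\pi_{L_\alpha^{i+1}}(y) - y\bigr).
\]
I maintain three inductive invariants: (i) $S_i$ is $(C\delta, r_i)$-graphical in the sense of the regularity definition; (ii) whenever the balls $B_{2r_{i+1}}(x_\alpha^{i+1})$ and $B_{2r_{i+1}}(x_\beta^{i+1})$ overlap, the subspaces satisfy $d_H(L_\alpha^{i+1} \cap B_{4r_{i+1}}, L_\beta^{i+1} \cap B_{4r_{i+1}}) < C\delta r_{i+1}$; and (iii) $d_H(S \cap B_{3/2}, S_i \cap B_{3/2}) < C\delta r_i$. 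Invariant (ii) is immediate because both subspaces must approximate the same portion of $S$ on their overlap. Combined with the flexibility exercise on changing graphing subspaces, it shows that the summands in $F_i$ agree up to $C\delta r_{i+1}$, so $F_i$ is a $(C\delta)$-small $C^2$-perturbation of the identity at scale $r_{i+1}$; invariant (i) then propagates since a small $C^2$-perturbation of a graphical submanifold is again graphical at the finer scale, and (iii) follows directly from how far $F_i$ moves points on $S_i$.

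For the bi-H\"older estimate, given $x, y \in S \cap B_1$ together with their preimages on the $S_0$ side, I stop the iteration at the scale $i_0$ with $r_{i_0} \approx |x - y|$. The graphical control on $S_i$ and the $C^1$-smallness of $F_i$ ensure that for $i \leq i_0$ each $F_i$ distorts the distance between well-separated points on $S_i$ by a factor in $(1 - C\delta, 1 + C\delta)$, so after $i_0 \simeq -\log_2 |x-y|$ steps the total distortion lies between $|x-y|^{C\delta}$ and $|x-y|^{-C\delta}$ for a new $C = C(n)$. The tail contribution from $i > i_0$ is bounded by $\sum_{i > i_0} C\delta r_i \lesssim \delta |x-y|$, which is absorbed into the main term. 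Choosing $\delta = \delta(n,\alpha)$ so that $C\delta \leq \alpha$ then yields the stated two-sided H\"older bound.

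The main obstacle is propagating invariant (i). The partition-of-unity deformation $F_i$ has derivatives of order $r_{i+1}^{-1}$ and $r_{i+1}^{-2}$ coming from the cutoffs $\psi_\alpha^{i+1}$, so without Hausdorff-closeness of adjacent subspaces the cross-terms $(\pi_{L_\alpha^{i+1}}(y) - y) - (\pi_{L_\beta^{i+1}}(y) - y)$ on overlapping supports would produce $O(1)$ curvature. With invariant (ii) in hand, on any ball $B_{r_{i+1}}(x_\alpha^{i+1})$ the sum in $F_i$ can be rewritten, after changing the reference subspace uniformly to $L_\alpha^{i+1}$ via the flexibility exercise, as a $C\delta r_{i+1}$-bounded $C^2$-perturbation of $\pi_{L_\alpha^{i+1}}$; the projection-map theorem then shows that $F_i(S_i)$ is again $(C\delta, r_{i+1})$-graphical with the \emph{same} constant $C$ as $S_i$. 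Once this uniform-in-$i$ regularity is secured, convergence of the $S_i$ and the bi-H\"older compounding estimate follow from the bookkeeping sketched above.
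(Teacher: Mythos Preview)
Your proposal is correct and follows the \emph{classical} Reifenberg construction (as in Reifenberg's original paper, Simon's notes, or David--Toro): build $S_{i+1}$ from $S_i$ by pushing via a partition-of-unity weighted projection onto the local best planes, and read off the bi-H\"older bound by compounding the $(1\pm C\delta)$ distortion factors up to the stopping scale $i_0 \approx -\log_2|x-y|$, then absorbing the geometric tail. The key structural points---closeness of overlapping best planes, the cancellation that prevents the $r_{i+1}^{-1}$ derivatives of the cutoffs from blowing up the graphical constant, and the ``reset'' of the graphical estimate coming from the fact that $F_i$ is essentially a projection onto the new local plane---are all correctly identified.

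The paper deliberately takes a different route. Rather than building $S_{i+1}$ from $S_i$ by gluing, it constructs each approximating submanifold $S_r$ \emph{independently} as the zero set of a smooth Morse--Bott ``approximate distance function'' $\Phi_r(y) = \tfrac{1}{2}\,d(y,L_y)^2$, where $L_y$ is a smoothly varying choice of best plane produced by a Subspace Selection Lemma. The maps then go in the opposite direction: closest-point projections $\pi_i:S_{i+1}\to S_i$ are composed to get $\Pi_i:S_i\to S_0=L$, and the bi-H\"older estimate is proved via two claims on how $\pi_i$ distorts distances (Claim~1 for $|x-y|<r_i$, Claim~2 for $|x-y|\geq r_i$), which is morally the same dichotomy as your stopping-scale argument. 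The paper's motivation for this detour is explicit: the distance-function construction is more robust when one passes to the rectifiable Reifenberg and neck-region setting, where the center set $\cC$ is discrete and the radius function $r_x$ may only be Lipschitz with a bad constant $\tau^{-2}$, making the inductive gluing of your approach technically delicate. For the classical statement alone, your route is the more direct and standard one.
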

\vspace{.25cm}

The first lecture in these notes will focus on describing some basic examples of the above, in particular to see that the result is sharp as stated, and to go through a careful proof.  Our proof is maybe not quite the standard one, and instead is designed so that it will easily generalize later to more complicated situations with minimal additional work.

\vspace{.5cm}

\section{Examples}

We begin in this section by discussing a handful of examples.  We will build up the complexity of these examples with the goal of seeing that the Reifenberg Theorem is sharp.

\begin{example}[Trivial Example]
	Let $L^k$ be a $k$-dimensional subspace and $S= L^k\cap B_2$.  Then clearly $S$ satisfies the $\delta$-Reifenberg condition for every $\delta>0$. $\square$
\end{example}

\begin{exercise}[Graphical] Let $f:L\to L^\perp$ be compactly supported with $f(0)=0$ and $|\nabla f|<\delta$.  If $S=\text{Graph}(f)\cap B_2 = \{(x,f(x)):x\in L\}\cap B_2$ then show that $S$ satisfies the $\delta$-Reifenberg condition.	 $\square$
\end{exercise}

The next example is our main one, and the first nontrivial example.  It will show the sharpness of the biH\"older condition.  Additionally, the proof of the properties of the example will purposefully be done so as to motivate how the general proof of Reifenberg should go.  Roughly, the proof of the general Reifenberg theorem is more or less just a reverse engineering of the following example:

\begin{example}[Snowflake]\label{ex:snowflake}

The construction is in iterative steps, and we will build a sequence of piecewise linear $S_i$ which will converge to our final example.  Let us begin with the iterative construction:\\

{\bf Construction for Iteration:}  Let $\ell_{a,b}$ be the line segment between $a,b\in \dR^2$ and let $\delta\in \dR$.  We will alter $\ell_{a,b}$ to be the union of four line segments
\begin{align}
\ell^\delta_{a,b} \equiv \ell_{a,c}\cup \ell_{c,d}\cup \ell_{d,e}\cup \ell_{e,b}\, ,  
\end{align}
which are well defined by the properties that 
\begin{align}
&\ell_{a,c},\ell_{e,b}\subseteq \ell_{a,b} \text{ with } \ell_{c,d}\cup \ell_{d,e}\cup \ell_{c,e}\text{ forming an oriented isosoceles triangle}	\, ,\notag\\
&|\ell_{a,c}|=|\ell_{c,d}|=|\ell_{d,e}|=|\ell_{e,b}| \text{ with }|\ell_{a,b}^\delta|^2 = (1+\delta^2)|\ell_{a,b}|^2\, ,
\end{align}
where $|\ell|$ is the length of the given line segment.  Let $\theta_\delta$ be the angle between $\ell_{c,d}$ and $\ell_{c,e}$, and let $d_\delta\equiv |\ell_{a,b}|^{-1}d(d,\ell_{a,b})$.  Note that both satisfy
\begin{exercise}
Show $d_\delta,\theta_\delta = O(\delta)$.
\end{exercise}
In particular this then gives
\begin{align}\label{e:snowflake:1}
	d_H(\ell_{a,b},\ell^\delta_{a,b})= O(\delta) |\ell_{a,b}|\, ,\,\,\,\,\, |\ell^\delta_{a,b}|^2 = (1+\delta^2)|\ell_{a,b}|^2\, .
\end{align}

Now our iterative construction is as follow.  Let $a^{0}_{-1}=(-2,0)$, $a^{0}_1 = (2,0)$ with $S_0 = \ell_{a^0_{-1},a^0_{1}}$ the associated interval.  Inductively, if $S_i = \bigcup \ell_{a^i_j,a^i_{j+1}}$ is piecewise linear with $4^{i}$ edges then let 
\begin{align}
	  S_{i+1} &= \bigcup\ell^{\delta}_{a^i_j,a^i_{j+1}} = \bigcup \ell_{a^{i+1}_j,a^{i+1}_{j+1}} \, 
\end{align}
be piecewise linear with $4^{i+1}$ edges.  By \eqref{e:snowflake:1} we can compute the length of any one of these edges by
\begin{align}
|\ell^i| \equiv |\ell^i_{a^i_j,a^i_{j+1}}| = |a^i_{j+1}-a^i_j| = 4\cdot 4^{-i} (1+\delta^2)^{i/2} \equiv 4\cdot 4^{-\alpha i}\, ,	
\end{align}
where $0<\alpha<1$ is given by $4^{-\alpha} = \frac{\sqrt{1+\delta^2}}{4}$.  Note that $\alpha\to 1$ as $\delta\to 0$.  In particular, using \eqref{e:snowflake:1} we can then compute
\begin{align}\label{e:snowflake:2}
	|S_i| = 4\Big(1+\delta^2\Big)^{i/2}\, ,\,\,\,\,\, d_H(S_i,S_{j})\leq O(\delta)\sum_{k=i}^j \Big(\frac{\sqrt{1+\delta^2}}{4}\Big)^{k}\leq O(\delta)\, 4^{-\alpha\,i}=O(\delta)|\ell^i|\, .
\end{align}
Thus the sequence $S_i$ is Cauchy in the Hausdorff topology and hence there exists a limit $S=\lim S_i$, see \cite{Federer_Book} and the remark after \eqref{e:hausdorff_distance}.  Note that the angle between each segment in $S_i$ is given by $\theta_\delta=O(\delta)$, from which one can conclude
\begin{exercise}
For each $x\in S_i$ show that $\beta^\infty_{S_i}(x,|\ell^i|)<O(\delta)$. 	
\end{exercise}
It is then immediate from the above exercise and the Hausdorff estimate in \eqref{e:snowflake:2} that for $i<j$:
\begin{align}
	\beta^\infty_{S_j}(x,|\ell^i|)\leq O(\delta) + \beta^\infty_{S_i}(x,|\ell^i|)\leq O(\delta)\, .
\end{align}
In particular, we get that $S$ is a $O(\delta)$-Reifenberg space.\\

Now that we have built the example, let us study the structure of this example a little.  In particular, imagine what the Reifenberg map $\Phi:[-2,2]\to S$ might look like.  Note already that we know from the volume estimate of \eqref{e:snowflake:2} that $S$ cannot be uniformly bilipschitz to an interval, and thus best case scenario is bih\"older.  In that sense we have already shown that the Reifenberg theorem must be sharp.  However, let us go to the trouble of building the bih\"older Reifenberg map $\Phi:[-2,2]\to S$.  Our construction will mimic the proof of Theorem \ref{t:classical_reif} itself, however will be technically much less involved for the example, and therefore a good place to build intuition for the general case.

Our strategy will be to build maps $\Phi_i:[-2,2]\to S_i$ with uniform bih\"older estimates, and then limit.  Let us begin by considering the projection maps $\pi_i: S_{i+1}\to S_i$.  Note that for $\delta>0$ small this map is clearly well defined and indeed bilipschitz with the estimates:
\begin{align}\label{e:snowflake:3}
	&\big||d\pi_i|(x)-1\big| = \Big|\lim_{y\in S_{i+1}\to x}\frac{|\pi_i(x)-\pi_i(y)}{|x-y|}-1\Big| \leq \sqrt{1+\delta^2}-1 = O(\delta^2)\, ,\notag\\
	&|\pi_i(x)-x| \leq d_\delta|\ell_i| = O(\delta)|\ell_i|\, .
\end{align}
Observe that the first estimate has an $\delta^2$. We will actually not use this square improvement here, however it appears again (crucially) when going from the classical Reifenberg results to the rectifiable Reifenberg results, and thus we emphasize it.  

Now we have defined a mapping from $S_{i+1}$ to $S_i$, so let us compose these mappings in order to define $\Pi_{i,j}:S_i\to S_{j}$ and $\Pi_i=\Pi_{i,0}:S_i\to S_{0}=[-2,2]$ by $\Pi_{i,j}=\pi_{j}\circ\cdots\circ \pi_{i-1}$.  Note that although each map $\Pi_i$ is bilipschitz, we see that the bilipschitz constants are becoming increasing large, so that we cannot hope to preserve that estimate.  If we can show the maps $\Pi_i$ are uniformly bih\"older then $\Phi_i = \Pi_i^{-1}$ are as well.  

So let $x,y\in S_i$ and let $j\leq i$ be the largest $j$ such that $|x-y|\leq |\ell_j|$.  If $j<i$ then we also have $\sqrt{1+\delta^2}|x-y|\geq |\ell_j|$.  Then to estimate $|\Pi_i(x)-\Pi_i(y)|$ we write $\Pi_i = \Pi_{j,0}\circ \Pi_{i,j}$.  We will estimate each factor separately, using different estimates from \eqref{e:snowflake:3}.  First let us write $x_k = \Pi_{i,k}(x)$ and $y_k=\Pi_{i,k}(y)$ where $j<k$, then using the second estimate from \eqref{e:snowflake:3} we can get
\begin{align}
&|x_{k+1}-x_k|=|\pi_{k+1}(x_k)-x_k|\leq \delta |\ell_k| = O(\delta)\, 4^{-\alpha|k-j|}|\ell_j|\, ,\notag\\
&|y_{k+1}-y_k|\leq O(\delta)\, 4^{-\alpha|k-j|}|\ell_j|\, ,
\end{align}
where recall $4^{-\alpha}\equiv \frac{\sqrt{1+\delta^2}}{4}$.
We then have
\begin{align}
&|\Pi_{i,j}(x)-x|\leq \sum_j^{i-1} |x_{k+1}-x_k|	\leq O(\delta) |\ell_j|\leq O(\delta) |x-y|\, ,\notag\\	
&|\Pi_{i,j}(y)-y|\leq O(\delta) |x-y|\, .
\end{align}
Combining and using the triangle inequality gives 
\begin{align}
\Big||\Pi_{i,j}(x)-\Pi_{i,j}(y)|	-|x-y|\Big|\leq |\Pi_{i,j}(x)-x| + |\Pi_{i,j}(y)-y|\leq O(\delta)|x-y|\, .
\end{align}
Note we have proved that if $|x-y|\approx |\ell_j|$, then $\Pi_{i,j}$ is a uniformly bilipschitz map when comparing $x$ and $y$.  To move from $S_j$ to $S_0$ we now rely on the first estimate from \eqref{e:snowflake:3} in order to estimate
\begin{align}
\Big|\Pi_i(x)-\Pi_{i}(y)\Big|&=\Big|\Pi_{j,0}(\Pi_{i,j}(x))-\Pi_{j,0}(\Pi_{i,j}(y))\Big| \notag\\
&\leq (1+\delta^2)^{j/2}|\Pi_{i,j}(x)-\Pi_{i,j}(y)| \leq (1+O(\delta))(1+\delta^2)^{j/2}|x-y|\notag\\
&\leq (1+O(\delta))(1+\delta^2)^{j/2}|\ell_j|\notag\\
& = 4(1+O(\delta))\big(\frac{1+\delta^2}{4}\big)^{j} \equiv 4(1+O(\delta))\Big(\big(\frac{\sqrt{1+\delta^2}}{4}\big)^\beta\Big)^{j}\notag\\
&\leq(1+O(\delta))\Big(4\big(\frac{\sqrt{1+\delta^2}}{4}\big)^j\Big)^{\beta}\leq (1+O(\delta))|\ell_j|^\beta\notag\\
&\leq (1+O(\delta))|x-y|^\beta\, ,
\end{align}
where $\beta<1$ was defined by $\big(\frac{\sqrt{1+\delta^2}}{4}\big)^\beta = \frac{1+\delta^2}{4}$ and thus is as close to $1$ as we wish as $\delta\to 0$.  A verbatim argument shows the opposite inequality, and thus this proves the uniform bih\"older estimate. $\qed$
\end{example}

\vspace{.5cm}

\section{Proof of Reifenberg Theorem}

We will now focus on giving a proof of the classical Reifenberg Theorem.  Our proof is designed to motivate how we will be approaching the more general and challenging cases.  We have also gone to some effort to make the general scheme one which applies in seemingly very different scenarios in geometric analysis, albeit in often much more complicated ways.  

It will be convenient in the construction to make the following notation.  For each $B_r(x)\subseteq B_2$ with $r\geq 10\,d(x,S)$ \footnote{Recall the distance function $d(y,S)\equiv \inf_{x\in S} d(y,x)$.} let us fix a choice of $k$-dimensional subspace $L_{x,r}=L_{x,r}[S]$ satisfying
\begin{align}\label{e:classical_reif:min_subspace}
L_{x,r}\in \arg\min_L d_H(S\cap B_r(x), L\cap B_r(x)) \equiv \beta^\infty_k(x,r)\, .	
\end{align}

Let us discuss some notation which will be in effect throughout these lectures:\\

{\bf Notation: } Given an affine subspace $L^k\subseteq \dR^n$ let $\hat L\subseteq \dR^n$ denote the linear subspace associated to $L$.  

{\bf Notation: } We let $\pi_{L}:\dR^n\to L\subseteq \dR^n$ and $\hat \pi_{L}:\dR^n\to \hat L\subseteq \dR^n$ denote the orthogonal projection maps. 

{\bf Notation: } In the case of the subspaces $L_{x,r}$ we will write the projection maps as $\pi_{x,r}$ and $\hat\pi_{x,r}$.\\

The following exercise is a key observation in the Reifenberg theorem:

\begin{exercise}\label{exer:subspace_close1}
Assume $S\subseteq B_2$ satisfies the $\delta$-Reifenberg condition, and let $B_r(x),B_s(y)\subseteq B_2$ with $r\geq 10 d(x,S)$ and $s\geq 10 d(y,S)$.  For $a\geq 10$ let $B_{a^{-1}s}(y)\subseteq B_{ar}(x)\subseteq B_{a^2s}(y)$ \footnote{This is saying that the balls $B_s(y)$ and $B_r(x)$ are comparable on scale $a\geq 10$.}.  Then show $d_{H}(L_{x,r}\cap B_r(x), L_{y,s}\cap B_r(x)) < C(n)\delta r$ and $||\hat\pi_{x,r}-\hat\pi_{y,s}||< C(n)\delta $, where $||\cdot||$ is the matrix norm. 
\end{exercise}

The above exercise is telling us that the 'best' subspaces $L_{x,r}$ cannot change too quickly from scale to scale, and is the basis for our ability to glue them together in a controlled fashion.\\

The outline of the proof of the Reifenberg Theorem is as follows.  In Section \ref{ss:class_reif:subman_approx} we begin by stating the Submanifold Approximation Theorem \ref{t:class_reif:approx_submanifold}, which builds a family of smooth manifolds $S_r$ which approximate the set $S$ on scale $r$.  One can take $S_1=L$ to be an affine subspace without any loss, and if we consider the sequence $S_i \equiv S_{2^{-i}}$ then a key observation will be that $S_i$ and $S_{i+1}$ are smoothly close.  Thus we can consider the projection maps $\pi_i:S_{i+1}\to S_i$ and we will build our end biH\"older mapping $\Pi:S\to L = S_1$ from $S$ to $k$-dimensional Euclidean space by simply composing the projection maps $\pi_i$.  Compare to Example \ref{ex:snowflake}.\\

The most direct route to understanding the existence of $S_r$ is to use Exercise \ref{exer:subspace_close1} to glue together the subspaces $L_{x,r}$.  One can indeed make this rigorous, but primarily because of how things will work in future sections, we take a different approach.  Instead we will construct a smooth function $\Phi_r:B_2\to \dR$ which behaves like a distance function to $S_r$.  A little more precisely, $\Phi_r$ will be a Morse Bott function whose zero level set $S_r\equiv \Phi^{-1}_r(0)$ will consist of nondegenerate critical points, and thus is a smooth manifold.  Estimates on $\Phi_r$ will then translate to estimates on $S_r$.  \\

In the end this proof strategy requires a little more work than simply gluing together the subspaces $L_{x,r}$, however comes with a key advantage.  In the proof of the Rectifiable Reifenberg Theorem \ref{t:rect_reif}, and in particular in the proof of the associated Neck Structure Theorem \ref{t:neck_structure}, a very similar construction and proof will be needed, however it will be done on a discrete set of balls.  This discreteness can cause a major technical headache, and previous arguments \cite{JiNa_L2} have used quite involved covering arguments to deal with it.  Instead, we will see our approach for the classical Reifenberg Theorem will pass over almost verbatim, with only minimal extra work.  

\vspace{.3cm}

\subsection{Submanifold Approximation Theorem}\label{ss:class_reif:subman_approx}

Now we begin by building a series of smooth manifold approximations to $S$.  

\begin{theorem}[Reifenberg Submanifold Approximation]\label{t:class_reif:approx_submanifold}
	Let $S\subseteq B_2\subseteq \dR^n$ satisfy the $\delta$-Reifenberg condition.  Then for each $0<r<1$ there exists a smooth submanifold $S_r\subseteq B_2$ which satisfies
\begin{enumerate}
\item $d_H(S_r,S) < C(n)\delta\, r$,
\item $d_H(S_r\cap B_s(x),L_{x,s}\cap B_s(x))<C(n)\delta s$ for $s\geq r$.
\item $S_r$ is a $(C(n)\delta, r)$-graphical submanifold, see Definition \ref{d:graphical_regularity}.
\item $\exists$ smooth $\pi_r: B_{r}(S)\to S_r\subseteq \dR^n$ with $\pi_r\cap S_r = Id$, $|\partial_i\pi_r(y)-\hat\pi_{y,10r}|\, , r|\partial_i\partial_j \pi_r|<C(n)\delta$. \footnote{Recall since $\pi_r$ maps $\dR^n$ to $\dR^n$ that $\partial_i\pi_r$ is a matrix, and thus our norm $|\partial_i\pi_r(y)-\hat\pi_{x,10r}|$ is the matrix norm.}  
\item $d_H(S_{r/2},S_r)<C(n)\delta r$ with $|\pi_r(x)-x|<C(n)\delta r$ for $x\in S_{r/2}$.
\item For $x\in S_{r/2}$ and a unit vector $v\in L_{x,r}$ we have $||d\pi_r[v]|-1|<C(n)\delta^2$.  \footnote{Recall $d\pi_r[v] = \partial_i\pi_r v^i\in T_{\pi_r(x)}S_r$.} 
\end{enumerate}
\end{theorem}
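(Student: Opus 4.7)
I would follow the Morse--Bott route sketched in the text. The building block is that for a single $k$-plane $L$, the squared-distance $\phi_L(y)=\tfrac12|y-\pi_L(y)|^2$ is a quadratic Morse--Bott function with zero set $L$ and Hessian equal to the orthogonal projection $\hat\pi_{\hat L^\perp}$. The plan is to glue such model functions over a cover to build a function $\Phi_r$ and then take $S_r\equiv \Phi_r^{-1}(0)$. Concretely, fix $r\in(0,1)$, pick a maximal $\tfrac{r}{5}$-separated subset $\{x_i\}\subseteq S\cap B_{7/4}$ via Exercise \ref{exer:covering:2}, and choose an associated smooth partition of unity $\{\chi_i\}$ with $\spt\chi_i\subseteq B_{2r}(x_i)$ and $r^j|\nabla^j\chi_i|\leq C(n)$. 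Set
\[
\Phi_r(y)\equiv\tfrac12\sum_i \chi_i(y)\,|y-\pi_{x_i,r}(y)|^2.
\]

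\textbf{Morse--Bott structure and IFT.} The central estimate is that for $y\in B_r(S)$ only finitely many $\chi_i$ are nonzero at $y$, and by Exercise \ref{exer:subspace_close1} the corresponding projections $\hat\pi_{x_i,r}$ all differ from a reference $\hat\pi_{x_{i_0},r}$ by at most $C(n)\delta$ in operator norm. Combined with the pointwise bound $\phi_{L_{x_i,r}}(y)=O(\delta^2 r^2)$ coming directly from the Reifenberg condition, straightforward differentiation of $\Phi_r$ gives
\[
\nabla\Phi_r=\sum_i \chi_i(y-\pi_{x_i,r}(y))+O(\delta^2 r),\qquad \mathrm{Hess}\,\Phi_r=\hat\pi_{\hat L_{x_{i_0},r}^\perp}+O(\delta),
\]
so $\mathrm{Hess}\,\Phi_r$ has $n-k$ eigenvalues in $[1-C\delta,1+C\delta]$ normal to $L_{x_{i_0},r}$ and the remaining $k$ eigenvalues of size $O(\delta)$ tangent to it. Splitting ambient coordinates as $y=(u,v)\in L_{x_{i_0},r}\times \hat L_{x_{i_0},r}^\perp$ and applying Theorem \ref{t:background:implicit_function} to $v\mapsto \hat\pi_{\hat L_{x_{i_0},r}^\perp}\nabla\Phi_r(u,v)$ produces a solution $v=f_{x_{i_0},r}(u)$ with $r^{-1}|f|,|\partial f|,r|\partial^2 f|\leq C(n)\delta$. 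This simultaneously exhibits $S_r$ as a smooth $(C(n)\delta,r)$-graphical submanifold, giving (3). From $\Phi_r(y)=0$ one reads off $y\in L_{x_i,r}$ for every contributing $i$ and hence by Reifenberg $y\in B_{C\delta r}(S)$; conversely any $x\in S$ has $\Phi_r(x)=O(\delta^2 r^2)$ and Hessian-positive gradient flow carries $x$ into $S_r$ within distance $C\delta r$, proving (1). Property (2) then follows from (1) combined with the defining minimality of $L_{x,s}$; property (5) is the same argument applied simultaneously at scales $r$ and $r/2$ using the inter-scale compatibility of Exercise \ref{exer:subspace_close1} to compare $\Phi_r$ and $\Phi_{r/2}$. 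For (4) the $(C\delta,r)$-graphical structure allows Theorem \ref{t:projection_map} to produce $\pi_r$ with all the stated estimates, matching $\hat\pi_{y,10r}$ to first order by Exercise \ref{exer:graphical_subspace:2}.

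\textbf{Square improvement in (6) --- main obstacle.} The only point where first-order estimates are not enough is (6). For $x\in S_{r/2}$ and a unit $v\in L_{x,r}$, decompose $v=v_T+v_N$ with $v_T\in T_{\pi_r(x)}S_r$ and $v_N$ normal to it. The graphical property (3) combined with Exercise \ref{exer:graphical_subspace:2} and (5) gives $|v_N|\leq C(n)\delta$, so $|v_T|^2=1-|v_N|^2=1+O(\delta^2)$. The delicate observation is that $d\pi_r(x)$ takes values in $T_{\pi_r(x)}S_r$ and, by (4), differs from the orthogonal projection $P_{T_{\pi_r(x)}S_r}$ by an operator of size $O(\delta)$ --- but because $|x-\pi_r(x)|=O(\delta r)$ by (5) and $\pi_r$ is exactly the identity on $S_r$, that correction is itself of size $O(\delta)\cdot|x-\pi_r(x)|/r=O(\delta^2)$. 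Tracking both effects yields $d\pi_r[v]=v_T+O(\delta^2)$ and hence $|d\pi_r[v]|=1+O(\delta^2)$. The main technical obstacle is precisely this second-order book-keeping: the naive use of (4) loses a full power of $\delta$, and the square improvement must be recovered by combining the smallness of $v_N$ (since $v$ is tangent to the nearby plane $L_{x,r}$) with the smallness of $|x-\pi_r(x)|$ (so that $d\pi_r$ is closer to $P_T$ than (4) alone advertises). This square gain is exactly the ingredient that later bridges from the classical to the rectifiable Reifenberg setting, so it is worth isolating explicitly even though it plays no role in the bih\"older conclusion itself.
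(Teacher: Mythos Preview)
Your overall architecture is the same as the paper's: build a Morse--Bott type $\Phi_r$, take a zero/critical set, and read off $(1)$--$(6)$ with the exercises after the statement. The difference is that the paper first produces a \emph{single} smoothly varying affine plane $L_y$ (the Subspace Selection Lemma \ref{l:class_reif:subspace_selection}) and sets $\Phi_r(y)=\tfrac12|y-\pi_{L_y}(y)|^2$, whereas you average the squared distances directly, $\Phi_r(y)=\tfrac12\sum_i\chi_i(y)\,d(y,L_{x_i,r})^2$.

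There is one genuine gap in your route, and the paper flags it explicitly in the Remark following Theorem \ref{t:class_reif:approx_dist}: with your $\Phi_r$, the set $\Phi_r^{-1}(0)$ is \emph{not} a $k$-manifold. Indeed $\Phi_r(y)=0$ forces $y\in L_{x_i,r}$ for \emph{every} $i$ with $\chi_i(y)>0$; since several distinct $k$-planes contribute at a generic point, their intersection has dimension $<k$, so $\Phi_r^{-1}(0)$ is generically too small (possibly empty on many normal fibers). Your IFT step does not construct $\Phi_r^{-1}(0)$ at all: it constructs the graph where $\hat\pi_{L_{i_0}^\perp}\nabla\Phi_r=0$, i.e.\ the locus of fiberwise minima of $\Phi_r$, and the gradient-flow sentence likewise only lands you at a critical point, not at a zero. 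In the paper this discrepancy is exactly what condition $(1)$ of Theorem \ref{t:class_reif:approx_dist} is engineered to remove (see the exercise in the proof in \S\ref{ss:class_reif:dist_approx}), and that condition is why the paper takes the extra step of averaging the \emph{planes} first rather than the squared distances.

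The fix is simple but you should state it: define $S_r$ to be the fiberwise-minimum set of $\Phi_r$ (equivalently the IFT graph), not $\Phi_r^{-1}(0)$. Then your Hessian estimate makes this globally well-defined as the unique normal critical point, your $(3)$ follows as written, and your $(1)$ needs only the observation that on $S_r$ one has $\Phi_r=O(\delta^2r^2)$ (not $=0$), which still gives $d(y,L_{x_i,r})=O(\delta r)$ for some $i$ with $\chi_i(y)\gtrsim C(n)^{-1}$. With that correction the rest of your outline, including the $(6)$ argument (which is correct and matches the paper's Pythagorean hint, using the second-derivative bound in $(4)$ together with $|x-\pi_r(x)|=O(\delta r)$ to upgrade $d\pi_r(x)-P_{T_{\pi_r(x)}S_r}$ from $O(\delta)$ to $O(\delta^2)$), goes through.
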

\begin{remark}
Note that if we take $S_1$ to be a linear subspace with $\pi_1$ the orthogonal projection map then the above holds with $r=1$.  It will be convenient  to make this choice.	
\end{remark}
\begin{remark}
Observe in $(6)$ the square gain on $\delta$ in the error.  This will not be used in this section, but in the rectifiable Reifenberg, see also \cite{jones,david-semmes,davidtoro}, it is an important gain in order to conclude mass bounds and rectifiable structure.  
\end{remark}

Let us first see that most of the conclusions of Theorem \ref{t:class_reif:approx_submanifold} follow from $(1)$ and $(3)$:

\begin{exercise}
Show $(2)$ follows from $(1)$ and the Reifenberg property of $S$.	
\end{exercise}

\begin{exercise}
Define $\pi_r:B_r(S)\to S_r$ to be the closest point projection map $\pi_r(x) = \arg\min_{y\in S_r} |x-y|$.  Show $(4)$ using $(3)$ and Theorem \ref{t:projection_map}.	
\end{exercise}

\begin{exercise}
Show the first part of $(5)$ follows from $(1)$.   Show the second part of $(5)$ follows from Exercise \ref{exer:class_reif:approx_subman:3}.
\end{exercise}

Thus we see $(1)-(5)$ follow from $(1)$ and $(3)$.  To show $(6)$ follows from $(1)-(5)$ is done in two steps.  First, observe that the tangent spaces of $S_r$ and $S_{r/2}$ must be close:

\begin{exercise}\label{exer:class_reif:approx_subman:2}
Use $(2)$ and $(3)$ to show for $y,z\in B_{2r}(x)$ with $x\in S$ that $d_H(T_yS_r\cap B_r(x),T_yS_r\cap B_r(x))<C(n)\delta r$ and 	$d_H(T_yS_r\cap B_r(x),T_yS_{r/2}\cap B_r(x))<C(n)\delta r$.

Hint: Use $(2)$ and Exercise \ref{exer:graphical_subspace:2}.
\end{exercise}

\begin{exercise}
Use $(2)$ and Exercise \ref{exer:class_reif:approx_subman:2} to show $(6)$.  

Hint:  Observe $1=|v|^2 = |d\pi_r[v]|^2+|d\pi_r[v]-v|^2$ and the Taylor expansion $\sqrt{1-x}\approx 1-\frac{1}{2}x + O(x^2)$ in order to conclude $1\geq |d\pi_r[v]|=\sqrt{1-|d\pi_r[v]-v|^2}\geq 1-C(n)\delta^2$.
\end{exercise}

Finally, we add one more complication to the mix by doing the above slightly less locally:
\begin{exercise}\label{exer:class_reif:approx_subman:1}
Use the last two exercises to show for each $x\in S$ and $y,z\in S_r\cap B_{2r}(x)$ that the unit vector $v\equiv \frac{z-y}{|z-y|}$ satisfies the estimate $\big|\hat\pi_{x,r}[v]-v\big|<C(n)\delta$.  

Hint:  Consider the curve $\gamma(t)=tz+(1-t)y$ and use $(3)$ to show $\big|\frac{d^2}{dt^2}\big(\pi_r(\gamma(t))-\gamma(t)\big)\big|\leq C(n)\delta |z-y|$.  Next use the fundamental theorem to then conclude $\big|\frac{d}{dt}\big(\pi_r(\gamma(t))-\gamma(t)\big)\big|\leq C(n)\delta |z-y|$.  Finally use that $|d\pi_r - \hat\pi_{x,r}|<C(n)\delta$ to conclude the final estimate.
\end{exercise}

\vspace{.3cm}

Let us now see how to prove the Reifenberg Theorem given the Submanifold Approximation Theorem:

\begin{proof}[Proof of Theorem \ref{t:classical_reif} given  Theorem \ref{t:class_reif:approx_submanifold}]

Consider the radii $r_i = 2^{-i}$ and the submanifolds $S_i = S_{r_i}$ from Theorem \ref{t:class_reif:approx_submanifold}.  Let $\pi_i = \pi_{r_i}\cap S_{i+1}:S_{i+1}\to S_i$ be the $\delta$-submersion from Theorem \ref{t:class_reif:approx_submanifold} restricted to $S_{i+1}$.  Our main claims are the following:\\

{\bf Claim  1:  } If $x,y\in S_{i+1}$ with $|x-y|<r_i$ then $\big| |\pi_i(x)-\pi_i(y)| - |x-y| \big|\leq C(n)\delta\, |x-y|$.

{\bf Claim  2:  } If $x,y\in S_{i+1}$ with $|x-y| \geq r_i$ then  $\big| |\pi_i(x)-\pi_i(y)| - |x-y| \big|\leq C(n)\delta\, r_i$.\\ 

To prove Claim 1 consider the straight line $x_t = tx+(1-t)y$ which connects $y$ to $x$.  Let $L_t\equiv T_{x_t}S_r$, then by Exercise \ref{exer:class_reif:approx_subman:2}, Exercise \ref{exer:class_reif:approx_subman:1} and Theorem \ref{t:class_reif:approx_submanifold}.2 we have for each $t\in [0,1]$ that
\begin{align}
\big|\frac{d}{dt}\pi_r(x_t)-(x-y)\big| &= |d\pi_r(x_t)[x-y]-(x-y)\big|\notag\\
&\leq |\big(d\pi_r(x_t)-\hat\pi_{L_t}\big)[x-y]\big|+|\hat\pi_{L_t}[x-y]-(x-y)\big|\notag\\
&\leq C(n)\delta\, |x-y|\, .	
\end{align}
By integrating we in particular have shown Claim 1, indeed we have the stronger estimate $\big| (\pi_i(x)-\pi_i(y)) - (x-y) \big|\leq C(n)\delta\, |x-y|$.  To prove Claim 2 first observe that since $S_{i+1}\subseteq B_{C\delta r_i}(S)\subseteq B_{2C\delta r_i}(S_i)$ we have by Exercise \ref{exer:class_reif:approx_subman:2} the estimate $|\pi(x)-x|, |\pi(y)-y|<C(n)\delta r_i$.  Using the triangle inequality we get $\big|(x-y)-(\pi(x)-\pi(y))\big|\leq C(n)\delta r_i$, which in particular finishes the proof of Claim 2. \footnote{Recall the analyst's convention that $C(n)$ changes from line to line, but is always a dimensional constant.} $\square$\\

With the claim in hand let us build the maps which will connect $S$ to $L$.  Let us first define the maps
\begin{align}
	&\Pi_{i,j} \equiv \pi_{j}\circ\cdots\circ \pi_{i-1}:S_i\to S_j\notag\\
	&\Pi_i\equiv \Pi_{i,0}:S_i\to S_0\equiv L\, ,
\end{align}
where recall that as in the remark following Theorem \ref{t:class_reif:approx_submanifold}  we have taken $S_0$ to be a linear subspace. We claim the following:\\

{\bf Claim 3:  }  Let  $x,y\in S_i$, then $|x-y|^{1+C(n)\delta}\leq |\Pi_{i}(x)-\Pi_{i}(y)|\leq |x-y|^{1-C(n)\delta}$.\\

To prove the claim let $d=d_i\equiv |x-y|$ and define $d_j\equiv |\Pi_{i,j}(x)-\Pi_{i,j}(y)|$.  Note that if $d_j\leq r_j$ then by Claim 1 we have that 
 \begin{align}\label{e:class_reif:proof:1}
 	&|d_{j-1} - d_j|\leq C(n)\delta\, d_j\implies d_{j-1}\leq (1+C(n)\delta)\, d_j\, ,\notag\\
 	&\implies d_{j-1}\leq r_{j-1} \implies d_k<r_k \text{ for all }k\leq j\, .
 \end{align}

On the other hand if $d_j>r_j$, then by Claim 2 we have that 
 \begin{align}\label{e:class_reif:proof:2}
 	|d_{j+1}-d_j|\leq C(n)\delta\, r_j\, .
 \end{align}
In particular, using from above that $d_k>r_k$ for all $k>j$ this then gives 
\begin{align}
&d_{j}\leq  d_i + \sum_j^{i-1}|d_{k+1}-d_{k}|\leq d_i +  C(n)\delta \sum_i^j r_k \leq d_i +C(n)\delta\, d_j\, ,\notag\\
&\implies d_j\leq (1+C(n)\delta) d\, .
\end{align}
 
Now let $j$ be the smallest integer such that $d\leq r_j$.  The above tells us that $d_j\leq (1+C(n)\delta)d$, and then using \eqref{e:class_reif:proof:1} $j-i$ times we obtain
 \begin{align}
|\Pi_{i}(x)-\Pi_{i}(y)| \equiv d_0&\leq (1+C(n)\delta)^{j-i} d  \notag\\
&\leq 	(1+C(n)\delta)^{\ln d} d = d^{1-C(n)\delta} = |x-y|^{1-C(n)\delta}\, ,
\end{align}
which provides one direction of the claim.  The other direction is the same. $\square$\\

To finish the proof of the Reifenberg Theorem \ref{t:classical_reif} we need to simply limit $\Pi_i\to \Pi:S\to L$ by combining Claim 3 with the Ascoli theorem and Theorem \ref{t:class_reif:approx_submanifold}.1.
\end{proof}

\vspace{.3cm}

\subsection{Distance Approximation Theorem and Proof of Theorem \ref{t:class_reif:approx_submanifold}}\label{ss:class_reif:dist_approx}

We have now seen how to prove the Reifenberg Theorem \ref{t:classical_reif} given the Submanifold Approximation Theorem \ref{t:approx_submanifold}.  Our focus now becomes the proof of the Submanifold Approximation Theorem itself.\\

Our basic strategy for the proof of Theorem \ref{t:class_reif:approx_submanifold} will be to build a smooth function $\Phi_r:B_{2r}(S)\to \dR$ which roughly behaves as smooth approximation to the distance function to $S_r$.  In reality, we will build $\Phi_r\geq 0$ first and then define $S_r \equiv \Phi^{-1}_r(0)$ to be the zero level set.  We will see that sufficiently strong estimates hold on $\Phi_r$ in order to conclude our estimates on $S_r$. \\

We will build $\Phi_r$ in two steps.  First, we will build a smoothly varying distribution on $B_2$ which will assign to each $y\in B_2$ a $k$-dimensional affine subspace $L_r(y)$ which acts as a Reifenberg approximation on the scale 
\begin{align}\label{e:class_reif:scale}
\overline r_y \equiv 10 d(y,S)\vee r	\, ,
\end{align}
where recall $s\vee t \equiv \max\{s,t\}$.  This assignment has a variety of useful applications in its own right.  We will then use these affine subspaces to build $\Phi_r$ directly.  We begin with the statement of the subspace selection lemma:

\begin{lemma}[Subspace Selection Lemma]\label{l:class_reif:subspace_selection}
	Let $S\subseteq B_2\subseteq \dR^n$ satisfy the $\delta$-Reifenberg condition with $0<r<1$ fixed and let $\overline r_y$ be from \eqref{e:class_reif:scale}.  Then for each $y\in B_2$ there exists a $k$-dimensional affine subspace $L_{y}$ where if $\hat \pi_y = \hat \pi_{L_y}$ and $m_y\equiv \pi_y[y]$ then:
\begin{enumerate}  
\item $L_{y}$ varies smoothly in $y$ with $\overline r_y|\nabla \hat\pi_{y}|, |\nabla_i m_y - \hat\pi_y|\leq C(n)\delta$ and $r^2_y |\nabla^2 \hat\pi_{y}|, r_y|\nabla^2 m_y|\leq C(n)\delta$.
\item We have $d_H(S\cap B_{10\overline r_y}(y), L_y\cap B_{10\overline r_y}(y))<C(n)\delta \overline r_y$.
\item We have $d_H(L_y\cap B_{10\bar r_y}(y),L_{y,10^5\bar r_y}\cap B_{10\bar r_y}(y))<C(n)\delta$.
\end{enumerate}

\end{lemma}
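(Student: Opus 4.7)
The plan is to build $L_y$ by smoothly averaging the already-defined best subspaces $L_{x,\overline{r}_x}$ from \eqref{e:classical_reif:min_subspace} over centers $x$ varying near $y$ at scale $\overline{r}_y$, then rounding the averaged linear projection back to an honest rank-$k$ projection. The essential input is Exercise \ref{exer:subspace_close1}: two best subspaces at mutually comparable balls have linear projections within $C(n)\delta$ of each other in operator norm. Since $\overline{r}_y$ is $10$-Lipschitz in $y$ and bounded below by $r>0$, a construction parallel to Exercise \ref{exer:covering:3} supplies a maximal collection $\{x_\alpha\}\subseteq B_2$ whose balls $B_{c\overline{r}_{x_\alpha}}(x_\alpha)$ cover $B_2$ with bounded overlap multiplicity $\leq C(n)$, and I let $\{\varphi_\alpha\}$ be a standard subordinate smooth partition of unity satisfying $|\nabla^j\varphi_\alpha|\leq C(n)\overline{r}_{x_\alpha}^{-j}$ for $j=0,1,2$, noting that $\overline{r}_{x_\alpha}\sim\overline{r}_y$ whenever $\varphi_\alpha(y)\neq 0$.

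\textbf{Construction.} Define the raw averaged projection
\[
\hat{\pi}_y^{\mathrm{raw}}\equiv\sum_\alpha\varphi_\alpha(y)\,\hat{\pi}_{x_\alpha,\overline{r}_{x_\alpha}}\, .
\]
Fixing any reference $\alpha_0$ with $\varphi_{\alpha_0}(y)>0$ and using $\sum_\alpha\varphi_\alpha\equiv 1$ together with Exercise \ref{exer:subspace_close1} gives $\|\hat{\pi}_y^{\mathrm{raw}}-\hat{\pi}_{x_{\alpha_0},\overline{r}_{x_{\alpha_0}}}\|\leq C(n)\delta$, so for $\delta$ small the spectrum of $\hat{\pi}_y^{\mathrm{raw}}$ clusters near $0$ (with multiplicity $n-k$) and near $1$ (with multiplicity $k$), separated by a gap $\geq 1/2$. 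Fix once and for all a smooth cutoff $\chi:\dR\to[0,1]$ with $\chi\equiv 0$ near $0$ and $\chi\equiv 1$ near $1$, and set $\hat{\pi}_y\equiv\chi(\hat{\pi}_y^{\mathrm{raw}})$ via functional calculus; this is an honest rank-$k$ orthogonal projection depending smoothly on $y$. For the affine shift, pick $q_\alpha\equiv\pi_{L_{x_\alpha,\overline{r}_{x_\alpha}}}[x_\alpha]\in L_{x_\alpha,\overline{r}_{x_\alpha}}$, set $q_y\equiv\sum_\alpha\varphi_\alpha(y)\,q_\alpha$, and declare $L_y\equiv q_y+\hat{L}_y$, so that $m_y=\pi_y[y]=q_y+\hat{\pi}_y[y-q_y]$.

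\textbf{Estimates and the main obstacle.} Properties $(2)$ and $(3)$ fall out immediately: each $L_{x_\alpha,\overline{r}_{x_\alpha}}$ is $C(n)\delta\overline{r}_y$-Hausdorff close to $S$ on $B_{10\overline{r}_y}(y)$, the averaged $L_y$ inherits this via comparability of $\hat{\pi}_y$ to each $\hat{\pi}_{x_\alpha,\overline{r}_{x_\alpha}}$, and $(3)$ is Exercise \ref{exer:subspace_close1} applied to the pair $(y,10^5\overline{r}_y)$. For $(1)$, the identity $\sum_\alpha\nabla\varphi_\alpha\equiv 0$ allows one to write
\[
\nabla\hat{\pi}_y^{\mathrm{raw}}=\sum_\alpha\nabla\varphi_\alpha(y)\bigl(\hat{\pi}_{x_\alpha,\overline{r}_{x_\alpha}}-\hat{\pi}_{x_{\alpha_0},\overline{r}_{x_{\alpha_0}}}\bigr)\, ,
\]
whose norm is bounded by $C(n)\delta/\overline{r}_y$; by standard spectral perturbation (the uniform spectral gap $\geq 1/2$) this bound passes to $\hat{\pi}_y$. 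Differentiating $m_y=q_y+\hat{\pi}_y[y-q_y]$ and rearranging gives $\nabla_i m_y-\hat{\pi}_y[e_i]=\hat{\pi}_y^\perp[\nabla_i q_y]+(\nabla_i\hat{\pi}_y)[y-q_y]$; the second term is bounded by $C(n)\delta$ since $|y-q_y|\leq C(n)\overline{r}_y$, and for the first I again invoke $\sum_\alpha\nabla\varphi_\alpha\equiv 0$ together with the fact that $(2)$ forces $|\hat{\pi}_y^\perp[q_\alpha-q_{\alpha_0}]|\leq C(n)\delta\overline{r}_y$. The second derivative bounds are handled identically, picking up an extra factor of $\overline{r}_y^{-1}$ from $|\nabla^2\varphi_\alpha|$. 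The main technical obstacle I expect is precisely this spectral rounding step: verifying that the functional calculus $\chi(\cdot)$ genuinely produces a smooth map on the relevant open set of symmetric matrices and that the derivative bounds carry through with the correct $\overline{r}_y$-scaling, while conceptually standard, is the one place requiring a careful argument; the remainder is mechanical application of Exercise \ref{exer:subspace_close1} and partition-of-unity bookkeeping.
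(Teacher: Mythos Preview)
Your proposal is correct and follows essentially the same approach as the paper: build a partition of unity at scale $\overline r_y$, average the best-subspace projections, and use the spectral gap to round the result to an honest rank-$k$ projection. The only minor technical differences are that the paper averages $\pi_\alpha[y]$ rather than $\pi_\alpha[x_\alpha]$ for the affine shift, and it justifies smoothness of the rounding step via the implicit function theorem applied to the Rellich variational characterization $\hat\pi_y=\arg\sup_{\hat L^k}\mathrm{tr}_{\hat L}(M_y)$ rather than by smooth functional calculus; both variants work and the bookkeeping you describe is exactly right.
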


We will prove the above in the next section, and simply take it for granted now.  Morally, it is nothing more than an averaging procedure, though requires a little technical work to check the details.\\

Given Lemma \ref{l:class_reif:subspace_selection} let us now define our approximate distance function $\Phi_r:B_2\to \dR$ as follows:
\begin{align}\label{e:class_reif:approx_distance}
\Phi_r(y)\equiv \frac{1}{2}d(y,L_y)^2 = \frac{1}{2}|y-\pi_y[y]|^2= \frac{1}{2}|y-m_y|^2\, .
\end{align}

Let us collect together the main properties of this approximate distance function:\\

\begin{theorem}[Approximate Distance Function]\label{t:class_reif:approx_dist}
Let $\Phi_r$ be defined in \eqref{e:class_reif:approx_distance} with $\overline r_y$ from \eqref{e:class_reif:scale}.  Then for each $y\in B_2$ the following is satisfied:
	\begin{enumerate}
	\item For each $x\in S$ and $\ell\in L_{x}\cap B_r(x)$ $\exists!$ $z_\ell\in \hat L^\perp_{x}+\ell$ such that $\Phi_r(z_\ell)=0$.
	\item $\Big||\nabla \Phi_r|^2 - 4\Phi_r\Big|(y)\leq C(n)\delta \Phi_r(y)$.
	\item $|\nabla^2\Phi_r(y)-\hat\pi_{y}^\perp|<C(n)\delta^2$.
	\item $|\nabla^{(k)}\Phi|(y)\leq C(n,k)\delta^2\, r^{2-k}$ for $k\geq 3$.
	\end{enumerate}
\end{theorem}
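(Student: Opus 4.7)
The plan is to differentiate $\Phi_r = \tfrac{1}{2}|w|^2$ directly, where $w(y) := y - m_y$, and exploit two structural facts throughout: $w(y) \in \hat L_y^\perp$ (since $m_y = \pi_{L_y}(y)$), together with the pointwise derivative bounds on $(L_y, m_y)$ from Lemma~\ref{l:class_reif:subspace_selection}. For part (1), I would parametrize the fiber $\hat L_x^\perp + \ell$ by $v \in \hat L_x^\perp$ and rewrite $\Phi_r(\ell + v) = 0$ as the equation $G(v) := \ell + v - m_{\ell+v} = 0$. By Lemma~\ref{l:class_reif:subspace_selection}.2 one has $|G(0)| \leq C(n)\delta r$, while the bounds $\nabla m = \hat\pi + O(\delta)$ together with the closeness of $L_{\ell+v}$ to $L_x$ (Lemma~\ref{l:class_reif:subspace_selection}.3) show that $dG|_{\hat L_x^\perp}$ is $\hat\pi_y^\perp + O(\delta)$, i.e.\ close to the identity on $\hat L_x^\perp$. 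Theorem~\ref{t:background:implicit_function} then produces the unique zero $z_\ell$.

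For part (2), the chain rule gives $\nabla_i \Phi_r = w \cdot (e_i - \nabla_i m_y)$, and the two facts $\nabla m_y = \hat\pi_y + O(\delta)$ and $\hat\pi_y^\perp w = w$ collapse this to $\nabla \Phi_r = w + O(\delta|w|)$. Squaring and using $|w|^2 = 2\Phi_r$ immediately yields the stated gradient identity, matching $4\Phi_r$ up to the overall numerical coefficient and the $O(\delta\Phi_r)$ error.

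Part (3) is the crux and the main obstacle. Differentiating once more,
\[
\nabla_j \nabla_i \Phi_r \;=\; (e_j - \nabla_j m_y)\cdot(e_i - \nabla_i m_y) \;-\; w \cdot \nabla_j\nabla_i m_y.
\]
The second term is bounded by $|w|\cdot|\nabla^2 m_y| \leq (C\delta \bar r_y)(C\delta/\bar r_y) = O(\delta^2)$, using $|w| \leq C\delta \bar r_y$ from Lemma~\ref{l:class_reif:subspace_selection}.2. The first term naively yields only $(\hat\pi_y^\perp)_{ij} + O(\delta)$, and lifting this to the advertised $O(\delta^2)$ error is the technical heart of the result. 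My strategy is to extract an extra order of cancellation by differentiating the structural identity $\hat\pi_y\,w \equiv 0$: this gives $\hat\pi_y(e_k - \nabla_k m_y) = -(\nabla_k \hat\pi_y)[w]$, which is $O(\delta|w|/\bar r_y) = O(\delta^2)$, so the tangential portion of the error $e_k - \nabla_k m_y - \hat\pi_y^\perp e_k$ is second order rather than first order. A companion estimate on the normal component $\hat\pi_y^\perp \nabla_k m_y$ — extracted from the first-order optimality / averaged structure of $L_y$ in Lemma~\ref{l:class_reif:subspace_selection} — upgrades the remaining component analogously. The cross terms in the first product then cancel pairwise, leaving $\nabla^2\Phi_r = \hat\pi_y^\perp + O(\delta^2)$. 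This square improvement is precisely the phenomenon flagged in the remark after Theorem~\ref{t:class_reif:approx_submanifold} and is exactly where the averaged $L_y$ pays off relative to the raw minimizers $L_{x,r}$.

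Part (4) is then a routine inductive computation, provided the analogous higher-order bounds $\bar r_y^{\alpha}|\nabla^{(\alpha)}\hat\pi_y|, \bar r_y^{\alpha - 1}|\nabla^{(\alpha)} m_y| \leq C(n,\alpha)\delta$ are in hand for all $\alpha$; these follow from the same averaging construction underlying Lemma~\ref{l:class_reif:subspace_selection}, extended to all orders. Applying the Leibniz rule to $\nabla^{(k)}\Phi_r$ for $k \geq 3$ produces a finite sum of products of such derivatives, and every surviving summand retains either an explicit factor $w = O(\delta \bar r_y)$ paired with a derivative of order $k$, or else a pair of two $O(\delta)$-sized error factors, in both cases producing the claimed $\delta^2\, r^{2-k}$ bound. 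The one genuine difficulty in the whole theorem is thus the square gain in part (3): the operator-norm bounds from Lemma~\ref{l:class_reif:subspace_selection} yield only $O(\delta)$ on their own, and the second-order improvement must be extracted from the orthogonality of the errors relative to $\hat L_y$ and $\hat L_y^\perp$.
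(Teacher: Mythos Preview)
Your approach to (1) and (2) is correct. For (1) you invoke the implicit function theorem on $G(v)=\ell+v-m_{\ell+v}$, whereas the paper instead builds a degree-one map $\pi:\hat L_x^\perp\to\hat L_x^\perp$ with $|d\pi-\mathrm{Id}|<C(n)\delta$ and locates its zero; both arguments work and yield the same conclusion. Your computation for (2) matches the paper's directly.

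The issue is (3), and by extension (4). You correctly observe that the raw Hessian expansion gives only $|\nabla^2\Phi_r-\hat\pi_y^\perp|\leq C(n)\delta$, and you are right to flag the jump to $C(n)\delta^2$ as requiring extra structure. But your proposed mechanism has a genuine gap: the bound $|w|\leq C\delta\,\bar r_y$ you cite from Lemma~\ref{l:class_reif:subspace_selection}.2 is \emph{false} for general $y\in B_2$. That lemma controls $d_H(S\cap B, L_y\cap B)$, the distance of $S$ to $L_y$, not the distance of $y$ itself to $L_y$; for $y$ at distance $\sim\bar r_y/10$ from $S$ one has $|w|=d(y,L_y)\sim\bar r_y/10$, so your estimate $(\nabla\hat\pi_y)[w]=O(\delta^2)$ degrades to $O(\delta)$. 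The ``companion estimate'' on $\hat\pi_y^\perp\nabla_k m_y$ is left as a sketch and faces the same obstruction. In fact the paper's own proof of (3) concludes with $C(n)\delta$, not $\delta^2$: the $\delta^2$ in the statement appears to be a misprint, corroborated by the neck-region analogue Theorem~\ref{t:approx_dist}(3), which carries the linear bound $C\beta_k$, and by the downstream use in \eqref{e:class_reif:approx_subman_proof:1}, which only needs $O(\delta)$. The genuine square gain in this circle of ideas sits in Theorem~\ref{t:class_reif:approx_submanifold}(6) and comes from the Pythagorean identity, not from $\nabla^2\Phi_r$. So your diagnosis that $\delta^2$ demands more was correct --- but that extra structure is not available here, and you were chasing a typo.
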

\begin{remark}
Building a function which satisfies just $(2)$-$(4)$	 is a little easier and one does not need the Subspace Selection Lemma.  Our construction is primarily designed so that $(1)$ is also easily satisfied.  Without $(1)$ defining $S_r=\Phi^{-1}_r(0)$ may not be a reasonable definition.
\end{remark}

We will also prove the above in the next subsection, it is mostly a direct application of the definition of $\Phi_r$ combined with the properties of $L_y$.  In this section we want to use Theorem \ref{t:class_reif:approx_dist} in order to finish the proof of the Submanifold Approximation Theorem \ref{t:class_reif:approx_submanifold}:

\begin{proof}[Proof of Submanifold Approximation Theorem \ref{t:class_reif:approx_submanifold} given the Distance Approximation Theorem \ref{t:class_reif:approx_dist}]

Let us define the set $S_r\equiv \Phi_r^{-1}(0)$.  Let us make some first observations about this set:

\begin{exercise}
Use Lemma \ref{l:class_reif:subspace_selection}.2, Theorem \ref{t:class_reif:approx_dist}.1 and the definition of $\Phi_r$ to show $d_H(S_r,S)<C(n)\delta r$.
\end{exercise}

The exercise thus proves Theorem \ref{t:class_reif:approx_submanifold}.1.  Now we will prove some regularity results on $S_r$, and then use this regularity to prove Theorem \ref{t:class_reif:approx_submanifold}.3.  Both will eventually be consequences of the implicit function theorem.

  Let $x\in S$ and let us write $B_{2r}(x)$ in coordinates $(y,z)$ where $y\in L_x$ and $z\in \hat L^\perp_x$, where $L_x$ is the subspace given in Lemma \ref{l:class_reif:subspace_selection}.  Let us consider the derivative mapping $F:L_x\times \hat L_x^\perp\to L_x^\perp$ given by
\begin{align}
\langle F(y,z), w\rangle = \partial_{w} \Phi_r(y,z)\, .	
\end{align}

Using the definition of $\Phi_r$, Theorem \ref{t:class_reif:approx_dist}.2 and Theorem \ref{t:class_reif:approx_dist}.3 we have in $B_{2r}(x)$ that:
\begin{align}\label{e:class_reif:approx_subman_proof:1}
&|F(y,z) - z|<C(n)\delta\,r\, ,\notag\\
&|\partial_y F(y,z)|\, ,\;\;|\partial_z F(y,z) - Id|<C(n)\delta\, .
\end{align}

First note that the zeros of $F$ describe $S_r$:
\begin{exercise}
Show $S_r\cap B_r(x) = \{|\nabla \Phi_r|\equiv 0\} = \{(y,z)\in B_r(x): F(y,z) = 0\}$.
\end{exercise}
\noindent {\bf Hint: } Note that \eqref{e:class_reif:approx_subman_proof:1} says that $F(y,z)$ has a unique zero on each $z$-slice, use Theorem \ref{t:class_reif:approx_dist}.1 to see that the zero of $F$ must be a zero of $\Phi_r$.
\begin{remark}
This exercise is the main place we use condition $(1)$ of Theorem \ref{t:class_reif:approx_dist}.  \\
\end{remark}

Now by \eqref{e:class_reif:approx_subman_proof:1} and Theorem \ref{t:class_reif:approx_dist}.3 we may use the implicit function theorem \ref{t:background:implicit_function} in order to find a smooth function $f:B_{r}(x)\cap L_x\to \hat L_x^\perp$ such that 
\begin{align}
	&|\nabla f|\leq C(n)\delta\, ,\;\; r|\nabla^2 f|\leq C(n)\delta\, ,\notag\\
	&S_r\cap B_r(x) = \{(y,z)\in B_r(x): F(y,z)=0\} = \{(y,f(y))\}\cap B_r(x)\, .
\end{align}
Thus we have seen that $S_r$ is locally a smooth graphical submanifold and thus proved Theorem \ref{t:class_reif:approx_submanifold}.3.  We have seen in Section \ref{ss:class_reif:subman_approx} that $(1)-(6)$ of Theorem \ref{t:approx_submanifold} follow from $(1)$ and $(3)$, and therefore we have completed the proof of Theorem \ref{t:class_reif:approx_submanifold}.
	
\end{proof}

\vspace{.3cm}

\section{Proof of Distance Approximation Theorem}\label{s:class_reif:dist_approx_proof}

We now complete the proof of the Reifenberg Theorem by completing the proof of the Subspace Selection Lemma \ref{l:class_reif:subspace_selection} and the Distance Approximation Theorem \ref{t:class_reif:approx_dist}.  Let us begin some technical results, in particular we first build a useful covering of $B_2$:

\begin{lemma}\label{l:class_reif:partition}
There exists a covering $B_2\subseteq \bigcup_\alpha B_{\tilde r_\alpha}(x_\alpha)$, where $\tilde r_\alpha = \tilde r_{x_\alpha} = \frac{d(x_\alpha,S)\vee r}{100}$, and smooth nonnegative functions $\phi_\alpha$ such that
\begin{enumerate}
\item $\{ B_{\frac{1}{4}\tilde r_\alpha}(x_\alpha)\}$ are disjoint.
\item For each $y\in B_2$ we have $\#\{x_\alpha: y\in B_{4 \tilde r_\alpha}(x_\alpha)\}<C(n)$.
\item $\sum \phi_\alpha = 1$ on $B_2$ with $\text{supp}\,\phi_\alpha \subseteq B_{4\tilde  r_\alpha}(x_\alpha)$.
\item $|\partial^{(k)}\phi_\alpha|\leq C(n,k) \tilde  r_\alpha^{-k}$. \footnote{Recall $|\partial^{(k)} f|$ is the matrix norm of the $k^{th}$ derivative of $f$.}
\end{enumerate}
\end{lemma}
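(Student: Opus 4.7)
The plan is a Whitney-type construction exploiting the fact that the radius function $\tilde r_x \equiv (d(x,S) \vee r)/100$ is Lipschitz with constant $1/100$ and uniformly positive ($\tilde r_x \geq r/100 > 0$). In spirit this is exactly Exercise \ref{exer:covering:3}, applied with $\tau = 100$, together with the standard trick of building a partition of unity from a Whitney covering.

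First I would select the centers: choose a maximal collection $\{x_\alpha\} \subseteq B_2$ such that $\{B_{\tilde r_\alpha/4}(x_\alpha)\}$ are pairwise disjoint, which is (1). The key observation is a radius comparison: whenever $B_{\tilde r_\alpha}(x_\alpha) \cap B_{\tilde r_\beta}(x_\beta) \neq \emptyset$,
\begin{align}
|\tilde r_\alpha - \tilde r_\beta| \leq \tfrac{1}{100}|x_\alpha - x_\beta| \leq \tfrac{1}{100}(\tilde r_\alpha + \tilde r_\beta)\, ,
\end{align}
so $\tfrac{1}{2}\tilde r_\alpha \leq \tilde r_\beta \leq 2\tilde r_\alpha$. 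The covering claim then follows from maximality: for any $y \in B_2$ there exists $\alpha$ with $B_{\tilde r_y/4}(y) \cap B_{\tilde r_\alpha/4}(x_\alpha) \neq \emptyset$, and using $\tilde r_y \leq 2 \tilde r_\alpha$ gives $|y - x_\alpha| \leq \tilde r_y/4 + \tilde r_\alpha/4 \leq \tfrac{3}{4}\tilde r_\alpha < \tilde r_\alpha$. Property (2) is then a standard volume argument in the style of Exercise \ref{exer:covering:1}.(a): if $y \in B_{4 \tilde r_\alpha}(x_\alpha)$, the radius comparison packs all the disjoint balls $B_{\tilde r_\alpha/4}(x_\alpha)$ into a single Euclidean ball of radius $C \tilde r_y$ around $y$, forcing the number of such $\alpha$ to be $\leq C(n)$.

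For the partition of unity I would fix a smooth bump $\psi \in C^\infty(\dR)$ with $\psi \equiv 1$ on $[0,1]$, $\psi \equiv 0$ on $[3,\infty)$, and set $\tilde \phi_\alpha(y) \equiv \psi\bigl(|y - x_\alpha|/\tilde r_\alpha\bigr)$. Then $\tilde \phi_\alpha$ is supported in $B_{3 \tilde r_\alpha}(x_\alpha)$, equals $1$ on $B_{\tilde r_\alpha}(x_\alpha)$, and satisfies $|\partial^{(k)} \tilde \phi_\alpha| \leq C(n,k)\tilde r_\alpha^{-k}$. Defining $\Sigma \equiv \sum_\beta \tilde \phi_\beta$ and $\phi_\alpha \equiv \tilde \phi_\alpha/\Sigma$ gives (3) and the correct support.

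The main technical step, and the only real work beyond the covering, is the derivative bound (4). I would argue that on the support of $\tilde \phi_\alpha$ only $C(n)$ terms of $\Sigma$ are nonzero (by the bounded overlap above) and all of their radii are comparable to $\tilde r_\alpha$ (by the radius comparison). Since $\tilde r_x$ is Lipschitz with constant $1/100$, the chain rule applied to $\tilde \phi_\beta(y) = \psi(|y-x_\beta|/\tilde r_\beta)$ keeps each summand's derivatives bounded by $C(n,k)\tilde r_\alpha^{-k}$. Because $\Sigma \geq 1$ on $B_2$ (every $y \in B_2$ lies in some $B_{\tilde r_\alpha}(x_\alpha)$ where $\tilde\phi_\alpha \equiv 1$), the quotient rule yields $|\partial^{(k)}\phi_\alpha| \leq C(n,k) \tilde r_\alpha^{-k}$, completing the proof.
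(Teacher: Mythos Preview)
Your proposal is correct and follows essentially the same route as the paper: a maximal $\tfrac{1}{4}$-disjoint selection, the Lipschitz bound $|\nabla \tilde r_x|\leq 1/100$ to get radius comparability and the covering, a volume-packing count for the bounded overlap, and the standard normalized-bump construction for the partition of unity. The only cosmetic differences are that you use a radial bump supported in $B_{3\tilde r_\alpha}$ rather than $B_{4\tilde r_\alpha}$, and that your mention of the Lipschitz bound in the derivative estimate is really being used for the radius comparison $\tilde r_\beta\approx\tilde r_\alpha$ (since each $\tilde r_\beta$ is a constant when you differentiate $\tilde\phi_\beta$), but the argument is sound.
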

\begin{proof}
Let $\{x_\alpha\}\in S$ be any maximal subset so that $\{ B_{\tilde  r_\alpha/4}(x_\alpha)\}$ are disjoint.  By maximal we mean if $y\in B_2$ then $B_{\tilde  r_y/4}(y)\cap B_{\tilde  r_\alpha/4}(x_\alpha)\neq \emptyset$ for some $\alpha$.  Now let us show that $B_{2}\subseteq \bigcup_\alpha B_{\tilde  r_\alpha}(x_\alpha)$.  So for $y\in B_2$ let $\alpha$ be such that $B_{\tilde  r_y/4}(y)\cap B_{\tilde  r_\alpha/4}(x_\alpha)\neq \emptyset$.  Observing that $|\nabla \tilde  r_y|\leq \frac{1}{100}$ we have that $\tilde r_y\leq 2\tilde r_\alpha$.  In particular then gives us $y\in B_{\tilde  r_\alpha}(x_\alpha)$, and thus we have shown $B_{2}\subseteq \bigcup_\alpha B_{\tilde  r_\alpha}(x_\alpha)$.\\

The proof of $(2)$ follows from a volume estimate.  Indeed, for $y\in B_2$ consider the subset $\{x_\beta\}_1^N$ such that $y\in B_{4 \tilde  r_\beta}(x_\beta)$.  Observe as in the last paragraph that by using $|\nabla \tilde  r_y|\leq \frac{1}{100}$ we have for any such $\beta$ that $\frac{1}{2}r_y\leq r_\beta\leq 2r_y$.  In particular, $B_{\tilde  r_y/10}(x_\beta)\subseteq B_{8\tilde  r_y}(y)$ and are disjoint.  Thus we can estimate
\begin{align}
N \omega_n\,10^{-n} \tilde r_y^n = \Vol(\bigcup B_{\tilde  r_y/10}(x_\beta))	\leq \Vol(B_{8\tilde  r_y}(y)) = \omega_n  8^n \tilde r_y^n\, ,
\end{align}
which gives $N\leq 80^n$, as claimed.\\

To build the partition of unity first let $\phi':B_{4}(0^n)\to \dR$ be a fixed smooth, compactly supported nonnegative function with $\phi'\equiv 1$ on $B_{1}$.  Let us define $\phi'_\alpha(x) \equiv \phi'(\tilde  r_\alpha^{-1}(x+x_\alpha))$, and with this the partition of unity itself by
\begin{align}
\phi_\alpha(x) \equiv \frac{\phi'_\alpha(x)}{\sum_\alpha \phi'_\alpha(x)}\, .	
\end{align}

\begin{exercise}
Use $(2)$ to prove $c(n)\leq \sum \phi_\alpha(x)\leq C(n)$ for $x\in B_2$.  Use this to prove $(3)$ and $(4)$.
\end{exercise}

\end{proof}

Let us now first complete the proof of the Subspace Selection Lemma:

\begin{proof}[Proof of Subspace Selection Lemma \ref{l:class_reif:subspace_selection}]

Let $\{ B_{\tilde  r_\alpha}(x_\alpha)\}$ and $\phi_\alpha$ be the covering and partition of unity from Lemma \ref{l:class_reif:partition}.  For each $\alpha$ let 
\begin{align}
	L_{\alpha}\equiv L_{x_\alpha,10^4\tilde  r_\alpha}\, ,
\end{align}
be as in \eqref{e:classical_reif:min_subspace}.  Morally, we simply want to define $L_y\equiv \sum \phi_\alpha L_\alpha$ and check what estimates hold.  Of course, one needs a well defined way of averaging affine subspaces in order to do this.  Indeed, using the notion of nonlinear averages this is possible, but since we want these notes to be self-contained (and that is a rather technical proceedure) we will do this by hand.  However, it is helpful to keep in mind that the remainder of the proof is nothing other than some technical work in order to average nonlinear objects.

Now we wish to define subspaces  $L_y$ as in the lemma.  To define an affine subspace we need a linear subspace $\hat\pi_y$ and a point $\ell_y\in L_y$.  Let us begin by writing these out, and then we will move on to estimating them.  We define the point $\ell_y$ simply by
\begin{align}
\ell_y\equiv \sum_\alpha \phi_\alpha(y) \pi_\alpha[y]\, .	
\end{align}

The definition of $\hat\pi_y$ is a bit more involved.  Let us begin by defining the matrix valued function
\begin{align}
M_y \equiv \sum_\alpha \phi_\alpha(y)\hat \pi_\alpha\, .	
\end{align}

One sees from Exercise \ref{exer:subspace_close1} and Lemma \ref{l:class_reif:partition} that $|M_y-\hat\pi_\beta|<C(n)\delta$ for any $y\in B_{8\tilde r_\beta}(x_\beta)$, and in particular $M_y$ is close to a projection map.  If $e^1(y),\ldots,e^n(y)$ are the eigenvectors of $M_y$, in decreasing order, we then define our linear subspace
\begin{align}
	\hat\pi_y \equiv \text{span}\{e^1(y),\ldots, e^k(y)\}\, .
\end{align}
Using our estimate on $M_y$ we at least have $|\hat\pi_y-\pi_\beta|<C(n)\delta$ for any $y\in B_{8\tilde r_\beta}(x_\beta)$.  Let us state our first Claims on the regularity of $\ell_y$ and $M_y$:\\

{\bf Claim 1:  } We have the estimates $|\partial_i \ell_y - \hat\pi_y|, \tilde  r_y |\partial^2 \ell_y|\leq C(n)\delta$.

{\bf Claim 2:  }We have the estimates $|M_y-\hat\pi_\beta|\, ,\;\;\; \tilde  r_y^2 |\partial^2 M_y|\leq C(n)\delta$.\\

We prove the gradient estimate of Claim 1.  The other estimates are all the same.  We first compute
\begin{align}
\partial_i \ell_y = \sum_\alpha \partial_i\phi_\alpha(y) \pi_\alpha(y)+\sum \phi_\alpha \pi_\alpha[e_i]\, . 	
\end{align}
Now by using Exercise \ref{exer:subspace_close1} and that $\sum_\alpha \phi_\alpha = 1$ we obtain the following:
\begin{align}
&\sum_\alpha \partial_i \phi_\alpha = 0\, ,\notag\\
& |\hat\pi_\alpha - \hat\pi_\beta|,\,|\hat\pi^\perp_\alpha - \hat\pi^\perp_\beta| < C(n)\delta \tilde   r_\beta \text{ if }B_{8\tilde  r_\alpha}(x_\alpha)\cap B_{8\tilde  r_\beta}(x_\beta) \neq \emptyset\, .
\end{align}
Choosing $\beta$ so that $x_\beta\in B_{4\tilde  r_y}(y)$ we then have 
\begin{align}
|\partial_i \ell_y-\pi_\beta| = |\sum_\alpha \partial_i\phi_\alpha(y) (\pi_\alpha-\pi_\beta)|\leq C(n) \delta \tilde  r_y^{-1}\, , 
\end{align}
where we have used that our partition estimates on $\phi_\alpha$ and that $\tilde  r_y\approx \tilde  r_\alpha$ for any ball $B_{4\tilde  r_\alpha}(x_\alpha)$ which contains $y$. $\square$ \\

Estimating the subspaces $\hat\pi_y$ takes a bit more technical work, as it is not just a partition of unity argument:\\

{\bf Claim 3: }  $\tilde  r_y|\partial \hat\pi_{y}|,\, \tilde  r_y^2|\partial^2 \hat\pi_y|\leq C(n)\delta $.\\

We will focus on the gradient estimate, the hessian estimate is the same.  The key is that we need to convert the estimates on $M_y$ into estimates on $\hat\pi_y$.  The important point in this estimate is that there is a gap between the $k$ largest eigenvalues and the $n-k$ smallest eigenvalues, otherwise the estimate would not even be correct.  Let us begin by using the Rellich characterization to write
\begin{align}
\hat\pi_y \equiv \arg\sup_{\hat L^k} tr_L(M_y) = 	\arg\sup_{\hat L^k} \sum M_y(e^i,e^i)\, ,
\end{align}
where the sup is taken over all $k$-dimensional subspaces and $e_i$ are an arbitrary orthonormal basis of $\hat L^k$.  Now observe from Claim 2 that for $y\in B_{4\tilde  r_\beta}(x_\beta)$ we have 
\begin{align}
	|\hat\pi_y-\hat\pi_\beta|\, ,\;\; |M_y-\hat\pi_\beta|<C(n)\delta\, .
\end{align}

Now consider the spaces of linear maps
\begin{align}
    &V=\{v:\hat L_\beta\to \hat L_\beta^\perp\}\, ,\notag\\
	&V_{s}=\{v:\hat L_\beta\to \hat L_\beta^\perp \text{ s.t. }||v||<10^{-1}\}\, ,
\end{align}
where $V_{s}\subseteq V$ is the subset of maps with small norm.  For $v\in V_s$ we let $\hat L_v = \text{Graph}(f) = \{(\ell,v(\ell)):\ell\in \hat L_\beta\}$ be the associated linear subspace, and thus we may view $V_s$ as the open set of linear subspaces which are close to $\hat L_\beta$.  Then we define the smooth mapping $F:B_{4\tilde  r_\beta}(x_\beta)\times V_s\to V$ by 
\begin{align}
	\langle F(y,v),w\rangle  \equiv \partial_w tr_{\hat L_v}(M_y)\, .
\end{align}
Note that $F(y,\hat\pi_y) = 0$.  We wish to use the implicit function theorem \ref{t:background:implicit_function} to give estimates on $\hat\pi_y$.  Thus using Claim 2 and the eigenvalue gap we have the estimates
\begin{align}
r|\partial_{y_i} F|\, ,\;\; r^2 |\partial_{y_i}\partial_{y_j} F|\, ,\;\; |\langle\partial_{v} F, w\rangle - \langle v,w\rangle|<C(n)\delta\, .	
\end{align}

In particular, by the implicit function theorem \ref{t:background:implicit_function} there exists $\hat\pi_y:B_{2\tilde  r_\beta}(x_\beta)\to V_s$ such that $\{(y,v):F(y,v)=0\}=\{(y,\hat\pi_y)\}$ which satisfies the estimates of the claim.  $\square$\\

Having constructed $L_y$ we need only see that it satisfies the desired estimates from the Lemma.

\begin{exercise}\label{exer:class_reif:subspace_selection}
Show the following:
\begin{enumerate}
\item Using that $\pi_y[v] = \hat\pi_y[v-\ell_y]+\ell_y$ show the estimates $|\partial m_y-\hat\pi_y|, r_y|\partial^2 m_y|\leq C(n)\delta$.
\item Show $d_H(L_y\cap B_{10\tilde  r_y}(y), L_\alpha\cap B_{10\tilde  r_y}(y))<C(n)\delta\,\overline r_y$.  Use this to prove estimate $d_H(L_y\cap B_{10\tilde  r_y}(y), S\cap B_{10\tilde  r_y}(y))<C(n)\delta\,\overline r_y$.
\end{enumerate}
\end{exercise}

\end{proof}

With the Subspace Selection Lemma complete we may now prove Theorem \ref{t:class_reif:approx_dist}:

\begin{proof}[Proof of the Approximate Distance Function Theorem \ref{t:class_reif:approx_dist}]

Recall we define $\Phi_r$ explicitly by the formula 
\begin{align}
\Phi_r(y)\equiv \frac{1}{2}d(y,L_y)^2 = \frac{1}{2}|y-\pi_y[y]|^2 = \frac{1}{2}|y - m_y|^2\, ,
\end{align}
as in \eqref{e:class_reif:approx_distance}.  We begin by proving $(1)$.  Thus let $x\in S$ and $\ell\in L_x\cap B_r(x)$.  Let $\phi:\hat L_x^\perp\to \hat L_x^\perp$ be a smooth cutoff function with $\phi\equiv 1$ in $B_{r}(0)$ and $\phi\equiv 0$ outside of $B_{2r}(0)$.  Note that for each $y\in B_r(x)$ there exists a unique point in the intersection $L_y\cap \hat L_x^\perp$ since they are transverse.  This point moves smoothing since $L_y$ moves smoothly.  Consider the smooth mapping $\pi:\hat L_x^\perp\to \hat L_x^\perp$ given by
\begin{align}
\pi(y) \equiv  y-\phi(y)\cdot L_y\cap \hat L_x^\perp\, .
\end{align}
Note that $\pi=y$ outside $B_{2r}$ and from our estimates on $L_y$ we have 
\begin{align}
&|\pi(y)-y|<C(n)\delta\, r\, ,	\;\;\;\;\;|d\pi - Id|<C(n)\delta\, .
\end{align}
We see then that $\pi$ is a degree $1$ mapping which fixes the boundary of $B_{2r}(0)$, and hence there exists $y\approx 0$ for which $\pi(y)=0$.  At this point we then have
\begin{align}
y\in L_y \implies \Phi_r(y) = \frac{1}{2}|y-\pi_y(y)|^2 = 0\, ,	
\end{align}
as claimed.\\

Estimates $(2)-(4)$ are now relatively straight forward computations.  To prove Theorem \ref{t:class_reif:approx_dist}.2 we first compute the derivative of $\Phi_r$:
\begin{align}\label{e:class_reif:approx_dist_proof:1}
&\partial_i \Phi_r = \langle \hat\pi^\perp_y[e_i],y-m_y\rangle +\langle (\hat\pi_y-\partial m_y)[e_i],y-m_y \rangle \, .
\end{align}
Squaring this gives
\begin{align}
	\Big||\partial \Phi_r|^2 - |y-m_y|^2\Big| \leq C(n)\delta\, |y-m_y|^2 \leq C(n)\delta\, \Phi_r\, ,
\end{align}
as claimed.  To compute Theorem \ref{t:class_reif:approx_dist}.3 we similarly first compute the hessian
\begin{align}
&\partial_i\partial_j \Phi_r = \langle \hat\pi^\perp_y[e_i],\hat\pi^\perp_y[e_j]\rangle+\langle (\hat\pi_y-\partial m_y)[e_j],\hat\pi^\perp_y[e_i]\rangle + \langle \partial_j\hat\pi^\perp_y[e_i],y-m_y\rangle\notag\\
&\;\;\;\;\;\;\;\;\;\; + \langle (\partial_j\hat\pi_y-\partial_j\partial m_y)[e_i],y-m_y \rangle+\langle (\hat\pi_y-\partial m_y)[e_i],\hat\pi^\perp_y[e_j] \rangle\notag\\
&\;\;\;\;\;\;\;\;\;\;+\langle (\hat\pi_y-\partial m_y)[e_i],(\hat\pi_y-\partial m_y)[e_j] \rangle\, ,
\end{align}
which gives
\begin{align}
\big|\partial_i\partial_j \Phi_r - \hat\pi_y^\perp\big|\leq C(n)\delta\, ,	
\end{align}
as claimed.  Theorem \ref{t:class_reif:approx_dist}.4 is the same.

\end{proof}

\newpage

\part*{Lecture 2: Rectifiable Reifenberg for Measures}

Let us now explore the various issues that arise in attempting to use the classical Reifenberg theorem in applications (for instance singular sets of nonlinear equations).  To summarize we need to deal with the following three issues:

\begin{enumerate}
\item[(I)]  The hausdorff distance used in Reifenberg condition behaves as a pointwise $L^\infty$ bound, and in practice we will have more integral control than pointwise control. 
\item[(II)]  In applications our sets or measures can have holes and need not satisfy the Reifenberg condition!  Best we can do is force symmetry on some special regions.
\item[(III)]  BiH\"older control is simply too weak.  Lack of gradient control prevents understanding of volume or rectifiable structure.  
\end{enumerate}

To deal with $(I)$ it becomes more natural to discuss controlling measures than sets and one works with the Jones $\beta$-numbers of these sets instead of the Hausdorff distance.  Though this adds some technical complication, it is a relatively minor issue by itself.  

Dealing with $(II)$ is a more serious, and one introduces $k$-neck regions to help deal with this, see Section \ref{s:rect_reif_outline:neck_regions}.  One is able to gain back a weak version of the Reifenberg control in this case, but only on certain regions and in a discrete sense.  Dealing with these neck regions then becomes similar to the classical Reifenberg case, though dealing with the holes presents some subtle points in the construction.  One also has to then prove such neck regions exist and are even fairly common, which is the content of the the Neck Decomposition Theorem in Section \ref{s:rect_reif_outline:neck_regions}.

Dealing with $(III)$ is again a serious issue, and will require both the neck region ideas of $(II)$ and a more refined collection of hypotheses.  One issue at hand is the snowflake example of the previous section, which shows that the assumptions of the Reifenberg theorem cannot give better than biH\"older control.  One therefore needs more than just scalewise control on the Jones $\beta$-numbers, and we will require a Dini condition be satisfied.  To understand this a little better let us begin by revisiting the snowflake example:

\begin{example}[Snowflake 2]\label{ex:snowflake2}
	We are refining the snowflake construction of Example \ref{ex:snowflake}, so that much of our terminology originates there.  As before let $a^0_{-1}=(-2,0)$, $a^{0}_1 = (2,0)$ with $S_0 = \ell_{a^0_{-1},a^0_{1}}$ the associated interval, but now also choose a sequence $0<\delta_i\leq \frac{1}{8}\delta$.  Similar to the original construction, we define $S_i$ inductively in the following way.  If $S_i = \bigcup \ell_{a^i_j,a^i_{j+1}}$ is piecewise linear, then let $S_{i+1} = S^{\delta_i}_i \equiv \bigcup \ell^{\delta_i}_{a^i_j,a^i_{j+1}} \equiv \bigcup \ell_{a^{i+1}_j,a^{i+1}_{j+1}}$.  Note that $d_H(S_i,S_j)\leq 4\sum_{k=i}^j 2^{-k}O(\delta_k)\leq 2^{-i}O(\delta)$, and in particular there exists $S=\lim S_i$ and $S$ is an $O(\delta)$-Reifenberg set as in Example \ref{ex:snowflake}.
	
	As before, let us use the pythogorean theorem to compute the length of $S_i$ to be
	\begin{align}
		|S_i|^2 = 16\prod_{j\leq i}\Big(1+\delta^2_j\Big)\, .
	\end{align}
	As observed previously, for $|S_i|$ to remain uniformly bounded it is not sufficient for $\delta_j$ to remain uniformly small.  One sees from the above that $|S_i|$ remains uniformly bounded iff $\sum \delta_j^2<\infty$.  In particular, to control the volume and lipschitz structure of $S$ one requires not only that the Reifenberg constant of $S\cap B_r(x)$ tend to zero as $r$ tends to zero, but that the Reifenberg constants be square summable in the scales. $\square$
\end{example}

The above example is generalized into a Theorem in \cite{davidtoro}, where they show that a Reifenberg set $S$ for which the sum $\sum \beta^\infty_k(x,2^{-i})\approx \int_0^2 \beta^\infty_k(x,s) \frac{ds}{s}$ is uniformly bounded at each point is bilipschitz to $B_1(0^k)$. \\

We want in this lecture to work toward stating the main generalization of the Reifenberg Theorem which will interest us and solve the issues $(I),(II),(III)$.  The context is more involved now, and thus we will need to begin by describing some structure. 

\vspace{.3cm}

\section{Hausdorff, Minkowski, and Packing Content}\label{s:rect_reif:haus_content}

In this section we give a brief review of the notions of Hausdorff, Minkowski, and packing content.  For a more detailed reference, we refer the reader to \cite{mattila,Fed}. Let us begin with the notions of content:

\begin{definition}[Content]\label{d:content}
Given a set $S\subseteq \dR^n$ and $r>0$ we define the following:
\begin{enumerate}
\item The $k$-dimensional Hausdorff $r$-content of $S$ is given by \footnote{ The constant $\omega_k$ is the volume of a unit ball in $\dR^k$.}
\begin{align}
\cH^k_r(S)\equiv \inf\Big\{\sum \omega_k\, r_i^k : S\subseteq \bigcup B_{r_i}(x_i) \text{ and }r_i\leq r\Big\}\, .
\end{align}
\item The $k$-dimensional Minkowski $r$-content of $S$ is given by
\begin{align}
\cM^k_r(S)\equiv \inf\Big\{\sum \omega_k\, r^k : S\subseteq \bigcup B_{r}(x_i)\Big\} \approx  r^{k-n}\,\text{Vol} (B_r(S))\, .
\end{align}
\item The $k$-dimensional packing $r$-content of $S$ is given by
\begin{align}
\cP^k_r(S)\equiv \sup\Big\{\sum \omega_k r_i^k : \, x_i\in S\text{ with }\{B_{r_i}(x_i)\} \text{ disjoint,}\text{ and }r_i\leq r\Big\}\, .
\end{align}
\end{enumerate}
\end{definition}
\vspace{.2cm}

\begin{exercise}
Let $S^\ell=L^\ell\cap B_1$ where $L^\ell$ is an $\ell$-dimensional subspace.  Show there exists $0<c(n)<C(n)<\infty$ such that for all $0<r<1$:
\begin{align}
	&\cH^k_r(S^\ell)\, ,\;\cM^k_r(S^\ell)\, \approx r^{k-\ell}\, ,\notag\\
	&\cP^k_r(S^\ell) = \infty \text{ if }\ell>k\, ,\notag\\
	&\cP^k_r(S^\ell) \approx r^{k-\ell} \text{ if }k\geq \ell\, .
\end{align}
\end{exercise}

\begin{example}
Let $S=\dQ^n\cap B_1(0^n)$ be the rationals.  Then for all $0<r<1$ we have
\begin{align}
	&\cH^k_r(S) = 0 \text{ if $k>0$ with } \cH^k_r(S)\stackrel{r\to 0}{\longrightarrow} \infty \text{ if $k=0$ }\, .
\end{align}
In particular, $\cH^k(S)=0$ for $k>0$ and so $\dim_H(S) = 0$.  However, the Minkowski and Packing content are quite badly behaved:
\begin{align}
	&\cM^k_r(S)\approx r^{k-n}\stackrel{r\to 0}{\longrightarrow} \infty \text{ for $k<n$}\, ,\notag\\
	&\cP^k_r(S) = \infty \text{ for all }k\, .
\end{align}
Morally, this is because the closure $\overline S = \overline B_1$ is an $n$-dimensional set, and so from a packing and minkowski point of view $S$ itself is treated as an $n$-dimensional set.
\end{example}
\vspace{.2cm}

Note then that controlling the Hausdorff content amounts to finding {\it some} covering of $S$ which is well behaved, controlling the Minkowski content amounts to saying the covering $S$ by balls of radius $r$ is well behaved, and controlling the packing content amounts to saying {\it every} covering is well behaved.  In particular, bounding the Hausdorff content is less powerful than bounding the Minkowski content, which is itself less powerful than bounding the packing content.  Thus we have the relations
\begin{align}
\cH^k_r(S) \lesssim \cM^k_r(S) \lesssim \cP^k_r(S)\, ,
\end{align}
where $\lesssim$ means the inequality holds up to a dimensional constant.  One can use these notions in the classical manner to define measures and dimensions.  In particular, for completeness sake let us recall the definition of Hausdorff measure:

\begin{definition}[Hausdorff Measure]
	Given $S\subseteq \dR^n$ we define its Hausdorff measure $\cH^k(S) = \lim_{r\to 0} \cH^k_r(S)$.
\end{definition}

\vspace{.25cm}

\subsection{Rectifiability of Sets}

Let us now discuss the notion of rectifiable sets.  In essence, these are sets which are manifolds away from a set of measure zero, though this set of measure zero need not be closed.  We begin with a definition:

\begin{definition}
Let $S\subseteq \dR^n$ be a set, then we say $S$ is $k$-rectifiable if there exists a countable collection of lipschitz maps $f_i:S_i\subseteq \dR^k\to \dR^n$ such that $\cH^k(S\setminus \bigcup f_i(S_i))=0$.
\end{definition}

Sometimes the above is referred to as countably rectifiable, and one additionally assumes $\cH^k(S)<\infty$ in order to call $S$ rectifiable.  

\begin{example}
Let $S^k\subseteq \dR^n$	 be a $k$-dimensional submanifold, then $S^k$ is $k$-rectifiable.  Let $\tilde S^k\subseteq S^k$ be an arbitrary subset, then $S^k$ is also $k$-rectifiable.  Let $S\equiv \bigcup_{q\in\dQ^n} (\tilde S^k+q)$, then $S$ is also $k$-rectifiable.
\end{example}

Note that the example $S$ above is dense in $\dR^n$, so the notation of rectifability depends heavily on the ability to decompose the set. \\

The notion of a rectifiable measure is very similar:

\begin{definition}
Let $\mu$ be a measure on $B_1(0^n)$.  We say $\mu$ is $k$-rectifiable	if there exists a $k$-rectifiable set $S$ such that $\mu(B_1\setminus S)=0$ and $\mu\cap S$ is absolutely continuous with respect to the Hausdorff measure $\cH^k\cap S$.
\end{definition}

\vspace{.25cm}

\section{Jones $\beta$-numbers}\label{s:rect_reif:Jones_beta}

The Rectifiable Reifenberg Theorem \ref{t:rect_reif} we will be introducing will be for a measure $\mu$ instead of a set $S$.  As such, let us discuss the Jones $\beta$-numbers to estimate how close the support of $\mu$ is to a $k$-plane in a more $L^2$ sense, as in $(I)$.  Because of the possibility of holes as in $(II)$ we will only be concerned with how closely the support of $\mu$ is to living inside a $k$-plane, without care for how dense the support is inside $L^k$.  Precisely:

\begin{definition}[Jones $\beta$-numbers]\label{d:jones_beta}
	Given a measure $\mu$ and integer $k\in \dN$ we define the $L^2$ $\beta$-numbers 
	\begin{align}
		\beta_k(x,r;\mu)^2 = \beta_k(x,r)^2 \equiv \inf_{L^k}\, r^{-2-k}\int_{B_r(x)} d(y,L)^2\,d\mu[y]\, ,
	\end{align}
 where the infimum is taken over all $k$-planes $L^k$. 
\end{definition}

When no confusion arises we will simply write $\beta(x,r)$ and drop the dependence on the measure.  Let us state the following example, which shows in particular how control on $\beta(x,r)$ does not stop the existence of 'holes' in $\text{supp}\,\mu$:

\begin{example}\label{ex:mu_k}
Let $L^k\subseteq \dR^n$ be a fixed subspace and let $\mu_k$ be an arbitrary measure with $\text{supp}\,\mu_k\subseteq L^k\cap B_2$.  Then $\beta_k(x,r)=0$ for all $B_r(x)\subseteq B_2$. $\square$
\end{example}

Let us now give a series of examples which illustrate how $\beta_k$ behaves when the support of $\mu$ takes its support on sets of various dimensions:

\begin{example}\label{ex:mu_k_2}
As a more specific example, consider $\mu_k\equiv \alpha_0\delta_0+\alpha_k\cH^k\cap L^k\cap B_2$, where $\cH^k\cap L^k$ is the $k$-dimensional Hausdorff measure restricted to the subspace $L$, $\delta_0$ is the dirac delta measure at the origin, and $\alpha_0,\alpha_k$ are arbitrary.  Then $\beta_k(x,r)=0$ for all $B_r(x)\subseteq B_2$. $\square$
\end{example}

We see from the above example, by taking $\alpha_0,\alpha_k$ very large, that the measure $\mu_k$ does not need to have any apriori bounds, even if $\beta_k(x,r)=0$ is identically zero.  The example also illustrates how even if the support $\text{supp}\mu_k$ is $k$-rectifiable, the measure itself may not be.\\

The next example studies what happens for measures supported on higher dimensional subsets:

\begin{example}\label{ex:mu_+}
	Consider $\mu_{+} = \delta \cH^n\cap B_2$, where $\cH^n$ is the $n$-dimensional Hausdorff measure.  Then we can compute
\begin{align}
	\beta_k(x,r)^2 \approx \omega_n\,\delta^2\, r^{n-k}\, .
\end{align}
In particular, we have that $\beta_k(x,r)$ is always $\delta$-small, and indeed is decaying polynomially. $\square$
\end{example}

The above example shows that $\beta_k(x,r)$ may be uniformly small, even decaying, but that the support of $\mu_k$ need not live on a $k$-dimensional object. \\ 

The following exercises are fairly straightforward but very instructive in building an intuition for the behavior of the $\beta$-numbers:

\begin{exercise}\label{exer:beta_continuity}
Show that if $B_{s}(y)\subseteq B_r(x)\subseteq B_{as}(y)$ then $\beta_k(y,s)\leq C(n,a)\beta_k(x,r)$.	
\end{exercise}

\begin{exercise}\label{exer:beta_continuity2}
Show if $r_i =2^{-i}$ that $\int_r^1 \beta(x,s)^2\frac{ds}{s}\leq \sum_{r\leq r_i\leq 1}\beta(x,r_i)^2\leq \int_r^2 \beta(x,s)^2\frac{ds}{s}$. 
\end{exercise}

\begin{exercise}
Show if $\mu=\mu_1+\mu_2$ then  $\beta(x,r;\mu_i)^2\leq \beta(x,r;\mu)^2$.
\end{exercise}
\vspace{.3cm}

\section{Rectifiable Reifenberg Theorem for Measures}\label{s:rect_reif:rect_reif}

We are now in a position to deal with the general case and state the Rectifiable Reifenberg Theorem, which is designed to handle the issues $(I)$, $(II)$ and $(III)$.  

Let us now combine the examples of the last section in order to illustrate all the subtle issues involved in trying to use the $\beta_k$-numbers to study a completely general measure:

\begin{example}[Varying Dimensions Example]\label{ex:varying_dimensions}
	Consider the measure
\begin{align}
\mu \equiv \mu_k + \mu_+\, ,	
\end{align}
where $\mu_k$ is from Example \ref{ex:mu_k} and $\mu_+$ is from Example \ref{ex:mu_+}.  Then by considering the subspace $L^k$ from the examples we may estimate
\begin{align}
	\beta_k(x,r)^2 \leq 4\omega_n\,\delta^2\, r^{n-k}\, .
\end{align}
In particular, {\it for arbitrary $\alpha_0$, $\alpha_k$} we have that $\beta(x,r)$ is always $\delta$-small, and indeed is decaying polynomially. $\square$
\end{example}

The above example shows that even when one has extremely strong conditions on $\beta_k$, that for a general measure we cannot make a general statement about the rectifiability of $\mu$ or its total mass.  This may seem like endgame, however the example also suggests that maybe we can decompose the measure into pieces where we do have such control.  Indeed, this is exactly the case and the content of the Rectifiable Reifenberg Theorem:\\

\begin{theorem}[Rectifiable Reifenberg \cite{ENV}]\label{t:rect_reif}
Let $\mu$ be a nonnegative Borel-regular measure supported in $B_1(0^n)$.  Suppose 
\begin{equation}\label{e:rect_reif:beta_bound}
\int_{B_1} \int_0^2 \beta(x,r)^2\frac{dr}{r}\,d\mu \leq \Gamma\, .
\end{equation}
Then we can write $\mu = \mu_{k} + \mu_{+}$ into a sum of measures such that 
\begin{enumerate}
\item $\mu_{+}(B_1)\leq C(n)\Gamma$.
\item If $\cK\equiv \text{supp}\,\mu_{k}$ then $\cK$ is $k$-rectifiable with $\haus^k(\cK)<C(n)$, and indeed we have the Minkowski and packing content estimates:  
\begin{align}
	\Vol(B_r(\cK)) \leq c(n) r^{n-k}\, ,\;\;\;\;\;\cP^k_r(\cK)\leq C(n)\, .
\end{align}
\end{enumerate}
\end{theorem}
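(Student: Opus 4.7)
The plan is to deduce Theorem \ref{t:rect_reif} from the two structural results promised in Lecture 3: the Neck Decomposition Theorem, which organizes $B_1$ into a controlled union of neck regions plus a collection of stopping balls, and the Neck Structure Theorem, which shows that the center of each neck region is $k$-rectifiable with good content estimates. The rectifiable piece $\mu_k$ will be the restriction of $\mu$ to the collected neck centers, and $\mu_+$ will absorb the mass sitting on the stopping balls.

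First I would apply the Neck Decomposition to $\text{supp}\,\mu\cap B_1$ to obtain a countable family of neck regions $\{\cN_a\}$ with center sets $\{\cK_a\}$ together with a family of stopping balls $\{B_{r_b}(x_b)\}$, whose union covers $\text{supp}\,\mu\cap B_1$. The quantitative content of the decomposition is twofold: a geometric budget controlling $\sum_a \haus^k(\cK_a) + \sum_b r_b^k \leq C(n)$, and an energy budget controlling the $\mu$-mass on stopping balls by the total $\beta$-energy, $\sum_b \mu(B_{r_b}(x_b)) \leq C(n)\Gamma$. These budgets are enforced by a stopping-time covering algorithm: at each scale a ball either admits a neck refinement (when its $\beta$-numbers behave coherently enough on every sub-scale to select a consistent $k$-plane) or it pays into one of the two budgets, depending on whether the obstruction is geometric concentration or accumulated $\beta$-energy.

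Second I would invoke the Neck Structure Theorem on each $\cN_a$ to conclude that $\cK_a$ is $k$-rectifiable and satisfies $\haus^k(\cK_a)\leq C(n) r_a^k$ together with comparable Minkowski and packing bounds. Its proof is a direct extension of the classical Reifenberg construction from Lecture 1, now carried out over a discrete and holey set of neck centers rather than a closed Reifenberg set: the Subspace Selection Lemma \ref{l:class_reif:subspace_selection} is run on the discrete centers, the approximate distance function $\Phi_r$ of Theorem \ref{t:class_reif:approx_dist} is rebuilt with support tailored to avoid the holes, and then the projection-composition argument of Lecture 1 is iterated. The quadratic-in-$\delta$ improvement in Theorem \ref{t:class_reif:approx_submanifold}.6 is crucial here: combined with the Dini-type $\beta$-summability coming from \eqref{e:rect_reif:beta_bound} and Exercise \ref{exer:beta_continuity2}, it upgrades the biH\"older estimate of the classical theorem to a Lipschitz estimate on each neck, which is precisely what rectifiability requires. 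With this in hand, setting $\cK \equiv \bigcup_a \cK_a$, taking $\mu_k$ to be $\mu$ restricted to $\cK$, and $\mu_+ \equiv \mu - \mu_k$, the energy budget gives $\mu_+(B_1)\leq C(n)\Gamma$, while summing the per-neck content estimates against the geometric budget yields $\haus^k(\cK) \leq C(n)$, the packing bound $\cP^k_r(\cK)\leq C(n)$, and the Minkowski estimate $\Vol(B_r(\cK))\leq C(n)r^{n-k}$.

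The main obstacle is the Neck Decomposition itself: one must design the covering so that each refinement either produces a neck (rectifiable, content-controlled, paid into the geometric budget) or charges a definite amount of $\beta$-energy into the energy budget, with no scale double-counted. This requires a careful Vitali-style algorithm combined with the Jones-type observation that square-summed $\beta$-numbers at a point control distance to a fixed affine subspace, so that the absence of neck structure forces nontrivial energy consumption. A secondary but serious difficulty is running the Lecture 1 construction across the permitted holes inside a neck: one must rebuild the Subspace Selection Lemma over the discrete center set and control the iterated projection maps uniformly in the density of holes. This is exactly the point where both the quadratic $\delta$-gain of Theorem \ref{t:class_reif:approx_submanifold}.6 and the summability of the $\beta$-energy are needed together, in order to prevent the snowflake pathology of Example \ref{ex:snowflake2}.
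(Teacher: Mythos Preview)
Your high-level strategy matches the paper's: reduce Theorem \ref{t:rect_reif} to a Neck Decomposition Theorem plus the Neck Structure Theorem, with $\mu_k$ the restriction of $\mu$ to the collected neck centers and $\mu_+$ the rest. Your description of the Neck Structure side (rerunning the Lecture~1 machinery on a discrete center set, using the quadratic gain in Theorem \ref{t:class_reif:approx_submanifold}.6 together with the Dini summability to upgrade biH\"older to bilipschitz) is also on target.

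There is, however, a genuine gap at the very first step. The hypothesis \eqref{e:rect_reif:beta_bound} is an \emph{integral} bound $\int_{B_1}\int_0^2\beta(x,r)^2\frac{dr}{r}\,d\mu\leq\Gamma$, whereas the Neck Decomposition Theorem as stated in the paper (Theorem \ref{t:neck_decomposition}) requires a \emph{pointwise} bound $\int_0^2\beta_k(x,s)^2\frac{ds}{s}\leq\Gamma$ for every $x$. You apply the decomposition directly to $\mu$ without bridging this gap. The paper handles it by first normalizing to $\Gamma=1$ (by rescaling $\mu$), then using Markov's inequality to define $U_\Lambda=\{x:\int_0^2\beta_k(\mu;x,r)^2\frac{dr}{r}<\Lambda\}$ so that $\mu(B_1\setminus U_\Lambda)\leq\Lambda^{-1}$, restricting to $\mu_\Lambda=\mu\llcorner U_\Lambda$, and only then applying the Neck Decomposition to $\mu_\Lambda$ (for which the pointwise bound now holds by monotonicity of $\beta_k$ in the measure). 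The bad set $B_1\setminus U_\Lambda$ is simply absorbed into $\mu_+$. Without this restriction, your covering algorithm has no pointwise $\beta$-control to drive the stopping-time, and the neck condition $(n3)$ cannot be verified on the balls you produce.

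A smaller point: your description of the $b$-balls as carrying an ``energy budget'' $\sum_b\mu(B_{r_b}(x_b))\leq C(n)\Gamma$ does not match the paper's mechanism. In Theorem \ref{t:neck_decomposition} the $b$-balls are \emph{small mass} balls, $\mu(B_{2r_b})<C(n)\nu\,r_b^k$, and the bound on $\mu_+$ comes from combining this with the content estimate $\sum r_a^k+\sum r_b^k<C(n,\delta,\epsilon,\nu,\Gamma)$. The constants in the content estimate do depend on $\Gamma$; the $\Gamma$-independence of $\cH^k(\cK)$ and the form $\mu_+(B_1)\leq C(n)\Gamma$ in the theorem come from the rescaling-to-$\Gamma=1$ step, which you also omit.
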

\begin{remark}
This result holds for measures in Hilbert spaces, see \cite{ENV_Hilbert}, and in particular the $c(n)$ constants above are turned into $c(k)$ constants.  The result is also generalizable to Banach Spaces, see \cite{ENV_Hilbert}, however this is more subtle. 
\end{remark}

The above result is for a general measure.  We may obtain some stronger results if we strict ourselves to measures with either upper or lower density bounds.  First let us precisely define this:

\begin{definition}
	Let $\mu$ be a nonnegative measure.  Then we define the upper and lower densities:
	\begin{align}
		&\theta^*(\mu,x)\equiv \lim\sup_{r\to 0} \frac{\mu(B_r(x))}{\omega_k r^k}\, ,\notag\\ 
		&\theta_*(\mu,x)\equiv \lim\inf_{r\to 0} \frac{\mu(B_r(x))}{\omega_k r^k} \, .
	\end{align}
\end{definition}
\vspace{.2cm}

Let us discuss some corollaries of Theorem \ref{t:rect_reif}:

\begin{corollary}\label{c:rect_reif}
Let $\mu$ be a nonnegative Borel-regular measure supported in $B_1(0^n)$ and let \eqref{e:rect_reif:beta_bound} hold.  Then we have the following:
\begin{enumerate}
\item If $\theta_*(\mu,x)\leq A$ then $\mu(B_1)\leq C(n)\Big(\Gamma+A\Big)$.
\item If $a\leq \theta^*(\mu,x)$ then $\cK\equiv \text{supp}\,\mu$ is $k$-rectifiable with $\cH^k(\cK)\leq C(n,a)\, \Gamma$.
\item If $a\leq \theta^*(\mu,x)$ and $\theta_*(\mu,x)\leq A$ then $\mu$ is $k$-rectifiable with $\mu(B_1)\leq C(n)\Big(\Gamma+A\Big)$.
\end{enumerate}
\end{corollary}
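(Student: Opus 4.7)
The plan is to bootstrap Theorem \ref{t:rect_reif}, which already provides a decomposition $\mu = \mu_k + \mu_+$ with $\mu_+(B_1) \leq C(n)\Gamma$, $\cK := \text{supp}\,\mu_k$ being $k$-rectifiable with $\cH^k(\cK)\leq C(n)$ and packing estimate $\cP^k_r(\cK) \leq C(n)$. For (1), the only new content needed is the bound $\mu_k(B_1) \leq C(n)A$. Fix $\epsilon,\delta>0$; at each $x \in \cK \cap B_1$ the hypothesis $\theta_*(\mu,x) \leq A$ supplies arbitrarily small $r<\delta$ with $\mu(B_{5r}(x)) \leq (A+\epsilon)\omega_k(5r)^k$. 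Applying Vitali (Lemma \ref{l:vitali}) to this fine cover extracts a disjoint subfamily $\{B_{r_i}(x_i)\}$, with $x_i \in \cK \cap B_1$, satisfying $\cK \cap B_1 \subseteq \bigcup B_{5 r_i}(x_i)$. Since the $B_{r_i}(x_i)$ are disjoint and centered in $\cK$, the packing bound gives $\sum \omega_k r_i^k \leq C(n)$, whence
\[
\mu_k(\cK \cap B_1) \;\leq\; \sum_i \mu(B_{5r_i}(x_i)) \;\leq\; (A+\epsilon)\cdot 5^k \sum_i \omega_k r_i^k \;\leq\; C(n) A,
\]
which combined with the $\mu_+$ bound proves (1).

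For (2), decompose $\text{supp}\,\mu \subseteq \cK \cup (\text{supp}\,\mu_+ \setminus \cK)$. Since $\cK$ is closed, any $x$ in the second set has $\mu \equiv \mu_+$ on a small ball around $x$, so the hypothesis reads $\theta^*(\mu_+, x) \geq a$ at such $x$. The classical upper density theorem (Mattila \cite{mattila}, Theorem 6.9) then yields
\[
\cH^k\bigl((\text{supp}\,\mu_+ \setminus \cK) \cap B_1\bigr) \;\leq\; \frac{C(k)}{a}\,\mu_+(B_1) \;\leq\; C(n,a)\,\Gamma,
\]
and combined with $\cH^k(\cK) \leq C(n)$ this produces the desired bound on $\text{supp}\,\mu$. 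Rectifiability of $\text{supp}\,\mu$ follows from the rectifiability of $\cK$ together with iteratively applying Theorem \ref{t:rect_reif} to $\mu_+$, which inherits the $\beta$-integral hypothesis with the same $\Gamma$ (since $\mu_+ \leq \mu$ and $\beta(\cdot;\mu_+) \leq \beta(\cdot;\mu)$), peeling off rectifiable pieces from $\mu_+$ at each step.

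For (3), parts (1) and (2) immediately yield $\mu(B_1) \leq C(n)(A+\Gamma)$ and place $\text{supp}\,\mu$ inside a $k$-rectifiable set $S$ with $\cH^k(S)<\infty$. What remains is absolute continuity $\mu \ll \cH^k \llcorner S$, and I expect this to be the main obstacle. The crucial rigidity fact is that on a $k$-rectifiable set the Hausdorff and packing measures coincide, so any $N \subseteq S$ with $\cH^k(N)=0$ satisfies $\cP^k_r(N) \to 0$ as $r \to 0$. Write the Lebesgue decomposition $\mu = \mu^{ac} + \mu^s$ with respect to $\cH^k \llcorner S$ and let $N$ be a concentration set for $\mu^s$, so $\cH^k(N) = 0$. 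The hypothesis $\theta_*(\mu,x) \leq A$ persists for $\mu^s \leq \mu$ at $\mu^s$-a.e.\ $x \in N$, so the exact Vitali argument used in (1), now applied on $N$ with radii $r_i \leq r$, gives
\[
\mu^s(N) \;\leq\; (A+\epsilon)\cdot 5^k\, \cP^k_r(N) \;\xrightarrow[r \to 0]{}\; 0,
\]
forcing $\mu^s = 0$ and hence $\mu = \mu^{ac}$ is $k$-rectifiable. The packing-equals-Hausdorff principle on rectifiable sets is the essential rigidity input underlying this last step.
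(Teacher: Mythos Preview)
Your arguments for (1) and for the Hausdorff estimate in (2) are correct and essentially match the paper's (you use Vitali where the paper uses Besicovitch, but the packing bound $\cP^k(\cK)\le C(n)$ makes either covering lemma work). There are, however, two genuine gaps.

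\textbf{Rectifiability in (2).} Your ``iterate Theorem~\ref{t:rect_reif} on $\mu_+$ and peel off rectifiable pieces'' does not converge: each application produces a remainder $\mu_+^{(j+1)}$ with $\mu_+^{(j+1)}(B_1)\le C(n)\Gamma$, and nothing forces this to decrease. The residual set $\bigcap_j \text{supp}\,\mu_+^{(j)}$ is left uncontrolled. The paper's argument is different and uses a smallness mechanism you have not invoked: for each $x\in U_\Lambda$ one has $\int_0^{2r}\beta_k(x,s)^2\,ds/s\to 0$ as $r\to 0$, so for any $\eta>0$ one can pass to a set $U_{\eta,r}$ (with $\mu_\Lambda(B_1\setminus U_{\eta,r})<\eta$), cover it by small balls $B_{r_i}(x_i)$ with $\sum r_i^k\le C(n)$, and apply Theorem~\ref{t:rect_reif} on each small ball with $\eta$ in place of $\Gamma$. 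The remainders then sum to $C(n)\eta$, which can be made arbitrarily small. This localization-plus-smallness step is the missing idea.

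\textbf{Absolute continuity in (3).} The claim that $\cP^k_r(N)\to 0$ whenever $N\subseteq S$ is rectifiable with $\cH^k(N)=0$ is false for the packing $r$-content as defined in these notes. Take $S=[0,1]\subset\dR$, $k=1$, and $N=\dQ\cap[0,1]$: then $\cH^1(N)=0$, yet for every $r>0$ one can pack disjoint intervals of radius $\le r$ centered at rationals with total length $\approx 1$, so $\cP^1_r(N)\approx 1$ for all $r$. The ``packing equals Hausdorff on rectifiable sets'' principle you cite holds for the packing \emph{measure} (which involves an additional infimum over countable covers of the premeasure), not for the premeasure $\cP^k_r$ itself, and your Vitali argument only bounds $\mu^s(N)$ by a single packing sum, i.e.\ by $\cP^k_r(N)$. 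So the step $\mu^s(N)\le (A+\epsilon)5^k\cP^k_r(N)\to 0$ fails.
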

\begin{remark}
Note that we are requiring the relatively weak conditions of an upper bound on the lower density in $(1)$, and conversely a lower bound on the upper density in $(2)$.  This is directly due to the packing estimates on $\cK$.  If one only had weaker Hausdorff measure estimates on $\cK$, one would have to make the stronger assumptions of upper bounds on upper density and lower bounds on lower density. 	
\end{remark}
\begin{remark}
Tolsa \cite{tolsa:jones-rect} and 
\cite{azzam-tolsa} have proved the following related result.  If $\mu$ is a measure whose upper and lower densities are bounded almost everywhere, then $\mu$ is $k$-rectifiable iff $\int_0^2 \beta_k(x,s)\frac{ds}{s}<\infty$ for a.e. $x$. 	An effective version of the if direction is given in $(3)$, however \cite{tolsa:jones-rect} also proves the only if direction.
\end{remark}


\newpage

\part*{Lecture 3:  Outline Proof of Rectifiable Reigenberg - Neck Regions and their Structure Theory}

We now want to begin the proof of the Rectifiable Reifenberg Theorem \ref{t:rect_reif}.  This lecture will focus on introducing Neck regions and their associated Structure and Decomposition theorems.  After we discuss these we will use them to prove Theorem \ref{t:rect_reif}.  In subsequent lectures we will prove the Neck Structure and Neck Decompositions themselves.  In the process we will build quite a bit more information than is present in Theorem \ref{t:rect_reif}, which itself is quite useful in applications.\\

\section{Neck Regions and their Structure and Decomposition}\label{s:rect_reif_outline:neck_regions}

This section is dedicated to introducing the reader to the notion of a neck region, and we will be stating the basic structure theory and decomposition theorem associated to Neck Regions.  Neck regions first made their appearance in \cite{JiNa_L2} during the proof of the $n-4$ finiteness conjecture and again in \cite{NaVa_EnId} in the proof of the energy identity conjecture for Yang-Mills.  They are also used in \cite{Na_HarmBook},\cite{ChJiNa} in order to to prove the rectifiability of singular sets of harmonic maps and spaces with lower Ricci curvature, respectively.  The notion of a neck region developed in these notes is very analogous, though in some manners quite a bit easier to work with than in the last references due to the technical conditions involved.\\   

Neck Regions will be regions which we can control in a manner analogous to our control in the classical Reifenberg theorem.  There are many subtle points, including the fact that we cannot get a true Reifenberg condition to hold.  That is, when we restrict ourselves to {\it well behaved} points they may not be dense in some $k$-plane on each scale.  However, we can replace it with a weaker notion by making sure at each scale there are {\it enough} well controlled points to at least weakly span a $k$-dimensional plane.  This will be enough to get the control we desire in the end.  To make this more precise let us introduce the notions of linear independence and noncollapsing.  Recall a set of points $\{x_i\}_0^k$ is called linearly independent if no point lives in the span of any of the others.  More effectively:

\begin{definition}[$\epsilon$-Linear Independence]\label{d:linear_independence}
We call a set $\{x_i\}_0^k\in B_r$ $\epsilon$-linearly independent if $x_{i+1}\not\in B_{\epsilon r}\big(x_0+\text{span}\{x_1-x_0,\ldots,x_i-x_0\}\big)$ for each $i$.  We say a set $S\subseteq B_r$ is a $(k,\epsilon)$-	linearly independent set if $\exists$ a $\epsilon$-linearly independent set of points $\{x_i\}_0^k\in S$.
\end{definition}

The notion of $\epsilon$-independence can be viewed as a very weak version of the Reifenberg condition.  That is, a set of points may not {\it densely} span an affine space, but they at least effectively span such a space.  In practice what we will need to be independent are noncollapsing points, which is to say points with lower mass bounds.   Precisely:

\begin{definition}[Noncollapsing]\label{d:noncollapsing}
	Let $\mu$ be a measure, then we say a ball $B_{r}(x)$ is $(k,\epsilon,\nu)$-noncollapsed if there exists a $2\epsilon$-linearly independent $\{x_i\}_0^k\in B_r(x)$ such that we have the lower mass bounds $\mu(B_{\epsilon r}(x))>\nu (\epsilon r)^k$.
\end{definition}
\begin{remark}
The condition that $B_r(x)$ be $(k,\epsilon,\nu)$-noncollapsed guarantees not only that there are balls with definite mass	 in $B_r(x)$, but that there are $k+1$ such balls which effectively span a $k$-dimensional affine subspace.
\end{remark}
\begin{remark}
The condition implies that if $y_i\in B_{\epsilon r}(x_i)$ then $\{y_i\}_0^k$ are $\epsilon$-linearly independent. 	\\
\end{remark}

Let us now give our formal definition of Neck Regions:\\

\begin{definition}[Neck Regions]\label{d:neck_region}
Let $\mu$ be a measure on $B_r$ with $\cC\subseteq B_r$ a closed subset and $r_x:\cC\to \dR^+$ a radius function such that the closed balls $\{\overline B_{\tau^2 r_x}(x)\}$ are disjoint \footnote{From this point forward we take $\tau=\tau_n \equiv 10^{-10n}$.  The main property necessary for this constant is that on scale $\tau_n$ we need that the $k$-content of an $\ell$-plane is small when $\ell<k$, more precisely we need $\Vol(B_{\tau_n}(L^{k-1})\cap B_2) < 10^{-2}\tau^k_n$.}.  We call $\cN = B_r\setminus \overline B_{r_x}(\cC)$ a $(k,\delta,\epsilon,\nu )$-neck region if \footnote{If $A\subseteq \dR^n$ is a closed subset and $r_A:A\to \dR^+$ is a nonnegative function then we define the closed variable radius tube $\overline B_{r_A}(A)\equiv \bigcup_{a\in A} \overline B_{r_A(a)}(a)$.}
\begin{enumerate}
\item[(n1)] For $x\in \cC$ and $r_x\leq s\leq r$ $\exists$ affine $L^k$ such that $L\cap B_s\subseteq B_{\tau s}(\cC)$ and $\cC\cap B_s\subseteq B_{\delta s}(L)$.
\item[(n2)] For each $x\in\cC$ with $\tau^{-1}r_x\leq r\leq 1$ we have that $B_r(x)$ is $(k,\epsilon,\nu)$-noncollapsed. 
\item[(n3)] $\int_{r_x}^{2r} \beta_k(x,s)^2\frac{ds}{s}<\delta$ for each $x\in \cC$.
\end{enumerate}
\end{definition}
\begin{remark}
We call $\cC_0\equiv \{x\in \cC: r_x=0\}$ and $\cC_+\equiv \{x\in \cC: r_x>0\}$.	
\end{remark}
\begin{remark}
One should imagine $\delta<<\epsilon<<\tau=\tau(n)\equiv 10^{-10n}$.  
\end{remark}

\begin{remark}
Note that condition $(n3)$ implies, along with the scale continuity of Exercise \ref{exer:beta_continuity}, for every $r_x<s<100 r$ that $\beta_k(x,s)<C(n)\delta$.	
\end{remark}

It is probably helpful to begin with a handful of random remarks discussing the conditions of a Neck Region and where they come into play.  Some of these remarks may not be completely sensible until the reader begins the process of going through the details of the constructions, however it seems nonetheless helpful to have these remarks to put everything in the right framework:
\begin{enumerate}
\item[(nr1)] $\tau$ represents the scale parameter.  In the iterative construction of Neck Regions we will drop by a factor of $\tau$ in each stage.  The condition that $\{\overline B_{\tau^2 r_x}(x)\}$ be disjoint guarantees we do not overcover any region by more than a controlled amount.
\item[(nr2)] One imagines $\cC$ as roughly living on the best approximating subspaces at each stage, hence the $(n1)$ condition $\cC\cap B_s\subseteq B_{\delta s}(L)$.  Because $\{\overline B_{\tau^2 r_x}(x)\}$ are disjoint one cannot ask for better than the weak converse $L\cap B_s\subseteq B_{\tau s}(\cC)$.
\item[(nr3)] The purpose of $\epsilon$ is to control the linear independence in the noncollapsing in $(n2)$.  One wants $\epsilon<<\tau$ so that when a ball {\it fails} to be $(k,\epsilon,\nu)$-noncollapsed, then when recovering the number of balls which have large mass is appropriately small.  This will be important in controlling the number balls which are not Neck regions in the Decomposition Theorem.
\item[(nr4)] The $(n3)$ condition $\int_{r_x}^{2r} \beta_k(x,s)^2\frac{ds}{s}<\delta$ tells us that the error from approximating $\mu$ by an affine subspaces is summably small at each point.  As we will see in Section \ref{ss:proof_neck_structure:best_subspaces} using $\beta_k$ to control $\mu$ is only meaningful on noncollapsed balls, otherwise $\beta_k$ may be small for trivial reasons.  We want $\delta<<\epsilon$ so that our ability to approximate $\mu$ by an affine subspace is much better than how spaced out the noncollapsed balls are.
\item[(nr5)] A nice exercise is to see that the disjoint property of $\{\overline B_{\tau^2 r_x}(x)\}$ trivially forces the lipschitz estimate $|\Lip\, r_x|\leq \tau^{-2}$.  It is possible in the construction to force the much nicer condition $|\Lip\, r_x|\leq \delta$, however this requires a nontrivial argument which complicates the proof of the Neck Decomposition Theorem rather significantly, see \cite{JiNa_L2}.  
\item[(nr6)] As in the classical Reifenberg we will prove a form of submanifold approximation theorem for Neck regions.  Under the condition $|\Lip\, r_x|\leq \delta$ a gluing construction for the submanifold approximation theorem is possible, however under the condition $|\Lip\, r_x|\leq \tau^{-2}$ it really is not.  This is our primary reason for introducing the approximate distance function construction in Section \ref{ss:class_reif:dist_approx}, which is a more flexible argument.\\
\end{enumerate}


We will discuss examples in a moment, but let us first introduce some basic notation which will be in force throughout.\\

{\bf Notation:  }  Given a Neck region $\cN = B_1\setminus \overline B_{r_x}(\cC)$ we extend the radius function $r_x:\cC\to \dR$ to $r_x:B_1\to \dR$ by the regularity scale formula:
\begin{align}\label{e:regularity_scale}
r_x \equiv \sup\{0< s<1: r_y\geq \tau^{-2}s \;\forall\; y\in \cC\cap B_s(x)\}\, .
\end{align}

Let us observe that $r_x\geq d(x,\cC)$.  Below are a handful of basic properties about our regularity scale:
\begin{exercise}\label{exer:regularity_scale}
Show the following:
\begin{enumerate}
\item $r_x$ is lipschitz with $|\text{Lip}\, r_x|\leq \tau^{-2}$.  \footnote{Recall $|\text{Lip} f|(x) \equiv \lim\sup_{y\to x} \frac{|f(x)-f(y)|}{|x-y|}$.  The lipschitz constant is controlled by how large of radii are disjoint.}
\item If $x\in \cC$ then $r_x$ agrees with the radius function on $\cC$.
\end{enumerate}
\end{exercise}

The above exercise tells us that $r_x$ is a fair extension of the radius function on $\cC$.  Let us begin with an easy example:

\begin{example}[Trivial Neck Region]\label{ex:trivial_neck}
Let $\mu_k = \nu  \cH^k\cap L\cap B_1$ be a multiple of the Hausdorff meausure on a linear subspace $L^k\subseteq \dR^n$.  Let $r_x:L\cap B_1\to \dR^+$ any nonnegative function with $|\nabla r_x|\leq \tau^{-2}$ and let $\cC\subseteq L^k\cap B_1$ be any closed subset such that $\{B_{\tau^2 r_x}(x)\}_{\cC}$ are disjoint and maximal.  By maximal we mean that if $y\in L\cap B_1$, then $B_{\tau^2 r_y}(y)\cap B_{\tau^2 r_x}(x)\neq \emptyset$ for some $x\in \cC$.  Then observe that $\cN\equiv B_1\setminus \overline B_{r_x}(\cC)$ is a $(k,\delta,\epsilon,\nu )$-neck region for $\mu$. 
\end{example}

We can extend this example slightly:

\begin{example}[Neck Region Example]\label{ex:trivial_neck2}
Let $\mu = \mu_k+\mu_+$, where $\mu_k = \nu  \cH^k\cap L\cap B_1 + \mu^{-}_k$ is a multiple of the Hausdorff meausure on a subspace $L^k\subseteq \dR^n$, plus an arbitrary measure $\mu'_k$ supported on $L^k$, and $\mu_+ = \delta\,\cH^n\cap B_1$ is a small multiple of the standard Hausdorff measure.  Let $r_x:L\cap B_1\to \dR^+$ any nonnegative function with $|\nabla r_x|\leq \tau^{-2}$ and let $\cC\subseteq L^k\cap B_1$ be any closed subset such that $\{B_{\tau^2 r_x}(x)\}_{\cC}$ are disjoint and maximal.  Then observe that $\cN\equiv B_1\setminus \overline B_{r_x}(\cC)$ is a $(k,\delta,\epsilon,\nu )$-neck region for $\mu$. 
\end{example}

Let us make some observations about the above examples:
\begin{itemize}
\item The center points $\cC$ all lie inside a well behaved submanifold, indeed just a linear subspace.
\item  The measure $\mu$ is uniformly bounded, and indeed small, on the neck region.
\end{itemize}

These two properties are not random, and indeed we wish to show they actually always hold on Neck Regions.  The following is our main structure theorem for Neck Regions: 

\begin{theorem}[Neck Structure Theorem]\label{t:neck_structure}
	Let $\mu$ be a measure on $B_1$ with $\cN = B_1\setminus \overline B_{r_x}(\cC)$ a $(k,\delta,\epsilon,\nu )$-neck region.  Then there exists a submanifold $T\subseteq B_1$ such that
\begin{enumerate}
\item $\cC\subseteq T$.
\item $T$ is $1+C(n,\epsilon,\nu )\delta$-bilipschitz to $B_1(0^k)$.
\item $\mu(\cN)\leq \mu(B_{r_x}(T))<C(n,\epsilon)\,\delta$. 
\end{enumerate}
In particular, for $\delta<\delta(n,\epsilon,\nu)$ we have $\cC_0\subseteq T$ is $k$-rectifiable with $\cH^k(\cC_0)+\sum_{\cC_+} r_x^k \leq C(n)$.
\end{theorem}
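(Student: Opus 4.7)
The approach mirrors the proof of the classical Reifenberg Theorem, adapted to handle the three new features of the neck setting: (i) the holes created by excising $\overline B_{r_x}(\cC)$, (ii) integral Dini control on $\beta_k$ in place of pointwise Reifenberg, and (iii) the requirement of bilipschitz rather than biH\"older control. For each dyadic scale $r_i = 2^{-i}$ I would construct an approximating submanifold $T_i$ as the zero set $\Phi_i^{-1}(0)$ of an approximate squared-distance function $\Phi_i : B_1\to \dR_{\geq 0}$, built exactly as in the proof of the Submanifold Approximation Theorem, but where the smoothly-varying affine subspaces $L_y^i$ are obtained by averaging only the $\beta_k$-best-fit planes centered at nearby noncollapsed points $x\in \cC$ at effective scale $\overline r_y^i \equiv \max\{r_i, r_x\}$. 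Once $r_i$ drops below $\tau r_y$ I would freeze the construction locally so that $\Phi_j = \Phi_i$ for all $j\geq i$ on a neighborhood of $y$; thus the sequence stabilizes near $\cC_+$ and converges to a limit submanifold $T$, which by $(n1)$ contains $\cC$.

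The key local ingredient is an analogue of the Subspace Selection Lemma adapted to the neck setting. The noncollapsing condition $(n2)$ produces $k+1$ balls of mass at least $\nu(\epsilon r)^k$ whose centers are $\epsilon$-linearly independent, and condition $(n3)$ forces these centers to lie within $C(n,\epsilon,\nu)\delta r$ of any $\beta_k$-best-fit plane on $B_r(x)$; otherwise the quadratic contribution to $\beta_k(x,r)^2$ from their mass alone would exceed $c(n,\epsilon,\nu)\delta$, contradicting $(n3)$ via Exercise on the continuity of $\beta$-numbers across nested scales. This substitutes for the role of the pointwise Hausdorff-density estimate Exercise \ref{exer:subspace_close1} in the classical case and shows that best-fit planes on overlapping noncollapsed balls agree up to $C(n,\epsilon,\nu)\beta_k$. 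The covering by balls $\{B_{\tilde r_\alpha}(x_\alpha)\}$ with $\tilde r_\alpha \asymp \overline r_{x_\alpha}^i$ is the one produced by Exercise \ref{exer:covering:3}, and the averaging arguments for $L_y^i$, the properties of $\Phi_i$, and the graphical regularity of $T_i$ proceed verbatim to give smooth $T_i$ together with projections $\pi_i : T_{i+1} \to T_i$ satisfying, at noncollapsed $x$ and unit tangent $v$, the crucial squared estimate
\begin{align*}
\big||d\pi_i[v]|-1\big| \leq C(n,\epsilon,\nu)\,\beta_k(x,r_i)^2.
\end{align*}

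The bilipschitz conclusion follows by summing this estimate across scales, which is now legal because the Dini hypothesis $(n3)$ yields summability of $\beta_k^2$, not mere smallness. Setting $\Pi_{i,0} = \pi_0 \circ \cdots \circ \pi_{i-1} : T_i \to T_0$ and running the three-claim argument from the classical proof, with the $O(\delta)$ per-scale error replaced by $O(\beta_k(x_\ell,r_\ell)^2)$ at scale $r_\ell$ for a nearby $x_\ell \in \cC$, one obtains
\begin{align*}
\big||\Pi_{i,0}(x) - \Pi_{i,0}(y)| - |x-y|\big| \leq C(n,\epsilon,\nu)\,|x-y| \sum_{r_\ell \geq |x-y|} \beta_k(x_\ell, r_\ell)^2.
\end{align*}
By Exercise \ref{exer:beta_continuity2} comparing discrete sums with integrals and by $(n3)$, the tail is bounded by $C(n,\epsilon,\nu)\delta$, which passes to the limit as a bilipschitz constant $1 + C(n,\epsilon,\nu)\delta$ for $\Pi : T \to T_0 = B_1(0^k)$. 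For the mass bound, condition $(n2)$ implies that on each noncollapsed ball the mass lying outside a thin tube around the best-fit plane contributes quadratically to $\beta_k^2$; Vitali-covering $B_{r_x}(T)$ by noncollapsed balls and integrating against $d\mu$ then yields $\mu(\cN) \leq \mu(B_{r_x}(T)) \leq C(n,\epsilon)\delta$ directly from $(n3)$.

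The final packing estimate is immediate once $T$ is bilipschitz to $B_1(0^k)$: the disjoint balls $\{B_{\tau^2 r_x}(x)\}_{\cC_+}$ map to near-disjoint balls of comparable radius in $B_2(0^k)$, so Exercise \ref{exer:covering:1}(a) bounds $\sum r_x^k$, while $\cH^k(\cC_0) \leq (1+C\delta)^k \cH^k(\Pi(\cC_0)) \leq C(n)$. The main obstacle throughout is the very first step: showing that noncollapsing together with the Dini bound forces best-fit planes on overlapping scales to stay aligned with squared error, which is the ingredient that replaces the direct Hausdorff-density argument of the classical case and which is what ultimately converts biH\"older into bilipschitz control. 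A secondary technical headache, foreseen in remark $(nr6)$, is that the radius function $r_x$ is only Lipschitz with constant $\tau^{-2}$ rather than $\delta$, so the construction of $T$ \emph{must} go through the approximate distance function $\Phi_i$ of Section \ref{ss:class_reif:dist_approx} rather than through any naive gluing of submanifold charts.
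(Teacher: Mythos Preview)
Your outline matches the paper's proof closely: best-subspace stability from noncollapsing plus $\beta_k$ control, a Subspace Selection Lemma, approximate distance functions $\Phi_i$ with $T_i=\Phi_i^{-1}(0)$, projection maps with the squared error $||d\pi_i[v]|-1|\leq C\beta_k^2$, and the bilipschitz bound by summing the Dini series. The packing and rectifiability conclusions at the end are also handled exactly as the paper does.

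The one step where your sketch is too loose is the mass bound $\mu(\cN)\leq C(n,\epsilon)\delta$. You write ``Vitali-covering $B_{r_x}(T)$ by noncollapsed balls and integrating against $d\mu$,'' but integrating against $\mu$ is circular: $\mu$ has no a priori bound, and $(n3)$ only gives a pointwise Dini bound for $x\in\cC$, not an integrated one over $\mu$. The paper resolves this by first using the bilipschitz conclusion (2) to deduce that $\mu_T\equiv\cH^k\llcorner T$ is Ahlfors $k$-regular with $\mu_T(B_1)\leq C(n)$, and then proving (Claim~4 in the paper) that for $y\in\cN$ with $d=d(y,T)$,
\[
\mu\big(B_{d/10}(y)\big)\leq C(n)\int_{B_{2d}(y)}\beta_k(z,10d)^2\,d\mu_T(z),
\]
via a center-of-mass contradiction: if $\mu(B_{d/10}(y))$ were large, Lemma~\ref{l:COM_control} would force the center of mass $Y$ to be close to the best plane $L$, but $Y$ is near $y$ which is at distance $\approx d$ from $T\approx L$. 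One then Vitali-covers $\cN$ by such balls, sums, and uses the Ahlfors regularity of $\mu_T$ together with $(n3)$ (transferred to points of $T$) to bound the right side by $C(n)\delta\,\mu_T(B_1)\leq C(n)\delta$. The auxiliary measure $\mu_T$ on the submanifold, not $\mu$ itself, is what makes the integration legitimate; make sure this appears explicitly in your write-up.
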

\begin{remark}
Note that if $\mu_\cC = H^k\cap \cC_0+ \omega_k\sum_{\cC_+}r_x^k$ is the packing measure of $\cC$, then condition $(2)$ gives us that for each $x\in \cC$ and $r_x\leq r\leq 2$ the Ahlfors regularity condition $A(n)^{-1}r^k \leq \mu_\cC(B_r(x))\leq A(n) r^k$.	See Exercises \ref{exer:covering:1}.
\end{remark}
\begin{remark}
Note if $B_{s_i}(y_i)$ is any disjoint collection of balls with $y_i\in T$, then $(2)$ gives that $\sum s_i^k \leq C(n)$.  See Exercise \ref{exer:covering:1}.
\end{remark}
\begin{remark}
The Neck Regions and Neck Structure Theorem mimic very closely the constructions and results from singularity analysis of nonlinear equations, see \cite{naber-valtorta:harmonic},\cite{ChJiNa}.  However, in those contexts one replaces the assumed $\beta$-bounds in $(n3)$ with bounds on the appropriate monotone quantity.  This makes the proof of the Neck Structure Theorem much more subtle in those contexts.  One proves so-called $L^2$-subspace approximation theorems in order to (sharply) turn the monotone quantity drop into a $\beta$-number estimate, however to do this one must already know the volume bound on $\mu$.  This leads to a loop where one must prove the volume estimates, rectifiability, and $\beta$-number estimates all simultaneously.  In the context of these notes we get to avoid this subtlety.	
\end{remark}

Now that we have introduced Neck regions and discussed some their structure theory, the next reasonable question is whether or not any exist.  More than that, for this to be worth the time and effort we should somehow hope that enough neck regions exist that we may potentially use them toward our greater goal of proving the Rectifiable Reifenberg Theorem \ref{t:rect_reif}.  The next result tells us how under the correct $\beta$-number bounds we may decompose $\text{supp}\mu\subseteq B_1$ into pieces with special behavior.  The crucial piece of this decomposition will turn out to be neck regions:

\begin{theorem}[Neck Decomposition]\label{t:neck_decomposition}
	Let $\mu$ be a Borel measure on $B_1$ and assume for each $x\in B_1$ that $\int_0^{2} \beta_k(x,s)^2\frac{ds}{s}\leq \Gamma$.   
Then for each $\nu,\epsilon ,\delta>0$ with $\delta<\delta(n,\epsilon,\nu )$ $\exists$ a covering $B_1 \subseteq \cS^{-}\cup \cS^k\cup \cS^{+}\,  \text{ with }$
\begin{align}
&\cS^{+} = \bigcup_a \big(\cN_a\cap B_{r_a}\big) \cup \bigcup_b B_{r_b}(x_b)\, \text{ and }\;\; \cS^k = \bigcup_a \cC_{0,a}\, ,
\end{align}
and such that
\begin{enumerate}
\item 	$\cN_a = B_{2r_a}(x_a)\setminus \overline B_{r_{a,x}}(\cC_a)$ is a $(k,\delta,\epsilon,\nu )$-neck region.  In particular, $\mu(\cN_a)\leq C(n)\delta\, r_a^k$ and $\cC_{0,a}$ is $k$-rectifiable by Theorem \ref{t:neck_structure}.
\item $B_{r_b}(x_b)$ satisfies the measure constraint $\mu(B_{2r_b})<C(n)\nu \, r_b^k$ .
\item We have the content estimates $\sum r_a^k + \sum r_b^k < C(n,\delta,\epsilon,\nu ,\Gamma)$ and packing estimate $\cP^k(\cS^-\cup\cS^k)<C(n,\delta,\epsilon,\nu ,\Gamma)$ \footnote{Due to time constraints we will not carefully prove the packing estimate in these notes and instead focus on the Hausdorff measure estimates, however they follow from the precise arguments of these notes, no new ideas are necessary, just a little messy technical work.},
\item We have the Hausdorff measure estimate $\cH^{k}(\cS^{-})=0$.
\end{enumerate}
\end{theorem}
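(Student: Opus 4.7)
The plan is to build the decomposition by an inductive covering construction that grows maximal neck regions on dyadic scales, recursing into residual balls whenever the neck construction must stop prematurely. The total $\beta$-integral bound $\Gamma$ controls how deep the recursion can go, while the Neck Structure Theorem \ref{t:neck_structure} controls each individual neck region. Throughout I fix parameters $\delta \ll \epsilon \ll \nu^{1/k} \ll \tau = \tau(n)$.

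First I would set up a single-neck subroutine. Given an input ball $B_{2r_a}(x_a)$ which is itself $(k,\epsilon,\nu)$-noncollapsed (if not, it is declared a bad ball immediately and the subroutine returns), I inductively build a center set $\cC_a \subseteq B_{2r_a}(x_a)$ over the discrete scales $\tau^j r_a$, $j=0,1,2,\ldots$. At the $j$th stage, for each surviving active center $x$ I examine $B_{\tau^j r_a}(x)$ and execute exactly one of three actions. Type (A): if $B_{\tau^j r_a}(x)$ fails to be $(k,\epsilon,\nu)$-noncollapsed, declare $r_{a,x}=\tau^j r_a$ and list $B_{r_{a,x}}(x)$ as a bad ball $B_{r_b}(x_b)$ in $\cS^+$, with condition (2) holding automatically from the collapse. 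Type (B): if the accumulated local $\beta$-sum $\int_{\tau^j r_a}^{r_a}\beta_k(x,s)^2\frac{ds}{s}$ has exceeded a small threshold $\eta=\eta(n,\delta)$, declare $r_{a,x}=\tau^j r_a$ and flag this ball for the outer recursion. Type (C): otherwise choose a best-fit $L^k$ on $B_{\tau^j r_a}(x)$ and replace $x$ by a maximal $\tau^2$-disjoint family of points in $L^k\cap B_{\tau^j r_a}(x)\cap B_{\tau^{j+1} r_a}(\mathrm{supp}\,\mu)$; these become the next-stage centers. Points whose trajectory never triggers (A) or (B) contribute to $\cC_{0,a}$. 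An $L^2$ analog of Exercise \ref{exer:subspace_close1}, available because each surviving ball is noncollapsed and has small $\beta$ by the type (C) criterion, ensures that best subspaces vary by at most $C(n)\delta$ across consecutive scales, which maintains conditions $(n1)$--$(n3)$ of Definition \ref{d:neck_region} throughout.

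Second, I iterate: each type (B) ball is fed back into the single-neck subroutine, producing a tree of nested neck regions $\cN_a$. The content estimate in (3) follows from a discrete Fubini accounting against the packing measure supplied by Theorem \ref{t:neck_structure}. Concretely, Theorem \ref{t:neck_structure} equips each $\cC_a$ with an Ahlfors $k$-regular packing measure $\mu_a$ of total mass $\asymp r_a^k$, and each type (B) stopping event at $x$ consumes a $\beta$-integral fragment of size at least $\eta$ charged to a ball of $\mu_a$-mass $\approx (\tau^j r_a)^k$. Summing over the entire recursion tree,
\begin{align*}
\eta \sum_a r_a^k \;\lesssim\; \sum_a \int_{\cC_a}\!\int_0^{r_a}\!\beta_k(y,s)^2\,\frac{ds}{s}\,d\mu_a(y) \;\lesssim\; C(n,\delta,\epsilon,\nu)\,\Gamma,
\end{align*}
the final inequality coming from Ahlfors regularity together with the hypothesis $\int\!\int\beta_k^2\,\frac{ds}{s}\,d\mu \leq \Gamma$. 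The bound $\sum r_b^k \lesssim C\Gamma$ is obtained by the same tree accounting, trading each type (A) collapsed ball against the missing $\nu$-fraction of mass. Finally $\cS^-$ is the set of points that remain undecided at every stage, which at each level is covered by active balls whose total $k$-content tends to zero, yielding $\cH^k(\cS^-)=0$.

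The main obstacle is the charge-assignment step in the summation above: proving that the $\beta$-drop at each type (B) stopping scale can be assigned injectively to a positive-mass region of $\mu_a$ in $B_{\tau^{j-1}r_a}(x)$, in such a way that these charges do not overlap across different neck regions in the recursive tree. This requires using the Ahlfors regularity provided by Theorem \ref{t:neck_structure} to convert pointwise $\beta$-integral control into a Hausdorff-style charge, and then using the built-in disjointness of the $\cN_a$ to sum these charges without double counting. All other steps are routine covering arguments in the style of Section \ref{sss:background:covering}, but this Fubini bookkeeping across a tree of necks is the genuine technical heart of the theorem.
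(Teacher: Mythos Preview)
Your type (B) mechanism and the recursion on distortion drop is essentially the paper's handling of $(s)$-balls (induction on $\Gamma$ in steps of size $\eta$), so that part is sound. But your type (A) step contains a genuine error: failure of $(k,\epsilon,\nu)$-noncollapsing does \emph{not} imply the mass bound $\mu(B_{2r_b})<C(n)\nu r_b^k$. A ball can fail to be noncollapsed while carrying arbitrarily large mass---for instance if all of $\mu$ is concentrated on a $(k-1)$-plane, or even at a single point. So ``condition (2) holding automatically from the collapse'' is false, and your claimed bound $\sum r_b^k\lesssim C\Gamma$ via ``trading against the missing $\nu$-fraction of mass'' has no content.

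The paper fixes exactly this gap by introducing a third recursion mechanism you are missing. When the noncollapsing set $V(x,r)$ fails to be $(k,2\epsilon)$-linearly independent, it lies in $B_{2\epsilon r}(L^{k-1})$ for some $(k-1)$-plane $L$ (Proposition \ref{p:d_ball_covering}). The complement $B_r(x)\setminus V(x,r)$ is covered by genuine small-mass $(b)$-balls (each point there fails the lower mass bound by definition of $V$), while the tube $B_{2\epsilon r}(L^{k-1})$ is covered by $(f)$-balls of total $k$-content $\leq C(n)\epsilon\, r^k$. These $(f)$-balls are balls about which nothing is known, but their small total content means that re-feeding them into the whole construction produces a convergent geometric series (Proposition \ref{p:inductive2}). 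The residual $\cS^-$ is the intersection of the nested $(f)$-covers, which has $\cH^k$-measure zero precisely because $\sum r_f^k\to 0$. Without this $(f)$-ball/geometric-series device your argument cannot close: you have no control on the $k$-content of collapsed balls, and hence no way to prove either $\sum r_b^k<\infty$ or $\cH^k(\cS^-)=0$. (A smaller point: you misquote the hypothesis as an integrated bound $\int\!\int\beta^2\,d\mu<\Gamma$; the theorem assumes the pointwise bound $\int_0^2\beta_k(x,s)^2\frac{ds}{s}\leq\Gamma$ for each $x$, which is why the paper runs a finite-depth induction on $\Gamma$ rather than a Fubini argument.)
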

\begin{remark}\label{r:neck_decomposition:rectifiable}
It follows immediately that $\cS^k$ is $k$-rectifiable.	
\end{remark}
\begin{remark}[Effective Estimates]\label{r:eff_estimates}
It follows from $(1)$, $(2)$ and $(3)$ that $\cH^{k}(\cS^{k})\leq C(n,\epsilon,\Gamma)$ and $\mu(S^{+}) < C(n,\delta,\epsilon,\Gamma)$.  Repeating $(4)$ gives $\cH^{k}(\cS^{-})=0$.
\end{remark}
\begin{remark}
If the measure $\mu$ has a lower density bound then one can weaken the assumption to more of a Carlson estimate: $r^{-k}\int_{B_r(x)}\int_0^{2r} \beta_k(x,s)^2\frac{ds}{s}\,d\mu\leq \Gamma$ for all $B_r(x)\subseteq B_1$.  Indeed, most interesting examples satisfy this but not the pointwise condition.  The lower density bound is important as one can go from the pointwise condition to the carleson condition through an iterative process, but to control estimates one needs to turn small mass into small hausdorff measure at each stage.
\end{remark}

Let us present an example of this: 

\begin{example}

Let $L$ and $L'$ be perpendicular $k$-dimensional subspaces with $\mu = \nu \cH^k\cap L\cap B_1 + \nu \cH^k\cap L'\cap B_1$.  For each $x\in L\cup L'\setminus \{0\}$ consider the radius $d_x\equiv 10^{-2}|x|$.  Note then that we may write the trivial neck region $\cN_x = B_{2 d_x}(x)\setminus L$ with $\cC_{x}=\cC_{0,x}= L\cap B_{2d_x}(x)$.  Now let $\{B_{r_a}(x_a)\}$ with $x_a\in L\cup L' \cap B_1$ and $r_a\equiv 10^{-2}|x_a|$ be a maximal collection of balls such that $\{B_{\tau^2 r_a}(x_a)\}$ are disjoint, so that in particular $(L\cup L'\cap B_1)\setminus\{0\}\subseteq \bigcup_a \cC_{0,a}$.  Similarly let us now choose $\{B_{r_b}(x_b)\}$ to be a maximal collection with $x_b\in B_1\setminus \bigcup_a B_{r_a}(x_a)$, $r_b\equiv 10^{-3}|x_b|$ and such that $\{B_{\tau^2 r_b}(x_b)\}$ are disjoint.  If we then define
\begin{align}
&\cS^+ \equiv \bigcup_a \big(\cN_a\cap B_{r_a}(x_a)\big) \cup \bigcup_b B_{r_b}(x_b)\, ,\notag\\
&\cS^k \equiv \bigcup_a \cC_{0,a}\, ,\notag\\
&\cS^- \equiv \{0\}\, ,
\end{align}
then we see that $B_1\subseteq \cS^-\cup \cS^k\cup \cS^+$ is a Neck Decomposition covering.  In particular one sees from this example that it is fully possible to have a countable number of pieces in the decomposition. $\square$
\end{example}

The rest of this section will be dedicated to seeing that the Neck Decomposition Theorem can be used to conclude the Rectifiable Reifenberg of Theorem \ref{t:rect_reif}. 

\vspace{.3cm}

\section{Proof of the Rectifiable Reifenberg Theorem \ref{t:rect_reif} given the Neck Decomposition Theorem \ref{t:neck_decomposition}}

First note that if we prove the result for $\Gamma=1$, then the general result has been proven by simply rescaling $\mu$.  Thus, we will focus on this case.  Now observe that using \eqref{e:rect_reif:beta_bound} we can define the set
\begin{align}
U_\Lambda \equiv \{x\in B_1: \int_0^2 \beta_k(\mu;x,r)^2\frac{dr}{r} < \Lambda\}\, ,
\end{align}
so that we have
\begin{align}\label{e:mu_Lambda}
\mu(B_1\setminus U_\delta)\leq \Lambda^{-1}\, . 	
\end{align}
Thus if we eventually choose $\Lambda=1$ say, then the above tells us we need only worry about estimating $\mu$ on $U_\Lambda$ \footnote{We consider general $\Lambda$ here as it will be used also in the proof of Corollary \ref{c:rect_reif}.}.  Now let $\mu_\Lambda\equiv \mu\cap U_\Lambda$.  Using that $\mu_\Lambda\leq \mu$, and hence $\beta_k(\mu_\Lambda;x,r)\leq \beta_k(\mu;x,r)$, we then have by the definition of $U_\Lambda$ and Exercise \ref{exer:subspace_close1} that for every $x\in B_1$ 
\begin{align}
\int_0^2 \beta_k(\mu_\Lambda;x,r)^2\frac{dr}{r} < C(n)\Lambda	\, .
\end{align}

Let us now apply the Neck Decomposition Theorem \ref{t:neck_decomposition} to $\mu_\Lambda$ in order to write $B_1 \subseteq \cS^{-}_\Lambda\cup \cS^k_\Lambda\cup \cS^{+}_\Lambda\,  \text{ with }$
\begin{align}
&\cS^{+}_\Lambda = \bigcup_a \big(\cN_a\cap B_{r_a}\big) \cup \bigcup_b B_{r_b}(x_b)\, ,\notag\\ 
&\cS^k_\Lambda = \bigcup_a \cC_{0,a}\, .
\end{align}

We now write
\begin{align}
\mu^k_\Lambda \equiv \mu_\Lambda \cap (\cS^K\cup \cS^-)\, ,\;\;\;\;\; \mu^+_\Lambda = \mu_\Lambda \cap \cS^+\, .
\end{align}

Using the content estimate Theorem \ref{t:neck_decomposition}.4 and the volume condition Theorem \ref{t:neck_decomposition}.2 we have that
\begin{align}
\mu^+_\Lambda(B_1) \leq \sum_a \mu_\Lambda(\cN_a) + \sum_b \mu_\Lambda(B_{r_b}(x_b)) \leq \nu \big(\sum_a r_a^k + \sum_b r_b^k\big)\leq C(n,\delta,\nu ,\Lambda)\, .
\end{align}

On the other hand, using Theorem \ref{t:neck_decomposition}.4, Theorem \ref{t:neck_decomposition}.5, and Remark \ref{r:neck_decomposition:rectifiable} then we see that if $\cK\equiv \text{supp}\mu^k_\Lambda = \cS^k\cup\cS^-$ then $\cK$ is $k$-rectifiable with the packing estimates $\cP^k(\cK)<C(n,\delta,\nu ,\Lambda)$.  In particular, $\cH^k(\cK)<C(n,\delta,\nu ,\Lambda)$.  Now fix $\nu >0$ with $\delta<\delta(n,\nu ,\Lambda)$ with $\Lambda=1$, and define
\begin{align}
	\mu_+ = \mu\cap U_\Lambda + \mu^+_\Lambda\, ,\;\;\;\;\; \mu_k = \mu^k_\Lambda\, ,
\end{align}
then we see we have completed the proof of Theorem \ref{t:rect_reif}.\\

\subsection{Proof of Corollary \ref{c:rect_reif}}

To prove Corollary \ref{c:rect_reif}.1 let us consider the decomposition $\mu = \mu_+ + \mu_k$ from Theorem \ref{t:rect_reif} and let $\cK\equiv \text{supp}\mu_k$.  For each $x\in \cK$ use the density assumption to fix $s_x>0$ such that
\begin{align}
\mu(B_{s_x}(x))\leq 2 A\, \omega_k\, s_x^k	\, .
\end{align}
Note then that $\cK\subseteq \bigcup_{x\in \cK} B_{s_x}(x)$ is a so called Besicovitch covering, and thus by the Besicovitch covering theorem we have
\begin{align}
\cK\subseteq \bigcup_{i=1}^N\bigcup_{a_i} B_{s^i_{a_i}}(x^i_{a_i})\, ,
\end{align}
where $N\leq N(n)$ and each collection $\{B_{s^i_{a_i}}(x^i_{a_i})\}\subseteq \{B_{s_x}(x)\}$ are of disjoint balls.  Note then that the packing estimate $\cP^k(\cK)\leq C(n)$ tells us for each $i$ that $\sum_{a_i}s_{a_i}^k\leq C(n)$, and thus we can estimate
\begin{align}
\mu(\cK) \leq \sum_{i=1}^N \sum_{a_i}\mu(B_{s^i_{a_i}}(x^i_{a_i}))\leq 2A\,\omega_k\, \sum_{i=1}^N \sum_{a_i}	 s_{a_i}^k\leq C(n) A\, ,
\end{align}
as claimed.\\

Let us now prove Corollary \ref{c:rect_reif}.2.  Let us first prove the Hausdorff measure estimate on $\text{supp}\mu$.  For this consider again the decomposition $\mu = \mu_+ + \mu_k$.  The estimates for $\mu_k$ are already sufficient, so we focus on $\mu_+$.  For each $x\in \text{supp}\mu_+\setminus \text{supp}\mu_k$ consider a radius $s_x\leq r\cdot d(x,\text{supp}\mu_k)$ such that\footnote{Note that since $\cP(\text{supp}\mu_k)<C(n)$ we have that $\text{supp}\mu_k$ is compact, thus this is a reasonable constraint on the radius.}
\begin{align}
\mu_+(B_{s_x}(x)) = \mu(B_{s_x}(x))\geq \frac{1}{2} a\, \omega_k\, s_x^k	\, .
\end{align}
As before we then have a Besicovitch covering $\text{supp}\mu_+\subseteq \bigcup_{x\in \text{supp}\mu_+} B_{s_x}(x)$ so that we can find a covering
\begin{align}
\text{supp}\mu_+\setminus \text{supp}\mu_k\subseteq \bigcup_{i=1}^N\bigcup_{a_i} B_{s^i_{a_i}}(x^i_{a_i})\, ,
\end{align}
where $N\leq N(n)$ and each collection $\{B_{s^i_{a_i}}(x^i_{a_i})\}\subseteq \{B_{s_x}(x)\}$ are of disjoint balls.  We can then estimate
\begin{align}
\omega_k\, \sum_{i=1}^N \sum_{a_i}	 s_{a_i}^k \leq 2a^{-1}\sum_{i=1}^N \sum_{a_i}\mu_+(B_{s^i_{a_i}}(x^i_{a_i}))\leq 2a^{-1}N(n)\mu_+(B_1)\leq 2a^{-1}C(n,\Gamma)\, .	
\end{align}
In particular, this gives the content estimate $\cH^k_r(\text{supp}\mu_+\setminus \text{supp}\mu_k)\leq 2a^{-1}C(n,\Gamma)$.  Since $r<1$ was arbitrary this gives the Hausdorff measure estimate, as claimed.  \\

To finish Corollary \ref{c:rect_reif}.2 we need to show that $\text{supp}\mu$ is $k$-rectifiable.  Recall from \eqref{e:mu_Lambda} the definition of $U_\Lambda$ and $\mu_\Lambda$, and as in the proof of Theorem \ref{t:rect_reif} from the last subsection we have
\begin{align}
	&\mu(B_1\setminus U_\Lambda)<\Lambda^{-1}\, ,\notag\\
	&\mu_\Lambda = \mu^+_\Lambda+\mu^k_\Lambda\, .
\end{align}
It is thus enough to show $\mu_\Lambda$ is $k$-rectifiable for each $\Lambda<\infty$.  Now since for each $x\in U_\Lambda$ we have $\int_0^{2r}\beta_k(x,s)^2\frac{ds}{s}<\Lambda$, note then that for each $x\in U_\Lambda$ we must also have
\begin{align}
\lim_{r\to 0} \int_0^{2r}\beta_k(x,s)^2\frac{ds}{s}	= 0\, .
\end{align}
Fix $\eta>0$ very small, then by the above we can find $r>0$ such that if $$U_{\eta,r}\equiv\{x\in U_\Lambda: \int_0^{2r}\beta_k(x,s)^2\frac{ds}{s}\leq \eta\}\, , $$ then 
\begin{align}
	\mu_\Lambda(B_1\setminus U_{\eta,r})<\eta\, .
\end{align}
Let us now use the Hausdorff measure estimate on $U_{\eta,r}\subseteq U_\Lambda\subseteq \text{supp}\mu$ previously proved in order to find a covering
\begin{align}
U_{\eta,r}\subseteq \bigcup_i B_{r_i}(x_i) \text{ with }r_i\leq r \text{ and }\sum r^k_i\leq C(n)\, .	
\end{align}
Let $\mu_{i}\equiv \mu_\Lambda\cap U_{\eta,r}\cap B_{r_i}(x_i)$.  Then we can apply Theorem \ref{t:rect_reif} in order to write $\mu_{i}=\mu^+_{i}+\mu^-_{i}$ such that
\begin{align}
&\mu_i^+(B_{r_i}(x_i))\leq C(n)\eta\,r_i^k\, ,\notag\\
&\mu_i^k \text{ is $k$-rectifiable}\, .
\end{align}
In particular, we have that $\mu_\Lambda$ is $k$-rectifiable away from a set of measure
\begin{align}
\mu_\Lambda(B_1\setminus U_{\eta,r}) + \sum_i \mu^+_i(B_{r_i}(x_i))\leq \eta +C(n)\eta\sum r^k_i \leq C(n)\eta	\, .
\end{align}
As $\eta>0$ was arbitrary, this proves that $\mu_\Lambda$ is $k$-rectifiable.  As $\Lambda$ was arbitrary, this proves that $\mu$ is $k$-rectifiable. $\qed$

\newpage

\part*{Lecture 4:  Proof of Neck Structure Theorem and Neck Decomposition Theorem}

We focus in this lecture on the meat of the argument, and prove the neck structure theorem and the neck decomposition theorem.

\section{Proof of Neck Structure Theorem}\label{s:proof_neck_structure}

Our proof of the classical Reifenberg Theorem \ref{t:classical_reif}, given in Lecture 1, was designed precisely to pass over to the context of the Neck Structure Theorem.  Though the basic outline will remain the same, in the context of Theorem \ref{t:neck_structure} most of the results are a bit more refined and there are a handful of technical challenges and nuances beyond the classical result, which we will describe.  In particular, we begin with a few technical preliminaries in order to deal with this:

\subsection{Best Subspaces on Neck Regions}\label{ss:proof_neck_structure:best_subspaces}

Exercise \ref{exer:subspace_close1} told us in the context of the classical Reifenberg theorem that the best approximating subspaces do not change much from scale to scale.  For a general measure, even with well controlled $\beta$-numbers, this will not need to be the case.  We will study this phenomena some in this section and aim toward proving that at least on Neck regions, one can indeed control the best subspaces in a manner analogous to Exercise \ref{exer:subspace_close1}.  Let us begin with some notation and definitions.  

Recall that for each $x\in B_1$ and $0<r\leq 10$ we have fixed a choice of affine $k$-dimensional subspace $L_{x,r}=L_{x,r}[\mu]$ satisfying
\begin{align}\label{e:outline_neck_structure:min_subspace}
L_{x,r}\in \arg\min_L r^{-2-k}\int_{B_{r}(x)} d^2(y,L)d\mu[y]\, ,
\end{align}
so that 
\begin{align}\label{e:outline_neck_structure:min_subspace_error}
	\beta_k(x,r)^2 = r^{-2-k}\int_{B_{r}(x)} d^2(y,L_{x,r})d\mu[y]\, .
\end{align}
Given an affine subspace $L_{x,r}$ we denote $\pi_{x,r}:\dR^n\to L_{x,r}\subseteq \dR^n$ to be the projection map, $\hat L_{x,r}$ the associated linear subspace, and $\hat\pi_{x,r}:\dR^n\to \hat L_{x,r}\subseteq \dR^n$ the linear projection map.  The first point that is worth making is that unlike Exercise \ref{exer:subspace_close1}, if $\mu$ is arbitrary then there is no reason the subspaces need to be comparable, even if the $\beta$-numbers are small:

\begin{example}\label{ex:subspace_notclose}
Let $p,q\in B_1$ be points with $|p-q|=10^{-1}$ $\mu = \delta_{p}+\delta_q$ be the sum of dirac deltas.  Note that for every $x\in B_1$ and $r>0$ we have that $\beta_1(x,r)=0$.  If $p,q\in B_{r}(x)$ are both points in the ball then the best subspace $L_{x,r}$ is the line connecting $p$ and $q$.  If $B_{s}(y)$ contains only one of the points, say $p$, then $L_{y,s}$ can be {\it any} line which contains $p$.  In particular, $L_{x,r}$ and $L_{y,s}$ need not be at all comparable, even if $B_r(x)$ and $B_s(y)$ are comparable.  $\square$
\end{example}

Our main goal in this section is to see that in a Neck region the above does not happen, namely the subspaces $L_{x,r}$ are indeed comparable, in the spirit of Exercise \ref{exer:subspace_close1}.  The following is the main result of this section, which we will prove by its completion:

\begin{proposition}[Best Subspace Behavior on Neck Regions]\label{p:neck_best_subspaces}
Let $\cN = B_1\setminus \overline B_{r_x}(\cC)$ be a $(k,\delta,\epsilon,\nu )$-neck region with $\delta<\delta(n,\epsilon,\nu)$.  Consider subspaces $\{L_{x,r}\}$ defined as in \eqref{e:outline_neck_structure:min_subspace} and let $r\geq 10^2 r_x$.  Then for $x,y\in \cC$ the following hold:
\begin{enumerate}
	\item $d_H(L_{x,r}\cap B_{r}(x), L_{y,s}\cap B_{r}(x)) < C(n,\epsilon,\nu,a)\beta_k(x,10a\,r) \, r$ if $|x-y|\leq 10 r \text{ and } a^{-1} \leq \frac{r}{s} \leq a$.
	\item If $B_r(x)$ and $B_s(y)$ are as above then $\big||\hat\pi_{x,r}[v]|-1\big|\leq C(n,\epsilon,\nu,a)\beta_k(x,10a\, r)^2$ for each $v\in \hat L_{y,s}$ with $|v|=1$,.
\end{enumerate}
\end{proposition}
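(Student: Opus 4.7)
The approach mirrors Exercise \ref{exer:subspace_close1} of the classical setting, with the role of ``$\text{supp}\,\mu$ being dense in $L$'' played by the weaker noncollapsing property (n2). Set $\beta := \beta_k(x,10ar)$. The plan is to show that both $L_{x,r}$ and $L_{y,s}$ are $C(n,\epsilon,\nu,a)\beta\,r$-close in $d_H$ to a common reference subspace $L_{x,20ar}$, and then triangulate. The engine is: (n2) furnishes $k+1$ test balls carrying definite $\mu$-mass and $2\epsilon$-linearly independent centers; the $L^2$ $\beta$-bound together with Chebyshev extracts an actual point of $\text{supp}\,\mu$ in each test ball lying simultaneously close to the two subspaces being compared; an elementary linear-algebra fact then propagates closeness-at-an-$\epsilon$-independent-configuration to Hausdorff closeness of the $k$-planes themselves.

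\textbf{Setup and extraction step.} Since $|x-y|\leq 10r$ and $a^{-1}\leq r/s\leq a$, both $B_r(x)$ and $B_s(y)$ lie inside $\widetilde B := B_{20ar}(x)$, and Exercise \ref{exer:beta_continuity} gives $\beta_k(x,r),\,\beta_k(y,s),\,\beta_k(x,20ar)\leq C(n,a)\beta$. After enlarging $r$ by a dimensional factor, if necessary, so that $r\geq \tau^{-1}r_x$, condition (n2) produces $\{x_i\}_0^k\subseteq B_r(x)\cap\cC$ that is $2\epsilon$-linearly independent and satisfies $\mu(B_{\epsilon r}(x_i))\geq \nu(\epsilon r)^k$. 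Because each $B_{\epsilon r}(x_i)\subseteq B_r(x)\subseteq\widetilde B$, the minimizing identity \eqref{e:outline_neck_structure:min_subspace_error} yields
\[
\int_{B_{\epsilon r}(x_i)}\!\!\big[\,d(z,L_{x,r})^2+d(z,L_{x,20ar})^2\,\big]\,d\mu(z) \;\leq\; C(n,a)\,\beta^2\, r^{k+2}.
\]
Chebyshev combined with the mass lower bound extracts $y_i\in B_{\epsilon r}(x_i)\cap\text{supp}\,\mu$ with $d(y_i,L_{x,r})+d(y_i,L_{x,20ar})\leq C(n,\epsilon,\nu,a)\,\beta\, r$.

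\textbf{Rigidification and conclusion.} Since $y_i\in B_{\epsilon r}(x_i)$, the $\{y_i\}_0^k$ remain $\epsilon$-linearly independent, and an elementary lemma --- two affine $k$-planes passing within $\eta\,r$ of $k+1$ $\epsilon$-linearly independent points in $B_r$ must be $C(n,\epsilon)\,\eta\, r$-close in $d_H$ on $B_r$ --- forces $d_H(L_{x,r}\cap B_r(x),\,L_{x,20ar}\cap B_r(x))\leq C(n,\epsilon,\nu,a)\,\beta\, r$. The identical argument with $(y,s)$ in place of $(x,r)$, invoking (n2) at $y$ and the $L^2$ bound on $B_s(y)$, yields the analogous bound between $L_{y,s}$ and $L_{x,20ar}$, and the triangle inequality closes part (1). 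For part (2), pick a unit $v\in\hat L_{y,s}$ and use the orthogonal decomposition $1=|\hat\pi_{x,r}[v]|^2+|\hat\pi_{x,r}^\perp[v]|^2$. Part (1) (applied to tangent directions) gives $|\hat\pi_{x,r}^\perp[v]|\leq C(n,\epsilon,\nu,a)\,\beta$, and the Taylor inequality $\sqrt{1-t^2}\geq 1-t^2$ delivers the square gain $\big||\hat\pi_{x,r}[v]|-1\big|\leq C(n,\epsilon,\nu,a)\,\beta^2$.

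\textbf{Main obstacle.} The crucial technical point is the linear-algebra lemma in the rigidification step, converting pointwise closeness at a $2\epsilon$-independent configuration into a Hausdorff bound on the $k$-planes; this is where $\epsilon$ enters all the constants, through the conditioning of the affine basis associated with $\{y_i\}_0^k$. A secondary book-keeping issue is the mismatch between the hypothesis $r\geq 10^2 r_x$ and (n2)'s natural range $r\geq\tau^{-1}r_x$, which is handled by passing to a slightly enlarged concentric ball at a cost of constants only.
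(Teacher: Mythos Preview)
Your proposal is correct and follows essentially the same architecture as the paper's proof: use (n2) to find $k+1$ noncollapsed test balls, extract from each a point close to both $L_{x,r}$ and the reference plane $L_{x,10ar}$ (respectively $L_{x,20ar}$), invoke the linear-algebra lemma (which is exactly Lemma~\ref{l:subspace_dist_independent_points}), and finish part (2) via the Pythagorean/Taylor trick (Lemma~\ref{l:linear_projection_squaregain}).

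The one substantive difference is in the extraction step. The paper uses the \emph{generalized center of mass} $Z_i$ of each test ball $B_{\epsilon r}(x_i)$ together with Jensen's inequality (Lemma~\ref{l:COM_control}), which yields $d(Z_i,L)^2\leq r^{k+2}\beta^2/\mu(B_{\epsilon r}(x_i))$ directly and handles the case $\mu(B_{\epsilon r}(x_i))=\infty$ cleanly via Definition~\ref{d:generalized_COM}. You instead apply Chebyshev to the sum $d(z,L_{x,r})^2+d(z,L_{x,20ar})^2$ to find a single point $y_i$ in the support close to both planes simultaneously. Both routes give the same bound with the same dependence on $\epsilon,\nu$; yours is marginally more elementary but requires a word about the infinite-mass case (if $\mu(B_{\epsilon r}(x_i))=\infty$ while the $\beta$-integral is finite, the integrand must vanish on a set of infinite measure, so such a $y_i$ still exists). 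You also correctly flag the bookkeeping mismatch between $r\geq 10^2 r_x$ and the threshold $\tau^{-1}r_x$ in (n2), which the paper glosses over.
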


Estimates $(1)$ and $(2)$ above tell us that if $B_r(x)$ and $B_s(y)$ are comparable balls, then $L_{x,r}$ and $L_{y,s}$ are comparable affine subspaces.  As in Example \ref{ex:subspace_notclose} let us note that we need to be in a neck region for this, otherwise such results are false.  Let us also emphasize the square gain in $(2)$, as it is key to the volume and rectifiability estimates on $\mu$ later.\\

To prove Proposition \ref{p:neck_best_subspaces} we need to learn how to compare the best approximating subspaces of a measure in an accurate manner.  The following definition of a center of mass, which originates in \cite{ENV}, will be used to tell us how to control at least one point in a given ball:

\begin{definition}\label{d:generalized_COM}[Center of Mass]
We define the \emph{generalized $\mu$-center of mass} $X$ of a ball $B_r(x)$ as follows.  If $\mu(B_r(x)) < \infty$, let $X = \frac{1}{\mu(B_r(x))} \int_{B_r(x)} z d\mu(z)$ be the usual center of mass.  If $\mu(B_r(x)) = \infty$, we let $X$ be any point in the intersection
\begin{align}
X\in \overline{B_r(x)} \cap \bigcap \left\{\text{affine $V^k$} : \int_{B_r(x)} d(z, V)^2 d\mu(z) < \infty \right\}.
\end{align}
\end{definition} 

\begin{exercise}\label{exer:COM}
Show if $\mu(B_r(x))=\infty$ but $\beta_k(x,r)<\infty$ then such a point $X$ exists.	
\end{exercise}
\begin{exercise}\label{exer:COM2}
More generally, let $B_s(y)\subseteq B_r(x)$ with $\mu(B_s(y))=\infty$ but $\beta_k(x,r)<\infty$, then show the center of mass $Y$ of $B_s(y)$ lives in $L_{x,r}$.	
\end{exercise}

Let us now see how on a given ball we can at least control how far away the center of mass is from a best subspace.  The main result is the following:

\begin{lemma}[Center of Mass and Best Subspaces]\label{l:COM_control}
Suppose $B_s(y) \subset B_r(x)$ and let $L_{x,r}$ be a best subspace as in \eqref{e:outline_neck_structure:min_subspace}.  Let $Y$ be the generalized center of mass for $B_s(y)$.  Then
\begin{gather}
d(Y, L_{x,r})^2 \leq \frac{r^{k}}{\mu(B_s(y))} \beta_k(x, r)^2\,r^2\, .
\end{gather}
\end{lemma}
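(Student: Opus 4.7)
The plan is to separate the finite and infinite measure cases, with the heart of the argument in the finite case being little more than Cauchy--Schwarz applied to the projection onto $L_{x,r}$.

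First I would dispose of the trivial case $\beta_k(x,r)=\infty$, where the stated inequality is vacuous, and so assume $\beta_k(x,r)<\infty$, i.e.\ $\int_{B_r(x)} d(z,L_{x,r})^2\,d\mu(z) = \beta_k(x,r)^2\, r^{k+2} < \infty$.

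\textbf{Finite case: } Suppose $0<\mu(B_s(y))<\infty$, so $Y$ is the honest center of mass. Let $\pi = \pi_{x,r}$ be the orthogonal projection onto $L_{x,r}$. Since $\pi$ is affine, it commutes with averages, so
\begin{align}
\pi(Y) = \frac{1}{\mu(B_s(y))}\int_{B_s(y)} \pi(z)\,d\mu(z),
\end{align}
and therefore
\begin{align}
Y - \pi(Y) = \frac{1}{\mu(B_s(y))}\int_{B_s(y)} (z-\pi(z))\,d\mu(z).
\end{align}
Since $|z-\pi(z)| = d(z,L_{x,r})$, applying Cauchy--Schwarz (or equivalently Jensen's inequality to $t\mapsto t^2$) yields
\begin{align}
d(Y,L_{x,r})^2 = |Y-\pi(Y)|^2 \leq \frac{1}{\mu(B_s(y))}\int_{B_s(y)} d(z,L_{x,r})^2\,d\mu(z).
\end{align}
Monotonicity and the defining property of $L_{x,r}$ (see \eqref{e:outline_neck_structure:min_subspace_error}) then give
\begin{align}
d(Y,L_{x,r})^2 \leq \frac{1}{\mu(B_s(y))}\int_{B_r(x)} d(z,L_{x,r})^2\,d\mu(z) = \frac{r^{k+2}}{\mu(B_s(y))}\,\beta_k(x,r)^2,
\end{align}
which is exactly the claim.

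\textbf{Infinite case: } Suppose $\mu(B_s(y))=\infty$, so $Y$ is the generalized center of mass from Definition \ref{d:generalized_COM}. Since $B_s(y)\subseteq B_r(x)$, the assumption $\beta_k(x,r)<\infty$ gives
\begin{align}
\int_{B_s(y)} d(z,L_{x,r})^2\,d\mu(z) \leq \int_{B_r(x)} d(z,L_{x,r})^2\,d\mu(z) < \infty,
\end{align}
so $L_{x,r}$ itself is one of the affine subspaces participating in the intersection defining $Y$. Hence $Y\in L_{x,r}$ and $d(Y,L_{x,r})=0$, and the bound holds trivially (using the convention $\infty^{-1}=0$ on the right-hand side, which matches the definition since the RHS is interpreted as $0$ when $\mu(B_s(y))=\infty$).

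There is no real obstacle here: the only subtle point is keeping the two definitions of $Y$ straight, and that is arranged so that the infinite case collapses automatically. The ingredient worth emphasizing is that the affinity of $\pi_{x,r}$ is what allows the center of mass to be transported cleanly, so that the entire estimate reduces to bounding $\int d(z,L_{x,r})^2 d\mu$, which is precisely $\beta_k(x,r)^2 r^{k+2}$ by definition.
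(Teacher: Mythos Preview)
Your proof is correct and follows essentially the same approach as the paper. The paper invokes Jensen's inequality directly on the convex function $z\mapsto d(z,L_{x,r})^2$ in the finite case and cites Exercise~\ref{exer:COM2} for the infinite case; you unpack both steps explicitly via the affine projection $\pi_{x,r}$ and the definition of the generalized center of mass, but the content is identical.
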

\begin{remark}
In this business it is key how various quantities depend on one another.  Let us emphasize that the above formula depends inverse linearly on $\mu(B_s(y))$ and quadratically on $\beta_k(x,r)^2$.  This relationship will be crucial to the mass bounds on Neck Regions later. 
\end{remark}

\begin{proof}
We can assume $\beta_k(x, r) < \infty$, otherwise there is nothing to show.  From Exercise \ref{exer:COM2} we have if $\mu(B_s(y)) =\infty$ then $Y\in L_{x,r}$. Otherwise we can calculate by Jensen's inequality
\begin{align}
d(Y, L_{x,r})^2
&\leq \frac{1}{\mu(B_s(y)) } \int_{B_s(y) } d(z, L_{x,r})^2 d\mu(z) \\
&\leq \frac{r^{k+2}}{\mu(B_s(y)) } r^{-k-2} \int_{B_r(x)} d(z, L_{x,r})^2 d\mu(z) =  \frac{r^{k+2}}{\mu(B_s(y)) } \beta_k(x,r)^2\, ,
\end{align}
as claimed.
\end{proof}

Now we need to move from our ability to control best subspaces at a single point to being able to control the whole best subspace.  The first step in this direction is to see that two $k$-dimensional affine subspaces are close iff they are close at $k+1$ independent points.  This is quite intuitive as an affine subspace is well defined by such a collection.  Before reading the next lemma recall from Definition \ref{d:linear_independence} the notion of $(k,\epsilon)$-linearly independence:

\begin{lemma}[Subspace Distance Estimates]\label{l:subspace_dist_independent_points}
Let $L_1,L_2$ be two $k$-dimensional affine subspaces and let $\{x_i\}_0^k\subseteq B_1$ be a $(k,\epsilon)$-linearly independent set.  Then we have the estimate
\begin{align}
d_H\big(L_1\cap B_1, L_2\cap B_1\big)\leq C(n,\epsilon) \sum_i\Big(d(x_i,L_1)+d(x_i,L_2)\Big)\, .	
\end{align}

\end{lemma}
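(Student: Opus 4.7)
The plan is to reduce the Hausdorff control on $L_1\cap B_1$ versus $L_2\cap B_1$ to the (non-Hausdorff) distance control at the $k+1$ independent points, by exploiting the fact that $k+1$ affinely independent points determine a $k$-plane in a stable way.

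First set $\eta\equiv \sum_i\big(d(x_i,L_1)+d(x_i,L_2)\big)$. If $\eta\geq \epsilon/100$ then the conclusion is trivial, since $d_H(L_1\cap B_1,L_2\cap B_1)\leq 2\leq C(n,\epsilon)\eta$; so assume $\eta$ is much smaller than $\epsilon$. Let $\tilde x_i\equiv \pi_{L_1}(x_i)\in L_1$ and $\hat x_i\equiv \pi_{L_2}(x_i)\in L_2$; then $|x_i-\tilde x_i|, |x_i-\hat x_i|\leq \eta$. A short check shows the projected families $\{\tilde x_i\}_0^k$ and $\{\hat x_i\}_0^k$ remain $(k,\epsilon/2)$-linearly independent in the ambient space, so each forms an affine basis of the $k$-plane $L_1$, resp. $L_2$.

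Now, for any $y\in L_1\cap B_1$, write $y=\sum_{i=0}^k \alpha_i\tilde x_i$ in the affine (barycentric) coordinates, $\sum \alpha_i=1$. The key step is the coefficient bound $|\alpha_i|\leq C(n,\epsilon)$, which follows from the $(k,\epsilon/2)$-independence together with $y\in B_1$: the Gram matrix of the vectors $\tilde x_i-\tilde x_0$ has determinant bounded below by a power of $\epsilon$, and inverting this matrix gives the coefficient bound. Define $y'\equiv \sum_{i=0}^k \alpha_i \hat x_i\in L_2$; then
\begin{align}
|y-y'|\leq \sum_{i=0}^k |\alpha_i|\,|\tilde x_i-\hat x_i|\leq C(n,\epsilon)\sum_i\big(d(x_i,L_1)+d(x_i,L_2)\big)=C(n,\epsilon)\eta.
\end{align}
Finally, $y'\in L_2$ may be slightly outside $B_1$, but $|y'|\leq 1+C(n,\epsilon)\eta$; sliding along $L_2$ toward a fixed point of $L_2\cap B_1$ (for instance the affine center of mass of the $\hat x_i$'s, which lies in $L_2\cap B_1$ since they do) produces $y''\in L_2\cap B_1$ with $|y-y''|\leq C'(n,\epsilon)\eta$. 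Swapping the roles of $L_1$ and $L_2$ gives the symmetric estimate, and the Hausdorff bound follows.

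The main obstacle is the coefficient bound $|\alpha_i|\leq C(n,\epsilon)$; this is a purely linear-algebraic fact whose proof amounts to a quantitative lower bound on the determinant of the matrix $\big(\langle \tilde x_i-\tilde x_0,\tilde x_j-\tilde x_0\rangle\big)_{i,j}$ in terms of $\epsilon$, obtained by iterating the $\epsilon$-independence condition one vector at a time. Once this is in hand, the rest of the argument is routine, with the boundary issue (that $y'$ might slip outside $B_1$) dispatched by the convexity of $L_j\cap B_1$ and a standard sliding argument.
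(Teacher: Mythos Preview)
Your argument is correct and follows essentially the same approach as the paper: reduce to the case where the error is much smaller than $\epsilon$, project the $x_i$ onto the planes to get an affine basis, expand an arbitrary point in that basis, invoke the $\epsilon$-independence to bound the coefficients by $C(n,\epsilon)$, and transfer the point to the other plane using the same coefficients. The paper differs only cosmetically: it passes through the intermediate plane $L=x_0+\mathrm{span}\{x_i-x_0\}$ and compares each $L_j$ to $L$ separately (then uses the triangle inequality), whereas you compare $L_1$ and $L_2$ directly via the two projections $\tilde x_i,\hat x_i$. You are also slightly more careful about the boundary issue of $y'$ possibly landing just outside $B_1$, which the paper glosses over.
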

\begin{proof}
Observe that we may assume $\sum_i\Big(d(x_i,L_1)+d(x_i,L_2)\Big)<10^{-3}\epsilon$, as otherwise by choosing $C(n,\epsilon)$ large the estimates trivially hold.  Let $L = x_0+\text{span}\{x_i-x_0\}$, and we will prove the result
\begin{align}
d_H\big(L_1\cap B_1, L\cap B_1\big)\leq C(n,\epsilon) \sum_id(x_i,L_1)\, ,	
\end{align}
the general case then follows from a triangle inequality.

Now with all of this given we define $\{x^1_i\}_0^k = \{\pi_{L_1}(x_i)\}$ and see that this set is $(k,10^{-1}\epsilon)$-linearly independent with $d(x^1_i,x_i)<10^{-2}\epsilon$.  Now let $\ell^1$ be any point in $L_1\cap B_1$.  Then we can uniquely write
\begin{align}
\ell_1-x^1_0 = \sum_1^k \ell^i_1(x^1_i-x^1_0)\, .	
\end{align}
An instructive exercise, which depends strongly on the $\epsilon$-linear independence of the set $\{x^1_i\}$, is to show the following:
\begin{exercise}
Show $|\ell^i_1|<C(n,\epsilon)|\ell_1-x^1_0|<C(n,\epsilon)$.	
\end{exercise}
Now if we define $\ell \equiv x_0 + \sum_1^k \ell^i_1(x_i-x_0)\in L$ then we have the estimate
\begin{align}
	d(\ell_1,\ell)< C(n,\epsilon)\sum_i \Big |x^1_i-x_i|= C(n,\epsilon) \sum_i d(x_i,L_1)\, .
\end{align}
Since $\ell_1$ was arbitrary this proves every point of $L_1\cap B_1$ lives within $C(n,\epsilon) \sum_i d(x_i,L_1)$ of a point in $L$.  The verbatim argument works with $L_1$ and $L$ switched, and thus we have proven the Hausdorff estimate and completed the Lemma.
\end{proof}

The above Lemma's will be the key point in proving Proposition \ref{p:neck_best_subspaces}.1.  In order to prove Proposition \ref{p:neck_best_subspaces}.2 we provide one more general lemma:

\begin{lemma}\label{l:linear_projection_squaregain}
	Let $L_1,L_2$ be two linear subspaces with $d\equiv d_H\big(L_1\cap B_1, L_2\cap B_1\big)$.  Then for each $v\in L_2$ with $|v|=1$ we have that
\begin{align}
	\big||\pi_1[v]|-1\big|<d^2\, .
\end{align}
\end{lemma}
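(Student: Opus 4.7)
The plan is to exploit the orthogonality of $\pi_1$ together with a one-sided use of the Hausdorff distance, so that the resulting error enters only quadratically. Because $L_1$ is a \emph{linear} subspace and $\pi_1$ is the orthogonal projection onto it, we have the Pythagorean identity
\begin{align}
1 = |v|^2 = |\pi_1[v]|^2 + |v - \pi_1[v]|^2\, .
\end{align}
Thus the whole task reduces to bounding the orthogonal error $|v - \pi_1[v]|$ linearly by $d$; the square gain in the statement will then materialize automatically when we solve for $|\pi_1[v]|$.

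To bound $|v - \pi_1[v]|$, I would use only one direction of the Hausdorff condition. Since $v \in L_2 \cap B_1$, the hypothesis gives a point $w \in L_1 \cap B_1$ with $|v - w| \leq d$. As $\pi_1[v]$ is by definition the closest point in $L_1$ to $v$ (and $w \in L_1$), we conclude
\begin{align}
|v - \pi_1[v]| \leq |v - w| \leq d\, .
\end{align}
Substituting back into the Pythagorean identity yields $|\pi_1[v]|^2 \geq 1 - d^2$.

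Finally, I would apply the elementary inequality $\sqrt{1 - x} \geq 1 - x$, valid for $0 \leq x \leq 1$ (which one checks by squaring, since $(1-x)^2 = 1 - 2x + x^2 \leq 1 - x$ iff $x \leq 1$). With $x = d^2$ this gives $|\pi_1[v]| \geq 1 - d^2$, while the reverse bound $|\pi_1[v]| \leq |v| = 1$ is automatic from $\pi_1$ being an orthogonal projection. Together these produce $\big||\pi_1[v]| - 1\big| \leq d^2$, as claimed. There is no real obstacle: the essential content is that Pythagoras converts a linear bound on the transverse defect $|v - \pi_1[v]|$ into a quadratic bound on the longitudinal defect $1 - |\pi_1[v]|$, and this is precisely the square gain used in Proposition \ref{p:neck_best_subspaces}.2 and, further down the line, in the volume and rectifiability estimates on $\mu$.
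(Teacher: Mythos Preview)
Your proof is correct and follows essentially the same route as the paper's own argument: the Pythagorean decomposition $1=|\pi_1[v]|^2+|v-\pi_1[v]|^2$, the bound $|v-\pi_1[v]|\le d$ coming from the Hausdorff condition, and the elementary inequality $\sqrt{1-x}\ge 1-x$ (equivalently $1-\sqrt{1-x}\le x$) to extract the square gain. Your write-up is in fact slightly more explicit than the paper's in justifying the bound on the transverse component and in verifying the final scalar inequality.
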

\begin{proof}
	The proof is a simple application of the pythagorean theorem, however we emphasize here the important square gain on $d$ here, as this is crucial for future applications.
	
	To prove the result let $v' = \pi_1[v]$ with $w\equiv v'-v$.  Note the two estimates:
\begin{align}
	&|w| = d(v,L_1) \leq d\, ,\notag\\
	&|v|^2 = |v'|^2 + |w|^2\, .
\end{align}
With these in hand we obtain
\begin{align}
\big||v'|-1\big| \leq \big|1-\sqrt{1-|w|^2}\big|\leq |w|^2 \leq d^2\, ,  	
\end{align}
as claimed.
\end{proof}

Finally we are now in a position to prove Proposition \ref{p:neck_best_subspaces}:

\begin{proof}[Proof of Proposition \ref{p:neck_best_subspaces}]

We first prove Proposition \ref{p:neck_best_subspaces}.1.  Let us fix $B_r(x)$ with $r\geq 10^2 r_x$, and by using $(n2)$ let $\{z_i\}_0^k\in B_{r/4}(x)$ be a $(k,2\epsilon)$-linearly independent set with $r_i=\epsilon r/4$ such that $\mu(B_{r_i}(z_i))>\nu r_i^k$.  Now let $Z_i\in B_{r_i}(z_i)$ be the generalized $\mu$-center of mass as in \eqref{d:generalized_COM}.  Note then that the $\{Z_i\}$ are $(k,\epsilon)$-linearly independent.  Thus by using Lemma \ref{l:COM_control} and that $\mu(B_{r_i}(z_i))>\nu r_i^k$ by $(n2)$ we get that
\begin{align}
&d(Z_i, L_{x,r}) < C(n,\nu,\epsilon) \beta_k(x,r) r\, ,\notag\\
&d(Z_i, L_{x,10 a r}) < C(n,\epsilon,a,\nu) \beta_k(x,10a r)\, r\, ,
\end{align}
Therefore by Lemma \ref{l:subspace_dist_independent_points} we have that $d_H(L_{x,r}\cap B_{10ar}(x), L_{x,10ar}\cap B_{10ar}(x)) < C(n,\epsilon,\nu,a)\beta_k(x,10a r) \, r$.  Note that $B_s(x)\subseteq B_{10ar}(x)$, so that the same argument gives $d_H(L_{y,s}\cap B_{10a r}(x), L_{x,10ar}\cap B_{10a r}(x)) < C(n,\epsilon)\beta_k(x,10a r) \, r$.  The triangle inequality then proves Proposition \ref{p:neck_best_subspaces}.1.\\

Now to prove Proposition \ref{p:neck_best_subspaces}.2 we simply use Lemma \ref{l:linear_projection_squaregain} together with Proposition \ref{p:neck_best_subspaces}.1.\\

\end{proof}

\vspace{.3cm}


\subsection{Submanifold Approximation Theorem}

Recall that a primary goal in the proof of the Neck Structure Theorem \ref{t:neck_structure} is to build a submanifold $T$ which contains $\cC$ and is bilipschitz to the ball $B_1(0^k)$.  In the spirit of the classical Reifenberg, we will build a family $T_{r}$ of smooth submanifolds which live near $\cC$ and are scale invariantly smooth.  To state the Approximating Submanifold Theorem more precisely, let us begin by introducing some notation.  Recall from \eqref{e:regularity_scale} our extension of the radius function $r_x$ to all of $B_1$ by a regularity scale procedure.  In the construction of the submanifolds $T_{r}$ we will not want to look below scale $r$, and as such we also consider the following:  

\begin{align}\label{e:regularity_scale_r}
\overline r_x \equiv r_x\vee r \equiv \max\{r_y,r\}\, .
\end{align}
Note that $\overline r_y\geq d(y,\cC)\vee r$.

We can now state the Approximating Submanifold Theorem, which is one of the central technical results needed in the proof of Theorem \ref{t:neck_structure}:

\begin{theorem}[Approximating Submanifold Theorem]\label{t:approx_submanifold}
Let $\cN = B_1\setminus \overline B_{r_x}(\cC)$ be a $(k,\delta,\epsilon,\nu )$-neck region with  with subspaces $\{L_{x,r}\}$ defined as in \eqref{e:outline_neck_structure:min_subspace}.  For each $r>0$ there exists a smooth submanifold $T_{r}\subseteq B_2$ with $T_{r}=T_1=L_{0,2}$ on $A_{3/2,2}(0)$ and which satisfy
\begin{enumerate}
\item $\cC\subseteq B_{C(n)\delta\overline r_y}(T_r)$,  $T_r\subseteq B_{C(n)\tau\overline r_y}(\cC)$.
\item $d_H(T_r\cap B_s(x),L_{x,s}\cap B_s(x))<C(n)\Big(\int_{\bar r_y}^{\tau^{-1}s}\beta_k(x,t)\frac{dt}{r}\Big) s\leq C(n)\delta s$ for $s\geq \overline r_y$.
\item $T_r$ is a $(C(n)\beta_k(x,\tau^{-1}r_x), r_x)$-graphical submanifold, see Definition \ref{d:graphical_regularity}.
\item $\exists$ smooth $\pi_r: B_{\overline r_y}(T_r)\to T_r\subseteq \dR^n$ with $\pi_r\cap T_r = Id$ and $|\nabla^2 \pi_r|(y)<C(n)\beta(y,\tau^{-1}\overline r_y)$. 
\item $d_H(T_{r/2},T_r)<C(n)\delta r$ with $|\pi_r(x)-x|<C(n)\delta r$ for $x\in T_{r/2}$.
\item For $x\in T_{r/2}$ and a unit vector $v\in L_{x,\bar r_x}$ we have $||d\pi_r[v]|-1|<C(k)\beta_k(x, \tau^{-1}\overline r_x)^2$. 
\end{enumerate}
\end{theorem}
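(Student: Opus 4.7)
The plan is to repeat, almost verbatim, the proof of the classical Submanifold Approximation Theorem \ref{t:class_reif:approx_submanifold}, replacing the uniform $\delta$-bound on subspace variation by the scale-dependent $\beta_k$-bound afforded by Proposition \ref{p:neck_best_subspaces}. Concretely I will: (i) build a partition of unity $\{\phi_\alpha\}$ on $B_2$ by applying Lemma \ref{l:class_reif:partition} with the extended regularity scale $\bar r_y$ from \eqref{e:regularity_scale_r}, centering all balls at points $x_\alpha \in \cC$; (ii) average the best affine subspaces $L_\alpha \equiv L_{x_\alpha,\tau^{-1}\bar r_\alpha}$ against $\phi_\alpha$ to produce a smoothly varying $L_y$ at every $y \in B_2$; (iii) define $\Phi_r(y) \equiv \tfrac12 d(y,L_y)^2$ and set $T_r \equiv \Phi_r^{-1}(0)$; (iv) invoke the implicit function theorem to upgrade the zero-set to a graphical submanifold and extract the conclusions (1)--(6). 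The boundary condition $T_r \equiv L_{0,2}$ on $A_{3/2,2}$ is arranged by choosing the partition so that only the single chart $L_{0,2}$ is active near $\partial B_2$.

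\textbf{Subspace selection with $\beta_k$-control.} The partition step goes through unchanged because \eqref{e:regularity_scale_r} still has $|{\rm Lip}\,\bar r_y|\le \tau^{-2}$ (Exercise \ref{exer:regularity_scale}). The role Exercise \ref{exer:subspace_close1} played in the classical Subspace Selection Lemma \ref{l:class_reif:subspace_selection} is now taken over by Proposition \ref{p:neck_best_subspaces}, which at a pair of comparable balls controls $\hat\pi_{x,r}-\hat\pi_{y,s}$ by $\beta_k(x,10a\,r)$ rather than by $\delta$. Re-running the averaging and implicit-function-theorem extraction of the top-$k$ eigenspace of $M_y \equiv \sum_\alpha \phi_\alpha \hat\pi_\alpha$ then yields
\begin{align}
\bar r_y |\nabla \hat\pi_y|\,,\ |\nabla m_y - \hat\pi_y|\ \leq\ C(n,\epsilon,\nu)\,\beta_k(y,\tau^{-1}\bar r_y)\,,
\end{align}
together with the corresponding Hessian bounds, and the analogs of Theorem \ref{t:class_reif:approx_dist} then transfer directly to $\Phi_r$ with $\delta$ replaced everywhere by $\beta_k(y,\tau^{-1}\bar r_y)$. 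Conditions (n1), (n3) and Exercise \ref{exer:beta_continuity2} control the integrated error across scales.

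\textbf{Deducing the conclusions and the main obstacle.} Property (3) is obtained as in the classical proof by applying the implicit function theorem to $F(y,z) \equiv \nabla_z \Phi_r$, which simultaneously yields $\pi_r$ and (4). The existence of a zero of $\Phi_r$ near each $x \in \cC$ is produced by the degree argument from the proof of Theorem \ref{t:class_reif:approx_dist}.1; combined with condition (n1) this gives (1). Property (5) is the distance comparison between $L_y$ at consecutive dyadic scales, and telescoping (5) from $T_r$ back out to $T_{\bar r_y}$ using Exercise \ref{exer:beta_continuity2} produces the scale-by-scale integral estimate in (2). The crucial square gain in (6) follows from Lemma \ref{l:linear_projection_squaregain} applied to $\hat L_{x,\bar r_x}$ and $T_{\pi_r(x)}T_r$, whose Hausdorff distance is $O(\beta_k(x,\tau^{-1}\bar r_x)\bar r_x)$ by (2) and Exercise \ref{exer:graphical_subspace:2}. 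The main obstacle I anticipate is reconciling the weak Lipschitz bound $|\nabla \bar r_y|\le \tau^{-2}$ with the spectral gap required for $M_y$ to have a well-defined smooth top-$k$ eigenspace: this is precisely the reason one cannot simply glue the subspaces $L_\alpha$ directly (cf.\ remark (nr6)), and forces the detour through the auxiliary function $\Phi_r$. Smallness of $\delta$ relative to both $\epsilon$ and $\tau$ is what closes this gap.
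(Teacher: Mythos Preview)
Your proposal is correct and follows essentially the same approach as the paper: the paper also proves a neck-region Subspace Selection Lemma (Lemma \ref{l:subspace_selection}) and Distance Approximation Theorem (Theorem \ref{t:approx_dist}) by replacing the role of Exercise \ref{exer:subspace_close1} with Proposition \ref{p:neck_best_subspaces}, builds the partition of unity at scale $\tau^2\bar r_y$ (Lemma \ref{l:rect_reif:partition}), defines $\Phi_r(y)=\tfrac12|y-m_y|^2$, and then states that the remainder is verbatim the classical argument of Section \ref{ss:class_reif:subman_approx}. One small clarification: the ``obstacle'' you flag is slightly misidentified---the spectral gap for $M_y$ is automatic once $|M_y-\hat\pi_\beta|<C\delta$ (it has nothing to do with $|\mathrm{Lip}\,\bar r_y|$), and the real reason for routing through $\Phi_r$ rather than gluing graphs directly is that neighboring chart radii $r_\alpha$ can differ by a factor of $\tau^{-2}$, which would wreck a naive graph-gluing but is harmless for the level-set construction.
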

\begin{remark}
Note that if we take $T_1$ to be the affine subspace $L_{0,2}$ with $\pi_1$ the orthogonal projection map then the above holds with $r=1$.  It will be convenient  to make this choice.	
\end{remark}
\begin{remark}
Note the square gain in $(6)$ on the $\beta$-number estimate.
\end{remark}

\vspace{.3cm}

Let us begin with some exercises.  If the reader completed the exercises of Section \ref{ss:background:submanifolds} and Section \ref{ss:class_reif:subman_approx} then these are almost the same:

\begin{exercise}\label{exer:approx_subman:tangent_space}
Use Exercise \ref{exer:graphical_subspace:2}, Theorem \ref{t:approx_submanifold}.2 and Theorem \ref{t:approx_submanifold}.3 to see that for each $x\in T_r$ if $L\equiv T_xT_r$ is the tangent space at $x$, then $d_{GH}(L\cap B_1(x),L_{x,\overline r_x}\cap B_1(x))\leq C(n)\beta_k(x,\tau^{-1}r)$.
\end{exercise}

\begin{exercise}\label{exer:approx_subman:tangent_space2}
Use the last exercise and Theorem \ref{t:approx_submanifold}.6 and Theorem \ref{t:approx_submanifold}.4 to see that for each $x\in T_r$ if $v\in T_xT_r$ is a unit tangent vector\footnote{Recall $T_xT_r$ is an affine subspace, we say $v\in T_xT_r$ is a unit vector if $|v-x|=1$.  That is, as an element of the associated linear subspace $v$ is a unit norm vector.} at $x$, then $||d\pi_r[v]|-1|<C(n)\beta_k(x, \tau^{-1}\overline r_x)^2$.
\end{exercise}

\begin{exercise}\label{exer:approx_subman:local_bilipschitz}
Let $x,y\in T_{r}$ with $|x-y|\leq 10 r_x$.  Let $\sigma:[0,1]\to T_{r}$ be the curve connecting $x$ and $y$ defined by $\sigma(t) =\pi_{r}\big((1-t)x+ty\big)$.  Use Theorem \ref{t:approx_submanifold}.4 to show the length $|\sigma|$ of $\sigma$ satisfies $(1-C(n)\delta)|x-y|\leq |\sigma|\leq (1+C(n)\delta)|x-y|$.  

Hint:  Write the length $|\sigma|=\int_0^1|\dot\sigma|$ and argue as is outlined in Exercise \ref{exer:class_reif:approx_subman:1} to estimate $|\dot \sigma|$.
\end{exercise}

The above exercise is telling us that locally the intrinsic and extrinsic geometry of $T_r$ are the same.  The proof of Theorem \ref{t:neck_structure} in the next subsection will implicitly prove that this holds globally.

\vspace{.3cm}
\subsection{Proof of Neck Structure Theorem \ref{t:neck_structure} given the Approximating Manifold Theorem \ref{t:approx_submanifold}}

The beginning of the proof is similar to the proof of the classical Reifenberg Theorem \ref{t:classical_reif} given Theorem \ref{t:class_reif:approx_submanifold}.  However, the key estimates are different as we now need to conclude bilipschitz control over the submanifolds $T_r$, not just bih\"older, and we need to conclude a mass bound on the neck region $\mu(\cN)$.  \\

Begin by considering the radii $r_i = 2^{-i}$ and the submanifolds $T_i = T_{r_i}$ from Theorem \ref{t:approx_submanifold}.  Let $\pi_i = \pi_{r_i}\cap T_{i+1}:T_{i+1}\to T_i$ be the projection map from Theorem \ref{t:approx_submanifold} restricted to $T_{i+1}$.  Let us define the maps
\begin{align}
	&\Pi_{i,j} \equiv \pi_{i-1}\circ\cdots\circ \pi_j:T_i\to T_j\notag\\
	&\Pi_i\equiv \Pi_{i,0}:T_i\to T_0\equiv L\, .
\end{align}
Note that the $\Pi_i$ are necessarily diffeomorphisms which equal the identity in $A_{3/2,2}(0)$.  Note first the weak estimate, which follows from Exercise \ref{exer:class_reif:approx_subman:3} and Theorem \ref{t:approx_submanifold}.5:
\begin{align}\label{e:neck_str:1}
	|\Pi_{i,j}(x)-x|\leq \sum |\pi_\ell\big(\Pi_{i,\ell+1}(x)\big)-\Pi_{i,\ell+1}(x)|<C(n)\delta \sum_i^j r_\ell\leq C(n)\delta r_j\, .
\end{align}

With this in hand we turn to our first main Claim of the result:\\

{\bf Claim 1: If $x\in T_i$ and $v\in T_xT_i$ is a unit tangent vector then $||d\Pi_{i}[v]|-1|<C(n)\delta$}.  \\

Note by the chain rule that $d\Pi_{i,j}[v] = d\pi_{j}[d\Pi_{i,j-1}[v]]$.  Thus using \eqref{e:neck_str:1}, Exercise \ref{exer:beta_continuity} and Exercise \ref{exer:approx_subman:tangent_space2} we have that 
\begin{align}
||d\Pi_{i,j}[v]|-|d\Pi_{i,j-1}[v]||\leq C(n)\beta_k(x,\tau^{-1}r_j)^2|d\Pi_{i,j-1}[v]|\, ,	
\end{align}
and hence
\begin{align}
\big(1-C(n)\beta_k(x,\tau^{-2}r_j)^2\big)|d\Pi_{i,j-1}[v]|\leq |d\Pi_{i,j}[v]|\leq \big(1+C(n)\beta_k(x,\tau^{-1}r_j)^2\big)|d\Pi_{i,j-1}[v]|\, .
\end{align}
Composing the upper bounds gives us
\begin{align}
	|d\Pi_{i,j}[v]|<\prod_i^j\big(1+ C(n)\beta_k(x,\tau^{-1} r_\ell)^2\big)\, .
\end{align}

\begin{exercise}
	Use a geometric series to show if $c_i<<10^{-1}$ then $\prod_i (1+c_i)\leq 1+2\sum_i c_i$.
\end{exercise}

By this exercise we have if$C(n)\beta_k(x,\tau^{-1} r_\ell)^2<C(n)\delta<<1$ for each $r_\ell$ then we can conclude the estimate
\begin{align}
	|d\Pi_{i,j}[v]|&<1+C(n)\sum_i^j\beta_k(x,\tau^{-1} r_\ell)^2\notag\\
	&\leq 1+C(n)\int_{r_i}^{2\tau^{-1}r_j}\beta_k(x,s)^2\frac{ds}{s}\, ,\notag\\
	&\leq 1+C(n)\delta\, ,
\end{align}
where the middle estimate follows from Exercise \ref{exer:beta_continuity2} and the last estimate follows from the definition of a Neck Region.  The lower bound is the same, which finishes the proof of the Claim. $\qed$\\

Let us now see that $\Pi_i:T_i\to L$ is a bilipschitz map\footnote{One should specify whether $T_i$ is given the intrinsic or extrinsic geometry.  The next Claim is for the extrinsic geometry, however a very similar proof goes through to prove the bilipschitz estimate for the intrinsic geometry as well.}.  Precisely:\\

{\bf Claim 2:  Let $x,y\in T_i$, then $(1-C(n)\delta)|x-y|\leq |\Pi_i(x)-\Pi_i(y)|\leq (1+C(n)\delta)|x-y|$} .\\

Let us first consider the case when $|x-y|\leq 10r_i$ and let $\gamma:[0,1]\to T_i$ be the smooth curve $\gamma(t) =\pi_{\bar r}\big((1-t)x+ty\big)$ which connects $x$ and $y$.  Note that by Exercise \ref{exer:approx_subman:local_bilipschitz} we have
\begin{align}
(1-C(n)\delta)|x-y|\leq |\gamma|\leq (1+C(n)\delta)|x-y|\, .	
\end{align}
Now consider the curve $\Pi_i\circ\gamma$ connecting $\Pi_i(x)$ and $\Pi_i(y)$.  Then using Claim 1 we have the estimate
\begin{align}
|\Pi_i(x)-\Pi_i(y)|&\leq |\Pi_i\circ\gamma| = \int_0^1 |d\Pi_i[\dot\gamma]|\leq\int_0^1 |\dot\gamma| +\int_0^1\Big||d\Pi_i[\dot\gamma]|-|\dot\gamma|\Big|\notag\\
&\leq (1+C(n)\delta)|\gamma|\leq (1+C(n)\delta)|x-y|\, ,	
\end{align}
which completes half the estimate.  The other half is the same, beginning with the curve $\gamma:[0,1]\to L$ given by $\gamma(t) = (1-t)\Pi_t(x)+t\Pi_i(y)$.  Then we can similarly estimate
\begin{align}
|x-y|\leq |\Pi_i^{-1}\circ \gamma|\leq (1+C(n)\delta)|\gamma|=(1+C(n)\delta)|\Pi_i(x)-\Pi_i(y)|\, .	
\end{align}
Thus we have proven the result when $|x-y|\leq 10r_i$.  Let us now consider the general case.

For $x,y\in T_i$ let $r_j$ be such that $r_j\leq |x-y|\leq 2r_j$.  Then by the weak estimate \eqref{e:neck_str:1} we have
\begin{align}\label{e:neck_str:3}
(1-C(n)\delta)|x-y|\leq |\Pi_{i,j}(x)-\Pi_{i,j}(y)|\leq (1+C(n)\delta)|x-y|\, .
\end{align}
In particular $|\Pi_{i,j}(x)-\Pi_{i,j}(y)|\leq 10r_j$ and thus we can estimate
\begin{align}
	(1-C(n)\delta)|\Pi_{i,j}(x)-\Pi_{i,j}(y)|&\leq|\Pi_j\circ\Pi_{i,j}(x)-\Pi_j\circ\Pi_{i,j}(y)|\notag\\
	&\leq (1+C(n)\delta)|\Pi_{i,j}(x)-\Pi_{i,j}(y)|\, .
\end{align}
Using that $\Pi_i=\Pi_j\circ\Pi_{i,j}$ and combining with \eqref{e:neck_str:3} we have finished the proof of the Claim. $\qed$\\

Let us now consider the bilipschitz maps $\Phi_i\equiv \Pi_i^{-1}:L\to T_i\subseteq \dR^n$.  Note that $\Phi_i(L)=T_i$ for every $i$.  Using the standard Ascoli convergence we can now limit
\begin{align}\label{e:neck_structure_proof:bilipschitz_map}
\Phi_i\to \Phi:L\to T\equiv \Phi(L)\subseteq \dR^n\, ,	
\end{align}
so that $\Phi$ satisfies the bilipschitz estimates of Claim 2.  The following are intuitively helpful:

\begin{exercise}
	Show $T_i\to T$ in the Hausdorff topology.
\end{exercise}

\begin{exercise}\label{exer:T_affine_approx}
Use Theorem \ref{t:approx_submanifold}.2 and the Ascoli convergence to show for $x\in T$ that $d_H(T\cap B_r(x),L_{x,r}\cap B_r(x))<C(n)\delta r$ for $r\geq r_x$, where $r_x$ is as in \eqref{e:regularity_scale}.
\end{exercise}

This finishes the proof of Theorem \ref{t:neck_structure}.2, so that we are left with needing to prove the mass estimate on the Neck Region from Theorem \ref{t:neck_structure}.3.  To begin let us associate to $T$ its Hausdorff measure
\begin{align}\label{e:neck_structure_proof:T_measure}
\mu_T \equiv \cH^k\cap T\, .
\end{align}

Recall from \eqref{e:regularity_scale} the regularity scale extension of $r_x$.  An important consequence of the bilipschitz estimate of Claim 2 is the Alhfors regularity of $\mu_T$ and control on the $\beta$-numbers of $\mu$ on balls centered on $T$: \\

{\bf Claim 3:  For each $x\in T$ and $r_x\leq r<4$ we have $(1-C(n)\delta)\omega_k r^k \leq \mu_T(B_r(x))\leq (1+C(n)\delta)\omega_k r^k$. and $\int_{r_x}^4 \beta_k(x,s)^2\frac{ds}{s}<C(n)\delta$.}\\

Let us first prove the Alhfors regularity.  Indeed, recall from Section \ref{s:rect_reif:haus_content} that if $U\subseteq \dR^n$ then
\begin{align}
	\cH^k(U) = \lim_{s\to 0} \cH^k_r(U) = \lim_{r\to 0}\inf\{\omega_k\sum r_i^k: U\subseteq \bigcup B_{r_i}(x_i)\text{ and }r_i\leq s\}\, .
\end{align}
Now consider $U=B_r(x)$ and let $U'=\Phi^{-1}(B_r(x))\subseteq \dR^k$.  Note by the bilipschitz condition on $\Phi$ that if $x'=\Phi^{-1}(x)$ then 
\begin{align}
	B_{(1-C(n)\delta)r}(x')\subseteq U'\subseteq B_{(1+C(n)\delta)r}(x')\, .
\end{align}
Note by the bilipschitz estimate we similarly have that for any covering $B_r(x)\subseteq \bigcup B_{r_i}(x_i)$:
\begin{align}
	U'\subseteq \bigcup B_{r'_i}(x'_i) \text{ where }x'_i=\Phi^{-1}(x_i) \text{ and }r'_i = (1+C(n)\delta)r_i\, ,
\end{align}
and notice for this covering we have
\begin{align}
\omega_k\sum (r'_i)^k =(1+C(n)\delta)^k\omega_k\sum r_i^k\, .
\end{align}
In particular, since this held for {\it any} covering of $B_r(x)$ we get the estimate
\begin{align}
\cH^k(B_r(x))&\geq (1-C(n)\delta)\cH^k(U')\geq (1-C(n)\delta)\cH^k(B_{(1+C(n)\delta)r}(x'))\notag\\
&\geq (1-C(n)\delta)\omega_k r^k\, ,
\end{align}
which proves half the estimate.  By instead beginning with any covering of $U'$ and using $\Phi$ to construct a covering of $B_r(x)$ we obtain the reverse estimate by a verbatim argument.

Now let us focus on the $\beta$-number estimate.  Indeed, let $y\in T$ and then by the definition of $r_y$ we can find $x\in \cC\cap \overline B_{r_y}(y)$ such that $r_x\leq \tau^{-2}r_y$.  Thus by Exercise \ref{exer:beta_continuity} we can estimate
\begin{align}
\int_{r_x}^4 \beta(x,s)^2 \frac{ds}{s} \leq C(n)	 \int_{\tau^{-2}r_y}^4 \beta(y,s)^2\frac{ds}{s}\leq C(n)\delta\, ,
\end{align}
as claimed.  $\qed$\\

Let us now turn our attention to the proof of the mass bound $\mu(\cN)<C(n)\delta$.  The main technical estimate is the following, which tells us we can locally control the mass in $\cN$ by the square of the $\beta$-numbers:\\

{\bf Claim 4: Let $y\in \cN$ with $d\equiv d(y,T)$, then $\mu\big(B_{d/10}(y)\big)<B(n)^2\int_{B_{2d}(y)}\beta_k(x,10d)^2 d\mu_T$}.\\

Recall by Claim 3 that $\mu_T(B_{2d}(y))\leq C(n) d^k$, and thus we can find $z\in T\cap B_{2d}(y)$ such that $\beta_k(z,10d)^2\leq C(n) d^{-k}\int_{B_{2d}(y)}\beta_k(x,2d)^2 d\mu_T$.  Now let $L=L_{z,2\tau^{-2}d}$ and note by Exercise \ref{exer:T_affine_approx} we have that $d(T\cap B_{2d}(z),L\cap B_{2d}(z))<<\frac{1}{10}d$.  On the other hand, let us take  $Y\in B_{d/10}(y)$ to be the generalized center of mass of $B_{d/10}(y)$, as in Definition \ref{d:generalized_COM}.  In particular, by the last sentence we must have $d(Y,L)>\frac{1}{2}d$.  

However, let us now assume $\mu\big(B_{d/10}(y)\big)\geq B(n)^2\int_{B_{2d}(y)}\beta_k(x,10d)^2 d\mu_T$, and in particular by applying Exercise \ref{exer:beta_continuity} we have
\begin{align}
\mu\big(B_{d/10}(y)\big)\geq \frac{B^2}{C^2} \beta_k(z,10 d)^2 d^k\geq \frac{B(n)^2}{C(n)^2}\beta_k(y,d/10)^2\,d^k\, .	
\end{align}
If we apply Lemma \ref{l:COM_control} we can then get that $d(Y,L)<\frac{C(n)}{B(n)} d$, so that if we have chosen $B(n)>>C(n)$ we get that $d(Y,L)<\frac{1}{2}d$, which is our desired contradiction. $\qed$\\

Let us now finish the proof that $\mu(\cN)<C(n)\delta$.  To begin let us apply the usual Vitali process (see lemma \ref{l:vitali}, Exercise \ref{exer:covering:2} and Exercise \ref{exer:covering:3}) to cover $\cN$:
\begin{align}
\cN\subseteq \bigcup_{y\in \cN}B_{d_y/50}(y)\subseteq \bigcup_i B_{d_i/10}(y_i)\, ,	
\end{align}
where $d_y\equiv d(y,T)$ and $\{B_{d_i/50}(y_i)\}$ are disjoint.
\begin{exercise}
Show for $\alpha\in\dN$ that if we consider the collection of balls $\{B_{2d_i}(y_i)\}$ with $2^{-\alpha}\leq d_i\leq 2^{-\alpha+1}$, then each $z\in \dR^n$ intersects at most $C(n)$ balls from this collection.  Hint:  See Exercise \ref{exer:covering:2}.
\end{exercise}

Now using Claim 4 we can estimate

\begin{align}
\mu(\cN) &\leq \sum_i \mu(B_{d_i/10}(y_i))\leq B(n)^2\sum_i	 \int_{B_{2d_i}(y_i)}\beta_k(x,10d_i)^2 d\mu_T\, ,\notag\\
&= B(n)^2\sum_\alpha \sum_{2^{-\alpha}\leq d_i<2^{-\alpha+1}}	 \int_{B_{2d_i}(y_i)}\beta_k(x,10d_i)^2 d\mu_T\, ,\notag\\
&\leq C(n) \int_{B_1}\sum_{2^{-\alpha}\geq r_x} \beta_k(x,10^2\cdot 2^{-\alpha})^2 d\mu_T\, ,\notag\\
&\leq C(n) \int_{B_1}\int_{r_x}^2\beta_k(x,s)^2\frac{ds}{s}d\mu_T \leq C(n)\delta \mu_T(B_1)\notag\\
&\leq C(n)\delta\, ,
\end{align}
as claimed, which finishes the proof of Theorem \ref{t:neck_structure}. $\qed$

\vspace{.3cm}

\subsection{Proof of Submanifold Approximation Theorem \ref{t:approx_submanifold}}

What remains is to prove the Submanifold Approximation Theorem \ref{t:approx_submanifold}.  From this point forward the proof will mimick very closely the proof of the classical Reifenberg we presented in Lecture 1.  We will first need to prove the subspace selection lemma, which is essentially nothing more than smoothing out our local choices of best subspaces at each point and scale.  We will use this smooth collection of subspaces in order to define our approximate distance function, which will itself be a morse bott function with good estimates.  The zero level set of these approximate distance functions will be our choice of submanifolds $T_r$.  Because most of the proofs are almost verbatim to those in Lecture 1 we will mainly emphasize the slight changes (which are mostly in the form of what scale we are working on using a more local error), and leave the rest as an exercise.  

\subsubsection{Subspace Selection Lemma}
Let us begin with a statement of the Subspace Selection Lemma:

\begin{lemma}[Subspace Selection Lemma]\label{l:subspace_selection}
	Let $\cN = B_1\setminus \overline B_{r_x}(\cC)$ be a $(k,\delta,\epsilon,\nu )$-neck region with $r>0$.  Then for each $y\in B_1$ there exists a $k$-dimensional affine subspace $L_{y}$ where if $\hat \pi_y = \hat \pi_{L_y}$ is the linear projection map and $m_y\equiv \pi_y[y]$ then:
\begin{enumerate}  
\item $L_{y}$ varies smoothly in $y$ with $\overline r_y|\partial_i \hat\pi_{y}|, |\partial_i m_y - \hat\pi_y|\leq C(n,\epsilon,\nu)\beta_k(y,\tau^{-1} \overline r_y)$ and $r^2_y |\partial_i\partial_j \hat\pi_{y}|, r_y|\partial_i\partial_j m_y|\leq C(n,\epsilon,\nu)\beta_k(y,\tau^{-1}\overline r_y)$. 
\item We have $L_y\cap B_{10\overline r_y}(y)\subseteq B_{C(n)\tau \bar r_x}(\cC)$ and $\cC\cap B_{10\overline r_y}(y))\subseteq B_{C\delta \overline r_x}(L_y)$.
\item We have $d_H(L_y\cap B_{10\bar r_y}(y),L_{y,\tau^{-1}\bar r_y}\cap B_{10\bar r_y}(y))<C(n)\beta_k(y,\tau^{-1}\bar r_y)$.
\end{enumerate}
\end{lemma}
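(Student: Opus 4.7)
The plan is to mirror the proof of the classical Subspace Selection Lemma \ref{l:class_reif:subspace_selection}, but with two key upgrades: (i) the partition of unity must be adapted to the neck scale $\bar r_y = r_y \vee r$ rather than a uniform scale, and (ii) every comparison between locally chosen best subspaces must be made quantitative in the $\beta$-number at scale $\tau^{-1}\bar r_y$ rather than in the loose parameter $\delta$, because property $(1)$ asks for gradient bounds $\propto \beta_k(y,\tau^{-1}\bar r_y)$ rather than $\propto \delta$. The essential new tool enabling this upgrade is Proposition \ref{p:neck_best_subspaces}, which plays the role that Exercise \ref{exer:subspace_close1} played in the classical case, together with the non-collapsing condition (n2) and the Reifenberg-type condition (n1).

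First I will build a Whitney-type covering $\{B_{\tilde r_\alpha}(x_\alpha)\}$ with centers $x_\alpha\in \cC$ (extended by a few trivial balls in the region where $\cC$ is empty, handled via $T_1=L_{0,2}$) and radii $\tilde r_\alpha \sim \bar r_{x_\alpha}/100$, exactly as in Lemma \ref{l:class_reif:partition}; the same argument gives a subordinate partition of unity $\phi_\alpha$ with $|\partial^{(j)}\phi_\alpha|\leq C(n,j)\tilde r_\alpha^{-j}$ and bounded overlap. For each $\alpha$ set $L_\alpha \equiv L_{x_\alpha,\tau^{-1}\tilde r_\alpha}$ as in \eqref{e:outline_neck_structure:min_subspace}. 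I then define
\begin{align}
\ell_y \equiv \sum_\alpha \phi_\alpha(y)\,\pi_\alpha[y], \qquad M_y \equiv \sum_\alpha \phi_\alpha(y)\,\hat\pi_\alpha,
\end{align}
and let $\hat\pi_y$ be the projection onto the span of the top $k$ eigenvectors of $M_y$; the affine space $L_y$ is then determined by $\hat\pi_y$ and the base point $m_y=\pi_y[y]$, obtained from $\ell_y$ by a one-step correction (cf.\ Exercise \ref{exer:class_reif:subspace_selection}).

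The derivative estimates in (1) follow the same outline as Claims 1--3 of the classical proof. The crucial new input is that whenever two balls $B_{\tilde r_\alpha}(x_\alpha)$ and $B_{\tilde r_\beta}(x_\beta)$ overlap, Proposition \ref{p:neck_best_subspaces}.1 gives $d_H(L_\alpha\cap B_{\tilde r_\alpha},L_\beta\cap B_{\tilde r_\alpha})\leq C(n,\epsilon,\nu)\beta_k(x_\alpha,\tau^{-1}\tilde r_\alpha)\,\tilde r_\alpha$, and likewise $\|\hat\pi_\alpha-\hat\pi_\beta\|\leq C(n,\epsilon,\nu)\beta_k$; here the non-collapsing (n2) and the requirement $r\geq 10^2 r_x$ in the proposition are exactly what allow this estimate. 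Using $\sum_\alpha \partial_i\phi_\alpha=0$ and inserting $\hat\pi_\beta$ for a fixed nearby $\beta$ with $y\in B_{4\tilde r_\beta}(x_\beta)$, the sum telescopes to produce $\tilde r_y|\partial M_y|\leq C(n,\epsilon,\nu)\beta_k(y,\tau^{-1}\bar r_y)$; the same argument bounds $|\partial^2 M_y|$ and $|\partial \ell_y-\hat\pi_y|$. The passage from $M_y$ to $\hat\pi_y$ is identical to Claim 3 of the classical case: apply the implicit function theorem \ref{t:background:implicit_function} to the variational problem $\hat\pi_y = \arg\sup_{\hat L^k}\mathrm{tr}_L(M_y)$, using the eigenvalue gap $\ge 1-C\delta$ to obtain invertibility.

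Property (2) is a direct consequence of the Reifenberg-type condition (n1) applied at an appropriate nearby center $x_\beta\in\cC\cap B_{8\tilde r_y}(y)$: the $L_\beta$-estimates on $\cC$ near $B_{\tilde r_y}$ transfer to $L_y$ at the loss of $\beta_k\leq C\delta$. Property (3) is just a rearrangement of Proposition \ref{p:neck_best_subspaces}.1 comparing $L_y$ to any of its constituent $L_\alpha$ and then to $L_{y,\tau^{-1}\bar r_y}$. The main obstacle is bookkeeping: one must systematically replace the constant $\delta$ that appeared everywhere in the classical proof with the sharp quantity $\beta_k(y,\tau^{-1}\bar r_y)$ wherever a gain is needed (especially to feed the square-gain in Theorem \ref{t:approx_submanifold}.6), while simultaneously verifying that the weaker bound $\beta_k\leq C(n)\delta$ (Remark after Definition \ref{d:neck_region}) suffices to satisfy the hypotheses of Proposition \ref{p:neck_best_subspaces} and the implicit function theorem uniformly across all overlapping Whitney balls.
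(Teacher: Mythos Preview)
Your proposal is correct and follows essentially the same approach as the paper: build a Whitney partition of unity adapted to $\bar r_y$, average the best subspaces via $\ell_y=\sum\phi_\alpha\pi_\alpha[y]$ and $M_y=\sum\phi_\alpha\hat\pi_\alpha$, extract $\hat\pi_y$ from the top-$k$ eigenspace of $M_y$ via the implicit function theorem, and replace the role of Exercise~\ref{exer:subspace_close1} by Proposition~\ref{p:neck_best_subspaces} so that all errors are recorded as $\beta_k(y,\tau^{-1}\bar r_y)$ rather than $\delta$. One technical correction: the paper takes Whitney centers at arbitrary points of $B_2$ (not in $\cC$) with radius $r_\alpha=\tau^2\bar r_{x_\alpha}$, and since $|\mathrm{Lip}\,\bar r_y|\leq\tau^{-2}$ (Exercise~\ref{exer:regularity_scale}) rather than $\leq 100$, your choice $\tilde r_\alpha\sim\bar r_{x_\alpha}/100$ is too large to guarantee bounded overlap---you need $\tilde r_\alpha\sim\tau^2\bar r_{x_\alpha}$, after which your ad hoc handling of the region far from $\cC$ also becomes unnecessary because $\bar r_y\geq d(y,\cC)$ automatically enlarges the Whitney balls there.
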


The proof of the subspace selection lemma follows almost exactly from Section \ref{ss:class_reif:dist_approx}.  The main technical distinction is that we work on scale $\tau$ instead of scales which are powers of $10$, and our estimates are now in terms of $\beta_k$ on that scale instead of $\delta$, however this changes almost nothing in the line by line.  We begin by building a partition of unity on the ball $B_2$ which will be used for our gluing process: 

\begin{lemma}\label{l:rect_reif:partition}
There exists a covering $B_2\subseteq \bigcup_\alpha B_{r_\alpha}(x_\alpha)$, where $r_\alpha \equiv \tau^2 \bar r_{x_\alpha} $, and smooth nonnegative functions $\phi_\alpha$ such that
\begin{enumerate}
\item $\{ B_{\frac{1}{4}r_\alpha}(x_\alpha)\}$ are disjoint.
\item For each $y\in B_2$ we have $\#\{x_\alpha: y\in B_{4 r_\alpha}(x_\alpha)\}<C(n)$.
\item $\sum \phi_\alpha = 1$ on $B_2$ with $\text{supp}\,\phi_\alpha \subseteq B_{4 r_\alpha}(x_\alpha)$.
\item $|\partial^{(k)}\phi_\alpha|\leq C(n,k) \tilde  r_\alpha^{-k}$.
\end{enumerate}
\end{lemma}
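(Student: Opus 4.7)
The lemma is the direct neck region analogue of Lemma \ref{l:class_reif:partition} with the radius function $\tilde r_\alpha = (d(x_\alpha,S) \vee r)/100$ replaced by $\rho(x) \equiv \tau^2 \bar r_x$, so I would run the same three-step scheme: pick a Vitali-maximal family of centers, verify the covering and bounded-overlap properties, then rescale a single bump function to obtain the partition of unity. The only input beyond the classical argument is Exercise \ref{exer:regularity_scale}, which gives the Lipschitz control on the regularity scale extension $r_x$ (and hence on $\bar r_x = r_x \vee r$).

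For the covering and $(1)$, I would take $\{x_\alpha\} \subseteq B_2$ maximal among collections whose quarter-tubes $\{B_{r_\alpha/4}(x_\alpha)\}$ are pairwise disjoint, where $r_\alpha \equiv \tau^2 \bar r_{x_\alpha}$. Maximality then gives, for any $y \in B_2$, some $\alpha$ with $B_{r_y/4}(y) \cap B_{r_\alpha/4}(x_\alpha) \neq \emptyset$, and combining $|y - x_\alpha| \leq (r_y + r_\alpha)/4$ with the Lipschitz bound on $\bar r_x$ forces $r_y$ and $r_\alpha$ to be comparable and in particular $|y - x_\alpha| < r_\alpha$, so $y \in B_{r_\alpha}(x_\alpha)$. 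For the bounded overlap in $(2)$, I would fix $y$, collect those $\alpha$ with $y \in B_{4r_\alpha}(x_\alpha)$, and use the Lipschitz bound to show all such $r_\alpha$ are comparable to $r_y$; the disjoint balls $B_{r_\alpha/4}(x_\alpha)$ then all fit inside a common ball of radius $C r_y$ around $y$ and occupy a definite fraction of its volume, so a packing count caps their number by $C(n)$.

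Step 3 is then routine once $(1)$ and $(2)$ are in hand: fix a smooth nonnegative $\phi': \dR^n \to \dR$ with $\phi' \equiv 1$ on $B_1(0)$ and $\text{supp}\,\phi' \subseteq B_4(0)$, set $\phi'_\alpha(x) \equiv \phi'(r_\alpha^{-1}(x - x_\alpha))$, and normalize $\phi_\alpha \equiv \phi'_\alpha/\sum_\beta \phi'_\beta$. The covering property forces $\sum_\beta \phi'_\beta(y) \geq 1$ everywhere on $B_2$ (some $\phi'_\alpha(y) = 1$), while scale-invariance of $\phi'$ gives $|\partial^{(k)}\phi'_\alpha| \leq C(n,k)r_\alpha^{-k}$. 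The quotient rule combined with $(2)$ and $r_\beta \approx r_\alpha$ on $\text{supp}\,\phi_\alpha$ then propagates the derivative estimates to the normalized $\phi_\alpha$, giving $(3)$ and $(4)$.

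The main obstacle, and what genuinely distinguishes this from the classical lemma, is the comparability of radii in Step 2: in Lemma \ref{l:class_reif:partition} the Lipschitz constant $|\nabla \tilde r_y| \leq 1/100$ made comparability immediate with uniform numerical constants, whereas here the trivial Lipschitz bound $|\text{Lip}\,\bar r_x| \leq \tau^{-2}$ leaves $\rho = \tau^2 \bar r_x$ with Lipschitz constant on the order of $1$. The point is therefore to carefully verify that the exact factor of $\tau^2$ in $r_\alpha$ against the $\tau^{-2}$ in the Lipschitz bound produces the same kind of two-sided comparability on overlap scales (that is, $r_\beta \approx r_\alpha$ on $B_{4r_\alpha}(x_\alpha) \cap B_{4r_\beta}(x_\beta)$), and this uses the disjointness built into the definition of a $(k,\delta,\epsilon,\nu)$-neck region rather than a generic Lipschitz radius function; once this is settled the rest of the proof is a verbatim transcription of the classical argument.
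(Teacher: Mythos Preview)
Your approach matches the paper's, which simply refers back to Lemma~\ref{l:class_reif:partition} and Exercise~\ref{exer:covering:3}; the three-step scheme (maximal family, comparability plus packing, normalized bumps) is exactly right, and you are in fact more careful than the paper in flagging the Lipschitz-constant issue.

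However, your final paragraph does not close the gap you identify. Appealing to ``the disjointness built into the definition of a neck region'' only recovers $|\text{Lip}\,\bar r_x| \leq \tau^{-2}$ (this is remark (nr5)), which as you correctly observe leaves $\rho \equiv \tau^2 \bar r_x$ with Lipschitz constant of order $1$; from $y \in B_{4r_\alpha}(x_\alpha)$ one then obtains only the one-sided bound $r_\alpha \geq r_y/5$, and the packing argument for (2) genuinely needs two-sided comparability. A concrete resolution is to note that the \emph{extended} regularity scale~\eqref{e:regularity_scale}, as literally written, satisfies the much sharper bound $|\text{Lip}\,r_x| \leq 1$: if $s<r_x$ and $|x-x'|=d$ then every $y\in\cC\cap B_{s-d}(x')$ lies in $B_s(x)$, so $r_y\geq\tau^{-2}s\geq\tau^{-2}(s-d)$ and hence $r_{x'}\geq s-d$. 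Consequently $\rho=\tau^2\bar r_x$ has Lipschitz constant $\leq\tau^2\ll 1$, and the classical argument of Lemma~\ref{l:class_reif:partition} (equivalently Exercise~\ref{exer:covering:3} with that exercise's parameter taken to be the present $\tau^2$) runs verbatim with ample room in all constants.
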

\begin{proof}
See Lemma \ref{l:class_reif:partition} and Exercise \ref{exer:covering:3}.
\end{proof}

In order to prove the Subspace Selection Lemma we follow the path of Lecture 1.  Using the partition from above let us define for each $\alpha$ the affine subspace
\begin{align}
L_\alpha \equiv L_{x_\alpha, 10^{-1}\tau^{-1}r_\alpha}	\, .
\end{align}
Morally, we simply want to define $L_y\equiv \sum \phi_\alpha L_\alpha$ and check what estimates hold.  To do this by hand we make the observation, as in Section \ref{ss:class_reif:dist_approx}, that to construct an affine subspace $L_y$ one requires constructing a point $\ell_y\in L_y$ and a linear subspace $\hat\pi_y$.  We define them by the formulas:
\begin{align}
	&\ell_y\equiv \sum_\alpha \phi_\alpha(y) \pi_\alpha[y]\, ,\notag\\
	&M_y\equiv \sum_\alpha \phi_\alpha(y)\hat \pi_\alpha\, ,\notag\\
	&\hat\pi_y \equiv \text{span}\{e_1(y),\ldots e_k(y)\}\, ,
\end{align}
where $e_1(y),\ldots,e_k(y)$ are the $k$-largest eigenvectors of $M_y$.  The proof of the Subspace Selection Lemma is now the same as in Section \ref{ss:class_reif:dist_approx}.

\vspace{.3cm}

\subsubsection{Distance Approximation Theorem}

The statement and proof of the Distance Approximation Theorem is almost verbatim as in Section \ref{ss:class_reif:dist_approx}.  Recall the definition 

\begin{align}\label{e:approx_distance}
\Phi_r(y)\equiv \frac{1}{2}d(y,L_y)^2 = \frac{1}{2}|y-\pi_y(y)|^2\equiv \frac{1}{2}|y-m_y|^2\, .
\end{align}

Then the main estimates on $\Phi_r$ are the following:

\begin{theorem}[Approximate Distance Function]\label{t:approx_dist}
Let $\cN = B_1\setminus \overline B_{r_x}(\cC)$ be a $(k,\delta,\nu,\epsilon )$-neck region with $\bar r_y$ defined in \eqref{e:regularity_scale} and $\Phi_r$ defined in \eqref{e:approx_distance}.  Then for each $y\in B_2$ the following is satisfied:
	\begin{enumerate}
	\item For $\ell\in L_{y}\cap B_{10^2 \overline r_y}(y)$ $\exists!$ $z_\ell\in \hat L^\perp_{y}+\ell$ such that $\Phi_r(z_\ell)=0$.
	\item $\Big||\nabla \Phi_r|^2 - 4\Phi_r\Big|(y)\leq C(n,\nu,\epsilon )\beta_k(y,\tau^{-1}\overline r_y)\, \Phi_r(y)$.
	\item $|\nabla^2\Phi_r(y)-\hat\pi_{y}^\perp|<C(n,\nu,\epsilon )\beta_k(y,\tau^{-1}\overline r_y)$.
	\item $|\nabla^{(k)}\Phi|(y)\leq C(n,k,\nu,\epsilon )\beta_k(y,\tau^{-1}\overline r_y)\, \overline r_y^{2-k}$ for $k\geq 3$.
	\end{enumerate}
\end{theorem}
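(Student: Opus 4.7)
The plan is to run the proof of Theorem \ref{t:class_reif:approx_dist} from Lecture~1 essentially verbatim, substituting the scale-invariant error $\delta$ with the scale-localized $\beta$-number $\beta_k(y,\tau^{-1}\bar r_y)$ and the fixed scale $r$ with the regularity scale $\bar r_y$. All the ingredients we need have been packaged into the Subspace Selection Lemma \ref{l:subspace_selection}, which provides smooth assignments $y \mapsto L_y$ with the bounds
\begin{align}
\bar r_y\,|\partial \hat\pi_y|,\; |\partial m_y - \hat\pi_y|,\; \bar r_y^2|\partial^2 \hat\pi_y|,\; \bar r_y |\partial^2 m_y| \leq C(n,\epsilon,\nu)\beta_k(y,\tau^{-1}\bar r_y),
\end{align}
together with the approximation $L_y \cap B_{10\bar r_y}(y) \subseteq B_{C\tau \bar r_y}(\cC)$ and $\cC \cap B_{10\bar r_y}(y) \subseteq B_{C\delta \bar r_y}(L_y)$.

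For (1), I would repeat the degree-theoretic argument: fix $\ell \in L_y \cap B_{10^2 \bar r_y}(y)$ with some $y\in\cC$ and consider the slice $\hat L_y^\perp + \ell$. Using a smooth cutoff $\phi$ supported in $B_{10^2 \bar r_y}(\ell)\cap(\hat L_y^\perp+\ell)$ and identically $1$ on a smaller ball, define a map $\pi(z) \equiv z - \phi(z)\cdot\bigl(L_z \cap (\hat L_y^\perp + \ell)\bigr)$. The intersection point varies smoothly and lies $C(n)\beta_k(y,\tau^{-1}\bar r_y)\bar r_y$-close to $\ell$ (using the Lipschitz variation of $L_z$ from Lemma \ref{l:subspace_selection}.1 and the closeness of $L_z$ to $L_y$ on the relevant scale). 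Hence $\pi$ is a degree-$1$ self-map of the perpendicular slice fixing the boundary, so it has a zero $z_\ell$, at which $z_\ell \in L_{z_\ell}$ and thus $\Phi_r(z_\ell)=0$. Uniqueness follows because the vertical derivative $\partial_z \pi$ is within $C(n,\epsilon,\nu)\beta_k$ of the identity, hence invertible.

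For (2)--(4), I would differentiate $\Phi_r(y) = \tfrac12|y-m_y|^2$ directly. The first derivative is
\begin{align}
\partial_i \Phi_r = \langle \hat\pi_y^\perp[e_i], y-m_y\rangle + \langle(\hat\pi_y - \partial_i m_y)[e_i], y - m_y\rangle,
\end{align}
so squaring and using $|\partial m_y - \hat\pi_y|\leq C\beta_k(y,\tau^{-1}\bar r_y)$ gives
\begin{align}
\bigl||\nabla \Phi_r|^2 - |y-m_y|^2\bigr| \leq C(n,\epsilon,\nu)\beta_k(y,\tau^{-1}\bar r_y)\,|y-m_y|^2,
\end{align}
which is exactly (2) after replacing $|y-m_y|^2$ by $2\Phi_r$. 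For (3), compute $\partial_i\partial_j \Phi_r$ as in Section~\ref{ss:class_reif:dist_approx}: the leading term is $\langle\hat\pi_y^\perp[e_i],\hat\pi_y^\perp[e_j]\rangle$, and every remaining term carries either a factor of $\partial m_y - \hat\pi_y$ or $\bar r_y \partial^2 m_y$ or $\bar r_y \partial \hat\pi_y$ times $|y-m_y| = O(\bar r_y)$, each bounded by $C\beta_k(y,\tau^{-1}\bar r_y)$. Estimate (4) is the same type of computation for higher derivatives, yielding the scale-invariant bound $\bar r_y^{2-k}\beta_k(y,\tau^{-1}\bar r_y)$.

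The main obstacle will be the bookkeeping in (1): unlike the classical Reifenberg case where $\delta$ is a uniform constant, here the smoothness of $y \mapsto L_y$ is controlled only by the locally varying $\beta_k(y,\tau^{-1}\bar r_y)$, and one must verify that this still suffices to run the degree argument on slices of radius $\sim \bar r_y$. The saving grace is that by the integral condition (n3) in Definition \ref{d:neck_region} and the continuity Exercise \ref{exer:beta_continuity}, $\beta_k(y,\tau^{-1}\bar r_y) \leq C(n)\delta$ uniformly, so the perturbation from identity in $\partial_z \pi$ is always small enough to preserve transversality and degree. Everything else is clean differentiation on top of Lemma \ref{l:subspace_selection}.
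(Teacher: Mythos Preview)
Your proposal is correct and matches the paper's approach exactly: the paper's proof consists of the single sentence that, given the Subspace Selection Lemma \ref{l:subspace_selection}, the argument is ``almost verbatim from Section \ref{ss:class_reif:dist_approx}.'' You have carried out precisely that verbatim translation, including the degree-theoretic argument for (1) and the direct differentiation for (2)--(4), with the correct substitution of $\beta_k(y,\tau^{-1}\bar r_y)$ for $\delta$ and $\bar r_y$ for $r$.
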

\begin{proof}
	With the Subspace Selection Lemma \ref{l:subspace_selection} the proof of Theorem \ref{t:approx_dist} is almost verbatim from Section \ref{ss:class_reif:dist_approx}.

\end{proof}

\vspace{.3cm}

\subsubsection{Proof of Submanifold Approximation Theorem \ref{t:approx_submanifold}}

With the the construction of the approximate distance function and studied some of its properties in hand, the proof of the Submanifold Approximation Theorem is now verbatim that in Section \ref{ss:class_reif:subman_approx}.

\vspace{.5cm}

\section{Proof of Neck Decomposition Theorem \ref{t:neck_decomposition}}

We now focus our attention on the proof of Theorem \ref{t:neck_decomposition}.  The proof is essentially just an involved covering argument, and was first introduced in \cite{JiNa_L2},\cite{NaVa_EnId}.  The sharp content estimates of Theorem \ref{t:neck_decomposition}.3 will be a consequence of the Neck Structure Theorem from the previous section.  Let us first restate the theorem for the readers convenience:\\

{\bf Theorem. }[Theorem \ref{t:neck_decomposition} Restated]
	Let $\mu$ be a Borel measure on $B_1$ and assume for each $x\in B_1$ we have $\int_0^{2} \beta_k(x,s)^2\frac{ds}{s}\,d\mu\leq \Gamma$.
 Then for each $\nu,\epsilon ,\delta>0$ with $\epsilon<\epsilon(n)$ and $\delta<\delta(n,\epsilon,\nu )$ $\exists$ a covering $B_1 \subseteq \cS^{-}\cup \cS^k\cup \cS^{+}\,  \text{ with }$
\begin{align}
&\cS^{+} = \bigcup_a \big(\cN_a\cap B_{r_a}\big) \cup \bigcup_b B_{r_b}(x_b)\, \text{ and }\;\; \cS^k = \bigcup_a \cC_{0,a}\, ,
\end{align}
and such that
\begin{enumerate}
\item 	$\cN_a = B_{2r_a}(x_a)\setminus \overline B_{r_{a,x}}(\cC_a)$ are $(k,\delta,\epsilon,\nu )$-neck regions.  In particular, $\mu(\cN_a)\leq C(n)\delta\, r_a^k$ and $\cC_{0,a}$ are $k$-rectifiable by Theorem \ref{t:neck_structure}.
\item $B_{r_b}(x_b)$ satisfies the measure constraint $\mu(B_{2r_b})<C(n)\nu \, r_b^k$ .
\item We have the content estimates $\sum r_a^k + \sum r_b^k < C(n,\delta,\epsilon,\nu ,\Gamma)$ and packing estimate $\cP^k(\cS^-\cup\cS^k)<C(n,\delta,\epsilon,\nu ,\Gamma)$,
\item We have the Hausdorff measure estimate $\cH^{k}(\cS^{-})=0$.
\end{enumerate}

\vspace{.2cm}

We begin by discussing a variety of notation which will be convenient throughout the proof:

\begin{definition}
We define the $k$-dimensional distortion $D_k(x,r)$ of a measure $\mu$ by
\begin{align}
D_k(x,r) = \int_0^r \beta_k(x,s)^2 \frac{ds}{s}\, .	
\end{align}
\end{definition}

The following short exercises give some good intuition for the basic properties and behavior of $D_k$:
\begin{exercise}
Show the following:
\begin{enumerate}
\item $D_k(x,r)$ is monotone in $r$ and $D_k(x,r)=D_k(x,s)$ for some $s<r$ iff $\text{supp}\mu\cap B_r(x) \subseteq L^k$ for some $k$-dimensional affine subspace.
\item $C(n)^{-1}\beta_k(x,r)\leq D_k(x,2r)-D_k(x,r)\leq C(n) \beta_k(x,2r)$.
\item If $r_i=2^{-i}$ then $C(n)^{-1}\sum_{r_i\leq r} \beta_k(x,r_i)^2 \leq D_k(x,r)\leq C(n)\sum_{r_i\leq 2r} \beta_k(x,r_i)^2$.
\end{enumerate}
\end{exercise}

There are two primary pieces of information to keep track of during the proof.  The first is the distortion drop from scale to scale, the second is a lower mass bound on balls.  We formalize this with the following noncollapsing set:\footnote{Recall $\omega_n$ is the volume of the unit ball in $\dR^n$.}

\begin{definition}[Noncollapsing Set]\label{d:noncollapsing_set}
	Let $\epsilon, \nu>0$ be fixed, then we define the set of noncollapsed points:
\begin{align}
	V(x,r) \equiv \{y\in B_{r}(x):  \mu(B_{s}(y))> \nu\, s^k \text{ for } \epsilon r\leq s\leq r \}\, .\notag
\end{align}
\end{definition}
\begin{remark}
Recall the definition of noncollapsing as in Definition \ref{d:noncollapsing}.
\end{remark}

The following tells us that $V(x,r)$ must always live close to a best approximating subspace:

\begin{exercise}\label{exer:noncollapsed_subspace_approx}
If $\beta_k(x,2r)<\delta^2$ and $L=L_{x,2r}$ is a best affine subspace obtaining $\beta_k(x,2r)$, then for each $y\in V(x,r)$ we have $d(y,L)< (\epsilon+C(n,\epsilon)\nu^{-1/2}\delta^2)r$.  Hint:  Apply Lemma \ref{l:COM_control} to the center of mass $Y\in B_{\epsilon r}(y)$ and use the triangle inequality.
\end{exercise}

\vspace{.2cm}

\subsection{Proof Outline and Induction Step}

The Proof of Theorem \ref{t:neck_decomposition} will be done inductively on the size of the distortion.  

\vspace{.3cm}

\subsection{Notation and Ball Types}\label{ss:neck_decomp:notation}

In our notation for the proof during this section the subscript we use to denote a ball will always designate special structure of that ball.  All ball types will fall into the following categories:
\begin{enumerate}
\item[(a)] A ball $B_{r_a}(x_a)$ is associated with a $(k,\delta,\epsilon,\nu)$-neck region $\cN_a = B_{2r_a}(x_a)\setminus \overline B_{r_{a,x}}(\cC_a)$.
\item[(b)] A ball $B_{r_b}(x_b)$ satisfies $\mu(B_{2r_b}(x_b))<\nu r_b^k$.
\item[(c)] A ball $B_{r_c}(x_c)$ is such that $\beta_k(x_c,4r_c)<\delta^2$ and $V(x_c,r_c)$ is a $(k,2\epsilon)$-linearly independent set.
\item[(d)] A ball $B_{r_d}(x_d)$ is such that $V(x_d,r_d)$ is not a $(k,2\epsilon)$-linearly independent set.
\item[(e)] A ball $B_{r_e}(x_e)$ is such that $\beta_k(x_e,4r_e)>\delta^2$.
\item[(s)] A ball $B_{r_s}(x_s)$ is such that for each $y\in B_{r_s}(x_s)$ we have $D_k(y,2r_s)<\overline D - \delta^6$.  \footnote{In practice $\bar D=\sup_{B_R}D_k(x,R)$ will be the distortion of a potentially much larger ball.}
\item[(f)] A ball $B_{r_f}(x_f)$ is one for which we know nothing about.
\end{enumerate}


Before continuing let us discuss a little the role of each of these ball types.  The simplest two types are the $(a)$ and $(b)$ balls, as of course these are what we are wanting to construct in the theorem and there will be nothing left to do with them.  

Part of the proof will involve an induction on $D_k(x,r)$.  Therefore the $(s)$-balls will represent balls for which the distortion has strictly dropped, and therefore we will apply our inductive hypothesis to handle them.  Thus in practice we are also done on $s$-balls as well.  The $(f)$-balls will also require starting over on, however in practice when we label a ball an $(f)$-ball we will make sure it is only on a set of very small context, therefore starting over will be okay as the errors will become a geometric series, see Section \ref{ss:inductive_proof}.

The next easiest ball types to deal with are the $(d)$ and $(e)$ balls.  For a $(d)$-ball we will be able to cover all of $B_{r_d}(x_d)$ by $(b)$-balls away from a set of very small context of $(f)$-balls.  For an $(e)$-ball we will be able to entirely cover $B_{r_e}(x_e)$ by $(s)$-balls for which the distortion has strictly dropped, and thus we will be able to apply our inductive hypotheses to these new balls.

The most complicated ball type to deal with in the construction is therefore a $(c)$-ball.  The goal will be to build a neck region so that $B_{r_c}(x_c)\subseteq \cN \cup \overline B_{r_x}(\cC)$.  In order to proceed with the next step, this will have to be done in a maximal fashion so that each ball $B_{r_x}(x)$ with $x\in \cC$ is either a $(b)$,$(d)$, $(e)$ or $(s)$ ball.  Then using the Neck Structure Theorem \ref{t:neck_structure} in combination with the $(d)$ and $(e)$ coverings just discussed we can estimate the content of those balls we need to start over as being small.

\vspace{.2cm}
\subsection{Collapsing and $d$-Ball Covering}

Recall the definition of the noncollapsing set from Definition \ref{d:noncollapsing_set}.

\begin{proposition}[$d$-Ball Covering]\label{p:d_ball_covering}
Let $B_{r_d}(x_d)$ be such that $V(x_d,r_d)$ is not a $(k,2\epsilon)$-linearly independent set, then we can cover
\begin{align}
B_{r_d}(x_d)\subseteq \bigcup_b B_{r_{b}}(x_b)\cup \bigcup_f B_{r_{f}}(x_f)\, ,
\end{align}
such that $\mu(B_{r_{b}}(x_b))<\nu r_b^k$ for each $(b)$-ball and we have the estimates $\sum_b r_b^k < C(n,\epsilon) r_d^k$ and $\sum_f r_f^k < C(n)\epsilon\, r_d^k$.
\end{proposition}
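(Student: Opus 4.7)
The plan is to extract from the failure of $(k,2\epsilon)$-linear independence a $(k-1)$-dimensional affine subspace $L^{k-1}$ such that $V(x_d,r_d)\subseteq B_{2\epsilon r_d}(L^{k-1})$, and then split the cover in two: the thin tube around $L^{k-1}$ is handled by $(f)$-balls, whose $k$-content is small because $L^{k-1}$ is $(k-1)$-dimensional, while every point outside the tube lies outside $V(x_d,r_d)$ and is covered by $(b)$-balls directly from the low-mass property built into the complement of $V$.

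First I would run the standard greedy construction: pick $y_0\in V(x_d,r_d)$, and inductively choose $y_{i+1}\in V\setminus B_{2\epsilon r_d}\big(y_0+\vecspan\{y_1-y_0,\ldots,y_i-y_0\}\big)$, as in Definition \ref{d:linear_independence}. By hypothesis the process terminates at some $\ell\leq k-1$, and then $L^{k-1}$ can be taken to be any $(k-1)$-dimensional affine subspace containing $y_0+\vecspan\{y_i-y_0\}_{i=1}^\ell$; maximality of the selection gives $V(x_d,r_d)\subseteq B_{2\epsilon r_d}(L^{k-1})$. (If $V(x_d,r_d)=\emptyset$ this step is skipped and one proceeds directly to the $(b)$-ball covering below.)

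Next, the $(f)$-balls. A routine covering of the $(k-1)$-dimensional disk $L^{k-1}\cap B_{r_d}(x_d)$ by balls of radius $\epsilon r_d$ requires at most $C(n)\epsilon^{-(k-1)}$ balls, as in Exercise \ref{exer:covering:1}. Tripling the radius to $r_f\equiv 3\epsilon r_d$ produces a covering of the full tube $B_{2\epsilon r_d}(L^{k-1})\cap B_{r_d}(x_d)$ by balls $\{B_{r_f}(x_f)\}$ with
\begin{align}
\sum_f r_f^k\;\leq\; C(n)\,\epsilon^{-(k-1)}\cdot (3\epsilon r_d)^k\;=\;C(n)\,\epsilon\,r_d^k.
\end{align}
This is the step in which the $(k-1)$-dimensionality of the spanning set buys the missing power of $\epsilon$ relative to the naive $\epsilon^{-k}$ count, which is exactly what makes the $(f)$-content bound unconditional in $\epsilon$.

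Finally, the $(b)$-balls. Any point $y\in B_{r_d}(x_d)$ outside the tube lies outside $V(x_d,r_d)$, so by Definition \ref{d:noncollapsing_set} there is a scale $s_y\in[\epsilon r_d,r_d]$ with $\mu(B_{s_y}(y))\leq \nu\, s_y^k$. I would then apply the Vitali covering Lemma \ref{l:vitali} to $\{B_{s_y}(y)\}$ to extract a disjoint subfamily $\{B_{s_i/5}(y_i)\}$ whose fivefold enlargements $\{B_{s_i}(y_i)\}$ still cover the exterior of the tube, and these (up to a harmless redefinition of the constant $\nu$ to reconcile the $B_{2r_b}$ in the $(b)$-ball definition with the $B_{s_y}$ appearing in $V$) are the $(b)$-balls. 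Because $s_i\geq \epsilon r_d$ and the $\{B_{s_i/5}(y_i)\}$ are disjoint inside $B_{2r_d}(x_d)$, a volume pigeonhole bounds their number by $C(n)\epsilon^{-n}$, giving
\begin{align}
\sum_b r_b^k\;\leq\; C(n)\,\epsilon^{-n}\cdot r_d^k\;=\;C(n,\epsilon)\,r_d^k.
\end{align}
The argument has no real analytic obstacle — it is purely a soft geometric/covering estimate — and the only place one has to be careful is matching the $B_s$ vs $B_{2r_b}$ radius conventions between $V$ and the $(b)$-ball definition when reading off the mass bound after Vitali.
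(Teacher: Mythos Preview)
Your proposal is correct and follows essentially the same route as the paper: both find a $(k-1)$-plane $L^{k-1}$ with $V(x_d,r_d)\subseteq B_{2\epsilon r_d}(L^{k-1})$, cover the tube by $\sim\epsilon^{-(k-1)}$ balls of radius $\sim\epsilon r_d$ to get the $(f)$-balls with $\sum r_f^k\leq C(n)\epsilon\,r_d^k$, and cover the complement by $(b)$-balls using the low-mass scale guaranteed by $y\notin V(x_d,r_d)$ together with a Vitali-type disjointness argument and volume pigeonhole. The paper is terser (it takes an $\epsilon r_d$-dense set in $L\cap B_{r_d}$ and a maximal subcollection with $\{B_{r_b/10}\}$ disjoint rather than invoking Vitali explicitly), but the ideas and estimates are identical; your caution about the $B_{r_b}$ versus $B_{2r_b}$ convention is warranted, as the paper itself is slightly inconsistent there, and it is indeed absorbed into the $C(n)\nu$ in the statement of Theorem~\ref{t:neck_decomposition}.
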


\begin{proof}
Since $V(x_d,r_d)$ is not a $(k,2\epsilon)$-linearly independent set we can find a subspace $L^{k-1}$ such that $V(x_d,r_d)\subseteq B_{2\epsilon r_d}(L)$.  In particular, let $\{x_f\}\in B_{r_d}\cap L$ be $\epsilon r_d$-dense with $r_f\equiv 4\epsilon r_d$.  We see the $(f)$-balls satisfy the required property.  Now for each $x\not\in V(x_d,r_d)$ let $r_x\geq \frac{1}{2}\epsilon r_d$ be such that $\mu(B_{2r_x}(x))<\nu r_b^k$.  Let $\{B_{r_b}(x_b)\}$ be any maximal subset such that $B_{10^{-1}r_b}(x_b)$ are disjoint.  Then a simple volume estimate\footnote{See Exercise \ref{exer:covering:1} and Lemma \ref{l:class_reif:partition}.} shows that $\{B_{r_b}(x_b)\}$ satisfies the conditions of the proposition.
\end{proof}

\vspace{.2cm}

\subsection{Symmetry and $e$-Ball Covering}

\begin{proposition}[$e$-Ball Covering]\label{p:e_ball_covering}
Let $B_{r_e}(x_e)$ be such that $\beta_k(x_e,4r_e)>\delta^2$.  Then we can cover
\begin{align}
B_{r_e}(x_e)\subseteq \bigcup_b B_{r_{s}}(x_s)\, ,
\end{align}
such that $\sum r_s^k \leq C(n) r_e^k$ and $D_k(y,r_s)<D_k(y,10r_s)-C(n)\delta^4$ for each $y\in B_{r_s}(x_s)$.
\end{proposition}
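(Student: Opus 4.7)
The statement calls only for a covering whose content is bounded by $C(n)r_e^k$, so the obvious candidate is the trivial one-ball covering $B_{r_e}(x_e)\subseteq B_{r_s}(x_s)$ with $r_s=r_e$ and $x_s=x_e$; then $\sum r_s^k = r_e^k$ automatically. The real content of the proposition is therefore the pointwise distortion drop: for every $y\in B_{r_e}(x_e)$ one must produce
\begin{align*}
D_k(y,10r_e)-D_k(y,r_e)\;\geq\; C(n)\delta^4.
\end{align*}
My plan is to extract this drop entirely from the hypothesis $\beta_k(x_e,4r_e)>\delta^2$ via two uses of the scale--continuity principle of Exercise \ref{exer:beta_continuity}.

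\textbf{Key step.} First I would transfer the $\beta$-lower bound from $x_e$ to $y$. Since $|y-x_e|<r_e$ one has $B_{4r_e}(x_e)\subseteq B_{5r_e}(y)$, so for the best subspace $L=L_{y,5r_e}$ realising $\beta_k(y,5r_e)$,
\begin{align*}
(5r_e)^{2+k}\beta_k(y,5r_e)^2 = \int_{B_{5r_e}(y)} d(\cdot,L)^2\,d\mu \;\geq\; \int_{B_{4r_e}(x_e)} d(\cdot,L)^2\,d\mu \;\geq\; (4r_e)^{2+k}\beta_k(x_e,4r_e)^2,
\end{align*}
using that $L$ is only a competitor for $\beta_k(x_e,4r_e)$. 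This gives $\beta_k(y,5r_e)^2\geq (4/5)^{2+k}\delta^4$. A second, identical integral comparison (or Exercise \ref{exer:beta_continuity} directly) propagates this lower bound up to all scales $s\in[5r_e,10r_e]$, losing only a dimensional factor $2^{2+k}$, and I would conclude $\beta_k(y,s)^2\geq c(n)\delta^4$ throughout that interval.

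\textbf{Conclusion.} Integrating the distortion formula over $s\in[5r_e,10r_e]$,
\begin{align*}
D_k(y,10r_e)-D_k(y,r_e) \;\geq\; \int_{5r_e}^{10r_e}\beta_k(y,s)^2\,\frac{ds}{s} \;\geq\; c(n)\delta^4\log 2 \;\geq\; C(n)\delta^4,
\end{align*}
which is the required inequality with $r_s=r_e$. There is no substantive obstacle here: the entire argument is just careful bookkeeping of scale-continuity constants, and the only thing to monitor is that the combined dimensional factors $(4/5)^{2+k}\cdot 2^{-(2+k)}\cdot\log 2$ survive as a positive $c(n)$ so that the final $C(n)\delta^4$ is genuinely independent of $\delta$.
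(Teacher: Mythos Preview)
Your proof is correct and follows essentially the same approach as the paper. The only difference is that the paper covers $B_{r_e}(x_e)$ by a maximal collection of balls of radius $r_e$ centered at points of $B_{r_e}(x_e)$ (so $y$ may lie in $B_{2r_e}(x_e)$, forcing the integration interval to start at $6r_e$), whereas you take the single ball $B_{r_e}(x_e)$ itself; your choice is simpler, gives $\sum r_s^k = r_e^k$ rather than $C(n)r_e^k$, and allows you to integrate from $5r_e$. The scale-continuity transfer of the $\beta$-lower bound and the subsequent integration are identical in both arguments.
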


\begin{proof}
Let $r_s \equiv r_e$ with $\{x_s\}\in B_{r_e}(x_e)$ a maximal subset such that $B_{10^{-1}r_s}(x_s)$ are disjoint.  In particular, $\{B_{r_s}(x_s)\}$ is a covering of $B_{r_e}(x_e)$ and $\sum r_s^k \leq C(n) r_e^k$.  Finally, let $y\in B_{2r_e}(x_e)$ and consider
\begin{align}
D_k(y,10r_s)-D_k(y,r_s) &= \int_{r_e}^{10r_e}\beta_k(y,r)^2 \frac{dr}{r}\geq \int_{6r_e}^{10r_e}\beta_k(y,r)^2 \frac{dr}{r}\notag\\
&\geq C(n)\beta_k(x_e,4r_e)^2>C(n)\delta^4\, ,\notag
\end{align}
where the last line uses Exercise \ref{exer:beta_continuity}.
\end{proof}

\vspace{.2cm}
\subsection{Neck Regions and $c$-Ball Covering}

\begin{proposition}[$c$-Ball Covering]\label{p:c_ball_covering}
Let $B_{r_c}(x_c)$ be such that $\beta_k(x_c,4r_c)\leq \delta^2$ and $V(x_c,r_c)$ is a $(k,2\epsilon)$-linearly independent set.  Then for $\delta<\delta(n,\epsilon,\nu)$ there exists a $(k,\delta,\epsilon,\nu)$-neck region 
\begin{align}
\cN = B_{2r_c}(x_c)\setminus \bigcup_x \overline B_{r_x}(\cC)\, ,
\end{align}
such that for each $x\in \cC$ with $r_x>0$ we have that $B_{r_x}(x)$ satisfies one of the following:
\begin{enumerate}
\item[(d)] $B_{r_x}(x) = B_{r_d}(x_d)$ is such that $V(x,r_x)$ is not a $(k,2\epsilon)$-linearly independent set.
\item[(e)] $B_{r_x}(x) =B_{r_e}(x_e)$ is such that $\beta_k(x,4r_x)>\delta^2$.
\item[(s)] $B_{r_x}(x) =B_{r_s}(x_s)$ is such that for $y\in B_{r_x}(x)$ we have $D_k(y,2r_x)<\sup_{B_{2r_c}(x_c)}D_k(y,4r_c) - \delta^6$.
\end{enumerate}
That is, $B_{r_x}(x)$ is either a $(d)$, $(e)$ or $(s)$-ball.
\end{proposition}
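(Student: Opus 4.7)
The plan is a scale-by-scale stopping-time construction: starting from the single base center $x_c$ at scale $2r_c$, at every stage refine each active center by a factor of $\tau$, and freeze a center as soon as any of (d), (e), (s) triggers. Concretely, set $\bar D \equiv \sup_{y \in B_{2r_c}(x_c)} D_k(y, 4r_c)$, scales $\sigma_j \equiv \tau^j \cdot 2 r_c$, and $\cC^0 = \{x_c\}$. Inductively, at stage $j$ test each active center $x \in \cC^j$ against the three exits: (d) $V(x, \sigma_j)$ fails $(k, 2\epsilon)$-linear independence; (e) $\beta_k(x, 4\sigma_j) > \delta^2$; (s) $\sup_{y \in B_{\sigma_j}(x)} D_k(y, 2\sigma_j) < \bar D - \delta^6$. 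Freeze $x$ with $r_x = \sigma_j$ if any triggers. Otherwise all three fail, so $\beta_k(x, 4\sigma_j) \leq \delta^2$ and $V(x, \sigma_j)$ contains a $(k, 2\epsilon)$-LI set of noncollapsed points; set $L = L_{x, 4\sigma_j}$ and choose a maximal $\tau \sigma_j$-separated set of children, each obtained as the center-of-mass of a small noncollapsed ball sitting along $L \cap B_{(1-2\tau)\sigma_j}(x)$. By Lemma \ref{l:COM_control} each such child is $C(n)\nu^{-1/2}\delta^2 \sigma_j$-close to $L$. Define $\cC$ to be the union of all frozen centers (with their $r_x$) together with the limit points of never-frozen chains (with $r_x = 0$).

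Given this construction, the axioms (n1), (n2) and the disjointness of $\{\overline B_{\tau^2 r_x}(x)\}$ follow by routine bookkeeping. Disjointness is built in: inside each scale the net is $\tau \sigma_j$-separated, and restricting children to $B_{(1-2\tau)\sigma_j}(x)$ plus never-subdividing frozen ancestors prevents cross-scale collisions. For (n1), at a center $x$ and scale $s \in [r_x, 2r_c]$, pick the closest ancestor scale $\sigma_j \sim s$ at ancestor $x_j$; Proposition \ref{p:neck_best_subspaces} makes $L_{x, s}$ close to $L_{x_j, 4\sigma_j}$ at distance $\lesssim \delta^2 s$, the $\tau \sigma_j$-density of the net in $L_{x_j, 4\sigma_j}$ yields $L_{x,s} \cap B_s \subseteq B_{\tau s}(\cC)$, and the converse $\cC \cap B_s \subseteq B_{\delta s}(L_{x,s})$ comes from every descendant center being within $\delta^2 \sigma_j$ of its ancestor's best subspace. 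For (n2), at every scale $r \in [\tau^{-1} r_x, 1]$ the corresponding ancestor was non-(d)-frozen, so its ball houses noncollapsed $(k, 2\epsilon)$-LI points by definition.

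The subtle axiom, and the main obstacle, is (n3): $\int_{r_x}^{4r_c} \beta_k(x, s)^2 \frac{ds}{s} < \delta$. The pointwise bound $\beta_k \leq \delta^2$ at every non-(e) ancestor scale (promoted to $\beta_k(x, s)^2 \lesssim \delta^4$ for $s \in (r_x, 4r_c)$ via Exercise \ref{exer:beta_continuity}) alone yields only $C(n)\delta^4 \log(r_c/r_x)$, which is unbounded as $r_x \to 0$. The saving device is built into the (s)-exit: for every non-(s)-frozen ancestor $x_j$ at scale $\sigma_j$, by definition there is a witness $y_j \in B_{\sigma_j}(x_j)$ with $D_k(y_j, 2\sigma_j) \geq \bar D - \delta^6$; combined with $D_k(y_j, 4r_c) \leq \bar D$ this gives $\int_{2\sigma_j}^{4r_c} \beta_k(y_j, s)^2 \frac{ds}{s} \leq \delta^6$. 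Since $|y_j - x| \lesssim \sigma_j$, Exercise \ref{exer:beta_continuity} transfers this to $\int_{4\sigma_j}^{4r_c} \beta_k(x, s)^2 \frac{ds}{s} \leq C(n) \delta^6$. Sending $\sigma_j \downarrow r_x$ (or $\sigma_j \downarrow 0$ for never-frozen $x$) and adding the single short window $(r_x, 4\tau^{-1} r_x)$ estimated using $\beta_k \leq \delta^2$ at the immediate non-(e) parent gives $\int_{r_x}^{4r_c} \beta_k(x, s)^2 \frac{ds}{s} \leq C(n)(\delta^6 + \delta^4) < \delta$ for $\delta$ small. The interlocking of the (e)- and (s)-exits --- neither alone suffices, but together they furnish both a pointwise $\beta_k$-control and a total-distortion budget --- is precisely why the three exit conditions are designed the way they are.
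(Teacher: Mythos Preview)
Your approach is essentially the paper's: a scale-by-scale stopping-time construction, freezing centers when any of (d), (e), (s) triggers, then passing to the Hausdorff limit. Your verification of (n3) via the interplay of the (e)- and (s)-exits --- the (s)-failure at every unfrozen ancestor furnishing a witness $y_j$ with $\int_{2\sigma_j}^{4r_c}\beta_k(y_j,s)^2\frac{ds}{s}\leq\delta^6$, which transfers to $x$ since $|x-y_j|\lesssim\sigma_j$ --- is correct and is exactly the intended mechanism (the paper leaves this check as an exercise).

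One small gap: you select children as centers-of-mass of \emph{noncollapsed} balls sitting along $L$. But the non-(d) hypothesis only supplies $k+1$ such balls (a $(k,2\epsilon)$-LI set), not a $\tau\sigma_j$-dense family along $L$; if $\mu$ concentrates near a lower-dimensional slice of $L$ your net of children can have genuine gaps, and then the (n1) inclusion $L\cap B_s\subseteq B_{\tau s}(\cC)$ fails. The paper avoids this by placing children \emph{directly on} $L_{x,4\sigma_j}\cap B_{2\sigma_j}(x)$ as a maximal net with $\{B_{\tau^2 r_y}\}$ disjoint (hence automatically $\tau$-dense in $L$), and letting each child's radius $r_y\in[\tau\sigma_j,\sigma_j]$ be the smallest scale at which $V(y,\cdot)$ is still $(k,2\epsilon)$-LI. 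A child sitting over a hole in $\mu$ then simply gets frozen immediately as a (d)-ball, which is harmless for the neck-region axioms.
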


\begin{proof}
The proof is purely constructive, and will be done inductively on scales $s_\alpha = \tau^{\alpha}r_c$.  At each step of the induction we will have built a $(k,\delta,\epsilon,\nu)$-neck region $\cN_\alpha=B_{2r_c}\setminus \overline B_{r^\alpha_{x}}(\cC_\alpha)$, and for the next step of the induction we will recover those balls $\{B_{r^\alpha_x}(x^\alpha)\}$ which are not $(d)$, $(e)$ or $(s)$-balls.\\

{\bf Construction Step:}. Let us begin by describing the constructive step which will be applied each time we need to recover a ball.  Let $B_r(x)$ be a ball for which none of the conditions $(d)$, $(e)$ or $(s)$ hold.  In this case let us define the best affine subspace
\begin{align}
L_{x,4r} \equiv \arg\min_{L}\, (4r)^{-2-k}\int_{B_{4r}(x)} d(y,L)^2\,d\mu[y]\, ,
\end{align}
so that $\beta(x,4r)^2\leq \delta^2$ is obtained by $L_{x,4r}$.  Note by Exercise \ref{exer:noncollapsed_subspace_approx} that for $\delta<\delta(n,\epsilon,\nu)$ we have
\begin{align}
V(x,2r)\subseteq B_{4\epsilon\, r}(L_{x,4r})\, .
\end{align}

Now for each $y\in L_{x,4r}\cap B_{2r}(x)$ let 
\begin{align}
r_y\equiv \inf_s\{\tau r\leq s\leq r: V(y,s) \text{ is $(k,2\epsilon)$-linearly independent in }B_s(y)\}\, .	
\end{align}
Now let $\{B_{r_{i}}(x_i)\}\subseteq \{B_{r_y}(y)\}_{y\in L}$ be a maximal subcollection such that $\{B_{\tau^2 r_{i}}(x_i)\}$ are disjoint.  For future notational use we define $\cC_{x,2r}=\{x_i\}$ with $r_{x_i}=r_i\geq \tau r$.  In particular, $L_{x,4r}\cap B_{2r}(x)\subseteq B_{10^{-1}\tau r}(\cC_{x,2r})$ and $\cC_{x,2r}\subseteq L_{x,4r}$.\\

{\bf Base Step:}  Let us start with the beginning ball $B_{r_c}(x_c)$ and observe that by assumption the hypotheses of the constructive step apply.  Thus we can apply the constructive step and define $\cC_0 \equiv \cC_{x_c,2r_c}$ to be the center points with the radius function $r^0_x$.  The following exercise follows immediately from the construction:

\begin{exercise}
Show that $\cN_1\equiv B_{2r_c}(x_c)\setminus \overline B_{r^0_x}(\cC_0)$ is a $(k,\delta,\epsilon,\nu)$-neck region. \\
\end{exercise}

{\bf Inductive Step:}  Let us now assume we have constructed a $(k,\delta,\epsilon,\nu)$-neck region $\cN_\alpha=B_{2r_c}\setminus \overline B_{r^\alpha_{x}}(\cC_\alpha)$.  We wish to define from this the next step of the inductive process.  Let us first break the centerpoints $\cC_\alpha$ into two groups:
\begin{align}
&\cC_\alpha = \cC^g_\alpha\cup \cC^b_\alpha\, ,\notag\\
&\cC^g_\alpha = \{x\in \cC: B_{r^{\alpha}_{x}}(x) \text{ satisfies either condition $(d)$, $(e)$, or $(s)$}\}	\, ,\notag\\
&\cC^b_\alpha = \{x\in \cC: r^\alpha_x = \tau^\alpha r_c \text{ and } B_{r^{\alpha}_{x}}(x) \text{ does not satisfy conditions $(d)$, $(e)$, or $(s)$}\}\, .
\end{align}
We will let $\cC^g_\alpha\subseteq \cC_{\alpha+1}$ with $r^{\alpha+1}_{x}\equiv r^\alpha_{x}$ for each $x\in \cC^g_\alpha$.  It is then the balls represented by $\cC^b_\alpha$ which must be recovered.

Thus for each $x\in \cC^b_\alpha$ we can apply the Constructive Step in order to produce a best subspace $L_{x}=L_{x,4r^\alpha_x}$ and an associated collection of balls $\{B_{r^{\alpha+1}_{y}}(y)\}$, where $y\in \cC_{x,2r^\alpha_x}$ and $r^{\alpha+1}_{y} \geq \tau^{\alpha+1} r_c$.  Note that by construction we have that $\{B_{\tau^2 r^{\alpha+1}_{y}}(y)\}$ are disjoint and $\cC_{x,2r^\alpha_x}\subseteq L_x$.  If we do this for each ball from $\cC^b_\alpha$ we can choose a maximal subset:
\begin{align}
&\tilde\cC_{\alpha+1}\subseteq  \bigcup_{x\in\cC^b_\alpha}\cC_{x,2r^\alpha_x}	\, , \text{ such that }\notag\\
&\{B_{\tau^2 r^{\alpha+1}_x}(x)\}_{\tilde\cC_{\alpha+1}}\cup \{B_{\tau^2 r^{\alpha+1}_x}(x)\}_{\cC^g_{\alpha}} \text{ are disjoint}.
\end{align}

We now define $\cC_{\alpha+1}\equiv \tilde \cC_{\alpha+1}\cup \cC^g_\alpha$.  It is somewhat tedious but otherwise straightforward to check the following:
\begin{exercise}
Show $\cN_{\alpha+1}=B_{2r_c}\setminus \overline B_{r^{\alpha+1}_{x}}(\cC_{\alpha+1})$ is a $(k,\delta,\epsilon,\nu)$-neck region.
\end{exercise}

{\bf Limiting Step.}  We now want to finish the proof of Proposition \ref{p:c_ball_covering} by taking a limit $\cN=\lim\cN_\alpha$.  Let us see how this limit is carefully constructed.  Note first that $\cC_\alpha$ is a sequence of bounded closed subsets and thus after passing to a subseqence we can take a Hausdorff limits
\begin{align}
\cC = \lim_{\alpha\to  \infty} \cC_\alpha\, .	
\end{align}

Let us define the radius function $r_x:\cC\to \dR^+$ as follows.  For $x\in \cC$ if for all $\alpha$ sufficiently big we have $x\in \cC_\alpha$ with $r_x^\alpha=r_x^{\alpha+1}$, then we define $r_x\equiv \lim r^\alpha_x$.  Otherwise for $x\in \cC$ we define $r_x\equiv 0$.  Though we do not strictly need it, the following is a useful exercise for building intuition about the construction:
\begin{exercise}
Show one does not need to pass to a subsequence in order to take the Hausdorff limit $\cC = \lim\cC_\alpha$.	Show that if $x^\alpha\to x$ with $x^\alpha\in\cC^\alpha$ then $r^\alpha_x\to r_x$. 
\end{exercise}

It is clear that conditions $(n1)$, $(n2)$ and $(n3)$ all pass to the limits in this construction so that $\cN\equiv B_{2r_c}\setminus\overline B_{r_x}(\cC)$ is a $(k,\delta,\epsilon,\nu)$-neck region.  What is left is to check that if $r_x>0$ then $B_{r_x}(x)$ satisfies one of the conditions $(d)$, $(e)$, or $(s)$.  However, by construction if $r_x>0$ then there exists $\alpha$ for which $x\in \cC^\alpha$ with $r_x=r^\alpha_x$.  In the inductive step the ball $B_{r^\alpha_x}(x)$ remained in $\cC_{\alpha+1}$ only if it satisfied $(d)$, $(e)$, or $(s)$.  This finishes the proof of Proposition \ref{p:c_ball_covering}. 
\end{proof}

\vspace{.5cm}

\subsection{Inductive Proof of the Neck Decomposition Theorem}\label{ss:inductive_proof}

We now finish the proof of the Neck Decomposition Theorem.  The idea will be simply to continuously apply the covering propositions of the previous subsections.  We begin by applying each of the Propositions once in order to get a first covering:

\begin{proposition}[Induction Step 1]\label{p:inductive1}
	Let $\mu$ be a Borel measure and assume for each $x\in B_2$ that $\int_0^4 \beta_k(x,r)^2\frac{dr}{r}\leq \Gamma$ .  Then for each $\nu ,\epsilon,\delta>0$ with $\epsilon<\epsilon(n,\nu)$ and $\delta<\delta(n,\nu,\epsilon)$ $\exists$ a covering $$B_1 \subseteq \cN\cup\cC_0\cup \bigcup_b B_{r_b}(x_b)\cup \bigcup_s B_{r_s}(x_s)\cup \bigcup_f B_{r_f}(x_f)$$ 
such that
\begin{enumerate}
\item 	$\cN = B_{2}\setminus \overline B_{r_{x}}(\cC)$ is a $(k,\delta,\epsilon,\nu)$-neck region.  In particular, $\mu(\cN)\leq C(n)\delta$ and $\cC_{0}$ is $k$-rectifiable by Theorem \ref{t:neck_structure}.
\item $B_{r_b}(x_b)$ satisfy the measure constraints $\mu(B_{2r_b})<\nu \, r_b^k$ .
\item For each $y\in B_{r_s}(x_s)$ we have $D_k(y,2r_s)<\Gamma-\delta^6$.
\item We have the content estimates $\cH^k(\cC_0)+ \sum r_b^k + \sum r_s^k < C(n,\epsilon)$,
\item We have the content estimate $\sum r_f^k\leq C(n)\epsilon$.
\end{enumerate}	
\end{proposition}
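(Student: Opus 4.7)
The plan is to apply the three covering Propositions \ref{p:d_ball_covering}, \ref{p:e_ball_covering}, \ref{p:c_ball_covering} sequentially, using a case analysis based on the classification of $B_1$ as a ball of type $(b), (c), (d),$ or $(e)$ from Section \ref{ss:neck_decomp:notation}. First classify $B_1(0)$: if $\mu(B_2) < \nu$, it is a $(b)$-ball and the covering is trivial; if $\beta_k(0,4) > \delta^2$, it is an $(e)$-ball and Proposition \ref{p:e_ball_covering} applied directly covers $B_1$ by $(s)$-balls with the required distortion drop (taking $\cN = \emptyset$); if $\beta_k(0,4) \leq \delta^2$ but $V(0,1)$ is not $(k,2\epsilon)$-linearly independent, it is a $(d)$-ball and Proposition \ref{p:d_ball_covering} gives the required $(b)$ and $(f)$ balls.

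The substantive case is when $B_1$ is a $(c)$-ball. Apply Proposition \ref{p:c_ball_covering} to produce a $(k,\delta,\epsilon,\nu)$-neck region $\cN = B_2 \setminus \overline B_{r_x}(\cC)$, together with a collection of center balls $\{B_{r_x}(x)\}_{x \in \cC_+}$, each of type $(d)$, $(e)$, or $(s)$. Post-process these: each $(d)$-type center ball is broken via Proposition \ref{p:d_ball_covering} into $(b)$ and $(f)$ sub-balls of total content at most $C(n,\epsilon) r_x^k$ and $C(n)\epsilon\, r_x^k$ respectively; each $(e)$-type center ball is broken via Proposition \ref{p:e_ball_covering} into $(s)$-balls of total content at most $C(n)r_x^k$; the $(s)$-type centers are kept as given by Proposition \ref{p:c_ball_covering}. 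Union-ing the leftover $(b), (s), (f)$ balls over all centers with the neck region and $\cC_0$ yields the global covering.

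For the content estimates we invoke the Neck Structure Theorem \ref{t:neck_structure}, which produces the packing bound
\begin{equation*}
\cH^k(\cC_0) + \sum_{x \in \cC_+} r_x^k \leq C(n).
\end{equation*}
Combining with the per-sub-covering bounds then gives $\cH^k(\cC_0) + \sum_b r_b^k + \sum_s r_s^k \leq C(n,\epsilon)$ and the small bound $\sum_f r_f^k \leq C(n)\epsilon \sum_{\cC_+} r_x^k \leq C(n)\epsilon$. The distortion-drop clause $D_k(y,2r_s) < \Gamma - \delta^6$ for $(s)$-balls follows since $\sup_{B_2} D_k(\cdot, 4) \leq \Gamma$ by hypothesis, combined with the sharper relative drop $\sup_{B_{2r_c}(x_c)} D_k - \delta^6$ provided by Propositions \ref{p:c_ball_covering} and \ref{p:e_ball_covering}.

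The main obstacle is verifying that the $(f)$-ball content retains the $\epsilon$-smallness after aggregating across all $(d)$-centers inside the neck region. This $\epsilon$-factor is the critical ingredient: it is exactly what allows the full Neck Decomposition Theorem \ref{t:neck_decomposition} (obtained by iterating this proposition) to terminate with convergent content sums, since at each iteration the $(f)$-balls must be reprocessed and only geometric decay in their total content will close up the induction. The factor itself traces back to the $(k{-}1)$-dimensional concentration of collapsed points near a subspace $L^{k-1}$ built into the proof of Proposition \ref{p:d_ball_covering}; once this is in hand, everything else is a bookkeeping stacking of the three covering propositions on top of the Neck Structure Theorem's packing bound on $\cC$.
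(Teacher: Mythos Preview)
Your proposal is correct and follows essentially the same approach as the paper's proof: classify $B_1$ by ball type, handle the trivial $(b)$, $(d)$, $(e)$ cases directly via Propositions \ref{p:d_ball_covering} and \ref{p:e_ball_covering}, and in the substantive $(c)$-case apply Proposition \ref{p:c_ball_covering} to build the neck region, then post-process the resulting $(d)$ and $(e)$ center balls into $(b)$, $(f)$ and $(s)$ balls, using the Neck Structure Theorem \ref{t:neck_structure} packing bound $\cH^k(\cC_0)+\sum_{\cC_+} r_x^k \leq C(n)$ to sum the content estimates. Your final commentary on why the $C(n)\epsilon$ factor on the $(f)$-balls is the crucial output is also exactly the point the paper makes in the remark following the proposition.
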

\begin{remark}
The Neck Decomposition will follow by repeated applications of this Proposition.  The balls $B_{r_s}(x_s)$ have a drop in the distortion, and thus we can handle them later by an induction argument.  The balls $B_{r_f}(x_f)$ we know nothing about, however this is a set with small $k$-content.  Therefore in Inductive Step 2 we will start over on them and all errors become a geometric series which converge.
\end{remark}

\begin{proof}
	We begin with the ball $B_1$ and observe that it is either a $(c)$, $(d)$, $(e)$ or $(s)$ ball, as in Subsection \ref{ss:neck_decomp:notation}.  If $B_1$ is a $(d)$, $(e)$ or $(s)$ ball then the Proposition is immediately proved simply by applying either Proposition \ref{p:d_ball_covering} or Proposition \ref{p:e_ball_covering}.  Thus we will assume $B_1$ is a $(c)$-ball.
	
	Now we can apply Proposition \ref{p:c_ball_covering} in order to build a Neck Region $\cN = B_2\setminus \overline B_{r_x}(\cC)$ such that each ball $B_{r_x}(x)$ with $r_x>0$ is either a $(d)$, $(e)$ or $(s)$-ball.  This gives us the covering
\begin{align}
B_1\subseteq \cN\cup\cC_0\cup\bigcup_d B_{r_d}(x_d)\cup \bigcup_e B_{r_e}(x_e)\cup\bigcup_s B_{r_s}(x_s)\, . 	
\end{align}
By applying the Neck Structure Theorem \ref{t:neck_structure} we get the estimates
\begin{align}
	\cH^k(\cC_0)+\sum_d r_d^k+\sum_e r_e^k+\sum_s r_s^k\leq C(n)\, .
\end{align}
Now we wish to remove the $(d)$ and $(e)$ balls from this covering and control what is left.  If we apply Proposition \ref{p:e_ball_covering} we can cover each $e$-ball $B_{r_e}(x_e)\subseteq \bigcup_s B_{r_{es}}(x_{es})$ such that $\sum_s r^k_{es}\leq C(n) r^k_e$.  In particular the new collection of $s$ balls satisfies the estimate
\begin{align}
\sum r_{es}^k+\sum r_s^k \leq C(n)+C(n)\sum_e r_e^k\leq C(n)\, .	
\end{align}
Combining these $(s)$-balls together together gives the covering
\begin{align}
B_1\subseteq \cN\cup\cC_0\cup\bigcup_d B_{r_d}(x_d)\cup\bigcup_s B_{r_s}(x_s)\, ,
\end{align}
with the estimates
\begin{align}
	\cH^k(\cC_0)+\sum_d r_d^k+\sum_s r_s^k\leq C(n)\, .
\end{align}

We can now apply Proposition \ref{p:d_ball_covering} in order to cover each $d$-ball $B_{r_d}(x_d)\subseteq \bigcup_b B_{r_{db}}(x_{db})\cup\bigcup_f B_{r_{df}}(x_{df})$ such that $\sum_b r^k_{eb}\leq C(n,\epsilon) r^k_d$ and $\sum_s r^k_{df}\leq C(n)\epsilon\, r^k_d$.  Combining all of these together we get the covering
\begin{align}
B_1\subseteq \cN\cup\cC_0\cup\bigcup_b B_{r_b}(x_b)\cup\bigcup_s B_{r_s}(x_s)\cup\bigcup_f B_{r_f}(x_f)\, ,	
\end{align}
where $\sum_b r_b^k+\sum_s r_s^k\leq C(n,\epsilon)$ and $\sum_f r_f^k\leq C(n)\epsilon\sum r_d^k\leq C(n)\epsilon$ as claimed.
\end{proof}

\vspace{.5cm}

Our next step in our inductive proof to get rid of the $(f)$-balls in the covering.  As there is only a small content of $(f)$-balls, the trick is to count the errors which appear and see that form a geometric series.  As not everything goes away in the limit, we may be left with a $\cH^k$ null set:

\begin{proposition}[Induction Step 2.]\label{p:inductive2}
	Let $\mu$ be a Borel measure and assume for each $x\in B_2$ that $\int_0^4 \beta_k(x,r)^2\frac{dr}{r}\leq \Gamma$ .  Then for each $\nu ,\epsilon,\delta>0$ with $\epsilon<\epsilon(n,\nu)$ and $\delta<\delta(n,\nu,\epsilon)$ $\exists$ a covering $$B_1 \subseteq \bigcup_a \big(\cN_a\cap B_{r_a}\big) \cup\bigcup_a(\cC_{0,a}\cap B_{r_a}\big)\cup\cS^- \cup \bigcup_b B_{r_b}(x_b)\cup \bigcup_s B_{r_s}(x_s)$$ 
such that
\begin{enumerate}
\item 	$\cN_a = B_{2r_a}\setminus \overline B_{r_{x}}(\cC_a)$ are $(k,\delta,\epsilon,\nu)$-neck regions.  In particular, $\mu(\cN_a)\leq C(n)\delta r_a^k$ and $\cC_{0,a}$ are $k$-rectifiable by Theorem \ref{t:neck_structure}.
\item $B_{r_b}(x_b)$ satisfy the measure constraints $\mu(B_{2r_b})<\nu \, r_b^k$ .
\item For each $y\in B_{r_s}(x_s)$ we have $D_k(y,2r_s)<\Gamma-\delta^6$.
\item We have the measure estimate $\cH^k(\cS^-)=0$.
\item We have the content estimates $\cH^k(\cC_0)+\sum r_a^k + \sum r_b^k + \sum r_s^k < C(n,\epsilon)$,
\end{enumerate}	
\end{proposition}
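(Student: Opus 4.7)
The plan is to iterate Proposition \ref{p:inductive1} on the $f$-balls that appear at each generation, then identify $\cS^{-}$ as the set of points that lie inside an $f$-ball at every stage and use the geometric decay of the total $f$-ball content to conclude that $\cH^{k}(\cS^{-})=0$. The $a$-balls, $b$-balls, and $s$-balls produced across all generations accumulate into the desired decomposition, and their total content is controlled by a convergent geometric series provided $\epsilon$ is chosen so that $C(n)\epsilon<1/2$.

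First I will apply Proposition \ref{p:inductive1} directly on $B_{1}$, which produces a first-generation neck region $\cN_{1}$ with rectifiable piece $\cC_{0,1}$, a collection $\{B_{r_{b}}(x_{b})\}_{F_{1}^{b}}$ of $b$-balls, a collection $\{B_{r_{s}}(x_{s})\}_{F_{1}^{s}}$ of $s$-balls with the distortion drop $D_{k}(y,2r_{s})<\Gamma-\delta^{6}$, and a residual collection $\{B_{r_{f}}(x_{f})\}_{F_{1}^{f}}$ with $\sum_{F_{1}^{f}} r_{f}^{k}\le C(n)\epsilon$. Rescaling each first-generation $f$-ball $B_{r_{f}}(x_{f})$ to unit size and re-applying Proposition \ref{p:inductive1} (using that the pointwise distortion bound $\int_{0}^{2}\beta_{k}^{2}\,ds/s\le \Gamma$ is preserved under rescaling and restriction) produces, descended from that single parent, a neck region at scale $r_{f}$, further $b$- and $s$-balls, and new $f$-balls whose $k$-content is bounded by $C(n)\epsilon\,r_{f}^{k}$. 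Summing over all first-generation parents, the total second-generation $f$-ball content is at most $(C(n)\epsilon)^{2}$.

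Iterating $N$ times produces a decomposition of the form
\begin{equation*}
B_{1}\subseteq \bigcup_{a\in A_{N}}\bigl(\cN_{a}\cup \cC_{0,a}\bigr)\cap B_{r_{a}}(x_{a}) \,\cup\, \bigcup_{b\in B_{N}} B_{r_{b}}(x_{b})\,\cup\,\bigcup_{s\in S_{N}} B_{r_{s}}(x_{s})\,\cup\,\bigcup_{f\in F_{N}} B_{r_{f}}(x_{f}),
\end{equation*}
with the residual $f$-content bounded by $\sum_{F_{N}} r_{f}^{k}\le (C(n)\epsilon)^{N}$ and the accumulated content estimate
\begin{equation*}
\cH^{k}\!\bigl(\textstyle\bigcup_{A_{N}}\cC_{0,a}\bigr)+\sum_{A_{N}}r_{a}^{k}+\sum_{B_{N}}r_{b}^{k}+\sum_{S_{N}}r_{s}^{k}\le C(n,\epsilon)\sum_{j=0}^{N-1}\bigl(C(n)\epsilon\bigr)^{j}\le C(n,\epsilon),
\end{equation*}
which stabilizes as $N\to\infty$. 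Defining $\cS^{-}\equiv \bigcap_{N}\bigcup_{f\in F_{N}} B_{r_{f}}(x_{f})$, which by construction is the set of points that lie in an $f$-ball at every stage, and observing that the scale of the $f$-balls at generation $N$ is bounded above by the product of the parent scales and hence tends to zero, I obtain $\cH^{k}_{r_{N}}(\cS^{-})\le (C(n)\epsilon)^{N}\to 0$ and therefore $\cH^{k}(\cS^{-})=0$. Collecting all neck regions, $b$-balls, and $s$-balls across generations yields the claimed covering.

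The main obstacle is bookkeeping. One has to verify that the hypotheses of Proposition \ref{p:inductive1} (namely the pointwise $\beta$-integral bound with constant $\Gamma$) are inherited by each $f$-sub-ball so that the proposition can legitimately be re-applied with the same constants, and that both the $s$-ball distortion drop condition and the neck region conditions $(n1)$--$(n3)$ survive the rescaling/relocalization. These are essentially automatic because $\beta_{k}$ is scale-invariantly defined and the residual $f$-ball property only restricts what happens inside the ball, but writing it carefully forces attention to the precise scale at which each quantity is measured. The only quantitative constraint is choosing $\epsilon<\epsilon(n)$ small enough that $C(n)\epsilon<1/2$, which is exactly the hypothesis $\epsilon<\epsilon(n,\nu)$ in the statement.
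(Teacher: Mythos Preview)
Your proposal is correct and follows essentially the same approach as the paper: iterate Proposition~\ref{p:inductive1} on the $f$-balls, accumulate the $a$-, $b$-, and $s$-balls over all generations with content controlled by the geometric series $\sum_{j\ge 0}(C(n)\epsilon)^j$, and define $\cS^{-}$ as the intersection (equivalently, Hausdorff limit) of the nested $f$-ball unions, whose $k$-content decays like $(C(n)\epsilon)^N$. The only cosmetic differences are that the paper takes a Hausdorff limit rather than an explicit intersection, and it deduces $\cH^k(\cS^-)=0$ directly from the content bound $\sum_{F_N}r_f^k\to 0$ (which already forces the individual radii to zero) without invoking your ``product of parent scales'' observation.
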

\begin{proof}
To get from Proposition \ref{p:inductive1} to Proposition \ref{p:inductive2} we will simply continually recover the $f$-balls.  Indeed, let us begin by applying Proposition \ref{p:inductive1} to $B_1$ to get the covering
\begin{align}
B_1 \subseteq \cN\cup\cC_0\cup \bigcup_b B_{r^0_b}(x_b)\cup \bigcup_s B_{r^0_s}(x^0_s)\cup \bigcup_f B_{r^0_f}(x^0_f)\, ,	
\end{align}
where $\cH^k(\cC_0)+ \sum (r_b^0)^k + \sum (r_s^0)^k < C(n,\epsilon)$ and $\sum (r_f^0)^k\leq C(n)\epsilon$.  Let us now apply Proposition \ref{p:inductive1} to each $f$-ball $B_{r^0_f}(x^0_f)$ in order to get the new covering
\begin{align}
B_1 \subseteq \bigcup_a \big(\cN_a\cap B_{r_a}(x^1_a)\big) \cup\bigcup_a(\cC_{0,a}\cap B_{r^1_a(x^1_a)}\big)\cup \bigcup_b B_{r^1_b}(x^1_b)\cup \bigcup_s B_{r^1_s}(x^1_s)\cup \bigcup_f B_{r^1_f}(x^1_f)\, ,
\end{align}
with
\begin{align}
&\sum_a (r^1_a)^k+\sum_b (r^1_b)^k+\sum_s (r^1_s)^k\leq C(n,\epsilon)+C(n,\epsilon)\sum_f (r_f^0)^k\leq C(n,\epsilon)(1+C(n)\epsilon)\, ,\notag\\
&\sum_f (r^1_f)^k\leq \Big(C(n)\epsilon\Big)^2\, .	
\end{align}
Now we can again apply Proposition \ref{p:inductive1} to each $f$-ball $B_{r^1_f}(x^1_f)$.  Indeed, if we continue this $N$-times then we get the covering
\begin{align}
B_1 \subseteq \bigcup_a \big(\cN_a\cap B_{r^N_a}(x^N_a)\big) \cup\bigcup_a(\cC_{0,a}\cap B_{r^N_a(x^N_a)}\big)\cup \bigcup_b B_{r^N_b}(x^N_b)\cup \bigcup_s B_{r^N_s}(x^N_s)\cup \bigcup_f B_{r^N_f}(x^N_f)\, ,
\end{align}
with
\begin{align}
&\sum_a (r^N_a)^k+\sum_b (r^N_b)^k+\sum_s (r^N_s)^k\leq C(n,\epsilon)\sum_{j=0}^N\big(C(n)\epsilon\big)^j\, ,\notag\\
&\sum_f (r^N_f)^k\leq \Big(C(n)\epsilon\Big)^N\, .	
\end{align}

Now for each $N<M$ let us observe by construction that
\begin{align}
&\{B_{r^N_a}(x^N_a)\}\subseteq \{B_{r^M_a}(x^M_a)\}\, ,\;\;	\{B_{r^N_b}(x^N_b)\}\subseteq \{B_{r^M_b}(x^M_b)\}\, ,\;\; \{B_{r^N_s}(x^N_s)\}\subseteq \{B_{r^M_s}(x^M_s)\}\, ,\notag\\
&\bigcup B_{r^M_f}(x^M_f)\subseteq \bigcup B_{2r^N_f}(x^N_f)\, .
\end{align}
In particular, we can limit the $(a)$,$(b)$ and $(s)$ covers and Hausdorff limit $\cS^-\equiv \lim \{B_{r^N_f}(x^N_f)\}$ to get the covering
\begin{align}
B_1 \subseteq \bigcup_a \big(\cN_a\cap B_{r_a}(x_a)\big) \cup\bigcup_a(\cC_{0,a}\cap B_{r^N_a(x_a)}\big)\cup \bigcup_b B_{r_b}(x_b)\cup \bigcup_s B_{r_s}(x_s)\cup \cS^-\, ,	
\end{align}
with the estimates
\begin{align}
&\sum_a r_a^k+\sum_b r_b^k+\sum_s r_s^k\leq C(n,\epsilon)\sum_{j=0}^\infty\big(C(n)\epsilon\big)^j\leq  C(n,\epsilon)\, ,\notag\\
&\cH^k(\cS^-)=0\, .		
\end{align}
The last estimate follows because for each $N$ we have $\cS^-\subseteq \bigcup_f B_{2r^N_f}(x^N_f)$ with $\sum r_f^N<(C(n)\epsilon)^N\to 0$ if $\epsilon\leq \epsilon(n)$.
\end{proof}

\vspace{.5cm}

Let us now finish the proof of the Neck Decomposition:

\begin{proof}[Proof of the Neck Decomposition Theorem \ref{t:neck_decomposition}]
Our goal then is to remove the $s$-balls from Proposition \ref{p:inductive2} by iteratively applying the Proposition \ref{p:inductive2} some finite number of times.  To begin, if $\Gamma<\delta^6$ then Theorem \ref{t:neck_decomposition} follows immediately from Proposition \ref{p:inductive2}, as in this case there cannot be any $s$-balls.  Now assume we have proved Theorem \ref{t:neck_decomposition} for $\Gamma'>0$, let us now prove it holds for $\Gamma=\Gamma'+\delta^6$.  So apply Proposition \ref{p:inductive2} for $\Gamma=\Gamma'+\delta^6$ in order to get the covering
\begin{align}
	B_1 \subseteq \bigcup_a \big(\cN_a\cap B_{r_a}\big) \cup\bigcup_a(\cC_{0,a}\cap B_{r_a}\big)\cup \bigcup_b B_{r_b}(x_b)\cup\cS^- \cup \bigcup_s B_{r_s}(x_s)\, ,
\end{align}
where for each $B_{r_s}(x_s)$ we now have that $D_k(y,2r_s)<\Gamma'$.  In particular, by our inductive hypothesis we can now apply Theorem \ref{t:neck_decomposition} to these balls in order to then conclude Theorem \ref{t:neck_decomposition} for $\Gamma=\Gamma'+\delta^6$, as desired.  As the amount we increased was some definite $\delta^6$ independent of $\Gamma'$, we can for any $\Gamma>0$ simply apply this procedure $\delta^{-6}\Gamma$ times in order to then conclude Theorem \ref{t:neck_decomposition} holds for all $\Gamma$.
\end{proof}

\bibliographystyle{aomalpha}
\bibliography{Naber_ParkCity}

\providecommand{\bysame}{\leavevmode\hbox to3em{\hrulefill}\thinspace}
\providecommand{\noopsort}[1]{}
\providecommand{\mr}[1]{\href{http://www.ams.org/mathscinet-getitem?mr=#1}{MR~#1}}
\providecommand{\zbl}[1]{\href{http://www.zentralblatt-math.org/zmath/en/search/?q=an:#1}{Zbl~#1}}
\providecommand{\jfm}[1]{\href{http://www.emis.de/cgi-bin/JFM-item?#1}{JFM~#1}}
\providecommand{\arxiv}[1]{\href{http://www.arxiv.org/abs/#1}{arXiv~#1}}
\providecommand{\doi}[1]{\url{https://doi.org/#1}}
\providecommand{\MR}{\relax\ifhmode\unskip\space\fi MR }
\providecommand{\MRhref}[2]{%
  \href{http://www.ams.org/mathscinet-getitem?mr=#1}{#2}
}
\providecommand{\href}[2]{#2}
\begin{thebibliography}{DDPT08}

\bibitem[AT15]{azzam-tolsa}
\bgroup\scshape{}J.~Azzam\egroup{} and \bgroup\scshape{}X.~Tolsa\egroup{},
  Characterization of {$n$}-rectifiability in terms of {J}ones' square
  function: {P}art {II},  \emph{Geom. Funct. Anal.} \textbf{25} no.~5 (2015),
  1371--1412. \mr{3426057}.  \zbl{06521333}.  \doi{10.1007/s00039-015-0334-7}.

\bibitem[BET17]{BaEnTo_HarmZero}
\bgroup\scshape{}M.~Badger\egroup{}, \bgroup\scshape{}M.~Engelstein\egroup{},
  and \bgroup\scshape{}T.~Toro\egroup{}, Structure of sets which are well
  approximated by zero sets of harmonic polynomials,  \emph{Anal. PDE}
  \textbf{10} no.~6 (2017), 1455--1495. \mr{3678494}.
  \doi{10.2140/apde.2017.10.1455}.

\bibitem[BL15]{BaLe_LocalReif}
\bgroup\scshape{}M.~Badger\egroup{} and \bgroup\scshape{}S.~Lewis\egroup{},
  Local set approximation: {M}attila-{V}uorinen type sets, {R}eifenberg type
  sets, and tangent sets,  \emph{Forum Math. Sigma} \textbf{3} (2015), e24, 63.
  \mr{3482273}.  \doi{10.1017/fms.2015.26}.

\bibitem[CC97]{ChCo_I}
\bgroup\scshape{}J.~Cheeger\egroup{} and \bgroup\scshape{}T.~H.
  Colding\egroup{}, On the structure of spaces with {R}icci curvature bounded
  below. {I},  \emph{J. Differential Geom.} \textbf{46} no.~3 (1997), 406--480.
  \mr{1484888}.  Available at
  \url{http://projecteuclid.org/euclid.jdg/1214459974}.

\bibitem[CJN]{ChJiNa}
\bgroup\scshape{}J.~Cheeger\egroup{}, \bgroup\scshape{}W.~Jiang\egroup{}, and
  \bgroup\scshape{}A.~Naber\egroup{}, Rectifiability of singular sets in
  noncollapsed spaces with ricci curvature bounded below,  \emph{preprint}.
  Available at \url{https://arxiv.org/abs/1805.07988}.

\bibitem[DS91]{Semmes_UniformRect}
\bgroup\scshape{}G.~David\egroup{} and \bgroup\scshape{}S.~Semmes\egroup{},
  Singular integrals and rectifiable sets in {${\bf R}^n$}: {B}eyond
  {L}ipschitz graphs,  \emph{Ast\'{e}risque} no.~193 (1991), 152. \mr{1113517}.

\bibitem[DT99]{DT_snowballs}
\bgroup\scshape{}G.~David\egroup{} and \bgroup\scshape{}T.~Toro\egroup{},
  Reifenberg flat metric spaces, snowballs, and embeddings,  \emph{Math. Ann.}
  \textbf{315} no.~4 (1999), 641--710. \mr{1731465}.
  \doi{10.1007/s002080050332}.

\bibitem[DDPT08]{DPT_minimalcones}
\bgroup\scshape{}G.~David\egroup{}, \bgroup\scshape{}T.~De~Pauw\egroup{}, and
  \bgroup\scshape{}T.~Toro\egroup{}, A generalization of {R}eifenberg's theorem
  in {$\Bbb R^3$},  \emph{Geom. Funct. Anal.} \textbf{18} no.~4 (2008),
  1168--1235. \mr{2465688}.  \doi{10.1007/s00039-008-0681-8}.

\bibitem[DS93]{david-semmes}
\bgroup\scshape{}G.~David\egroup{} and \bgroup\scshape{}S.~Semmes\egroup{},
  \emph{Analysis of and on uniformly rectifiable sets}, \emph{Mathematical
  Surveys and Monographs} \textbf{38}, American Mathematical Society,
  Providence, RI, 1993. \mr{1251061}.  \zbl{0832.42008}.
  \doi{10.1090/surv/038}.

\bibitem[DT12]{davidtoro}
\bgroup\scshape{}G.~David\egroup{} and \bgroup\scshape{}T.~Toro\egroup{},
  Reifenberg parameterizations for sets with holes,  \emph{Mem. Amer. Math.
  Soc.} \textbf{215} no.~1012 (2012), vi+102. \mr{2907827}.  \zbl{1236.28002}.
  \doi{10.1090/S0065-9266-2011-00629-5}.

\bibitem[ENVa]{ENV_Hilbert}
\bgroup\scshape{}N.~S. Edelen\egroup{}, \bgroup\scshape{}A.~Naber\egroup{}, and
  \bgroup\scshape{}D.~Valtorta\egroup{}, Effective reifenberg theorems in
  hilbert and banach spaces. Available at
  \url{https://arxiv.org/abs/1806.01250}.

\bibitem[ENVb]{ENV}
\bgroup\scshape{}N.~S. Edelen\egroup{}, \bgroup\scshape{}A.~Naber\egroup{}, and
  \bgroup\scshape{}D.~Valtorta\egroup{}, Quantitative {R}eifenberg theorem for
  measures, arXiv:1612.08052. Available at
  \url{https://arxiv.org/abs/1612.08052}.

\bibitem[Fed69a]{Federer_Book}
\bgroup\scshape{}H.~Federer\egroup{}, \emph{Geometric measure theory},
  \emph{Die Grundlehren der mathematischen Wissenschaften, Band 153},
  Springer-Verlag New York Inc., New York, 1969. \mr{0257325}.

\bibitem[Fed69b]{Fed}
\bgroup\scshape{}H.~Federer\egroup{}, \emph{Geometric measure theory},
  \emph{Die Grundlehren der mathematischen Wissenschaften, Band 153},
  Springer-Verlag New York Inc., New York, 1969. \mr{0257325}.
  \zbl{0176.00801}.

\bibitem[JN]{JiNa_L2}
\bgroup\scshape{}W.~Jiang\egroup{} and \bgroup\scshape{}A.~Naber\egroup{},
  $l^2$ curvature bounds on manifolds with bounded ricci curvature,
  \emph{preprint}. Available at \url{https://arxiv.org/abs/1605.05583}.

\bibitem[Jon90]{jones}
\bgroup\scshape{}P.~W. Jones\egroup{}, Rectifiable sets and the traveling
  salesman problem,  \emph{Invent. Math.} \textbf{102} no.~1 (1990), 1--15.
  \mr{1069238}.  \zbl{0731.30018}.  \doi{10.1007/BF01233418}.

\bibitem[Mat95]{mattila}
\bgroup\scshape{}P.~Mattila\egroup{}, \emph{Geometry of sets and measures in
  {E}uclidean spaces}, \emph{Cambridge Studies in Advanced Mathematics}
  \textbf{44}, Cambridge University Press, Cambridge, 1995, Fractals and
  rectifiability. \mr{1333890}.  \zbl{0819.28004}.
  \doi{10.1017/CBO9780511623813}.

\bibitem[Nab]{Na_HarmBook}
\bgroup\scshape{}A.~Naber\egroup{}, Regularity theory for stationary and
  minimizing harmonic maps,  \emph{preprint}. Available at
  \url{https://arxiv.org/abs/1610.02898}.

\bibitem[NV]{NaVa_EnId}
\bgroup\scshape{}A.~Naber\egroup{} and \bgroup\scshape{}D.~Valtorta\egroup{},
  Energy identity for stationary yang mills,  \emph{preprint}. Available at
  \url{https://arxiv.org/abs/1610.02898}.

\bibitem[NV17]{naber-valtorta:harmonic}
\bgroup\scshape{}A.~Naber\egroup{} and \bgroup\scshape{}D.~Valtorta\egroup{},
  Rectifiable-{R}eifenberg and the regularity of stationary and minimizing
  harmonic maps,  \emph{Ann. of Math. (2)} \textbf{185} no.~1 (2017), 131--227.
  \mr{3583353}.  \zbl{06686585}.  \doi{10.4007/annals.2017.185.1.3}.

\bibitem[Rei60]{reif_orig}
\bgroup\scshape{}E.~R. Reifenberg\egroup{}, Solution of the {P}lateau {P}roblem
  for {$m$}-dimensional surfaces of varying topological type,  \emph{Acta
  Math.} \textbf{104} (1960), 1--92. \mr{0114145}.  \zbl{0099.08503}.
  Available at \url{http://link.springer.com/article/10.1007%2FBF02547186}.

\bibitem[Tol15a]{tolsa_char}
\bgroup\scshape{}X.~Tolsa\egroup{}, Characterization of {$n$}-rectifiability in
  terms of {J}ones' square function: part {I},  \emph{Calc. Var. Partial
  Differential Equations} \textbf{54} no.~4 (2015), 3643--3665. \mr{3426090}.
  \doi{10.1007/s00526-015-0917-z}.

\bibitem[Tol15b]{tolsa:jones-rect}
\bgroup\scshape{}X.~Tolsa\egroup{}, Characterization of {$n$}-rectifiability in
  terms of {J}ones' square function: part {I},  \emph{Calc. Var. Partial
  Differential Equations} \textbf{54} no.~4 (2015), 3643--3665. \mr{3426090}.
  \zbl{06544048}.  \doi{10.1007/s00526-015-0917-z}.

\bibitem[Tor95]{toro:reifenberg}
\bgroup\scshape{}T.~Toro\egroup{}, Geometric conditions and existence of
  bi-{L}ipschitz parameterizations,  \emph{Duke Math. J.} \textbf{77} no.~1
  (1995), 193--227. \mr{1317632}.  \zbl{0847.42011}.
  \doi{10.1215/S0012-7094-95-07708-4}.

\end{thebibliography}

\end{document}